\title[Lefschetz theorem for effective cone]{Lefschetz-type theorems for the effective cone on Hyperkähler varieties}
\author[Jonas Baltes]{Jonas Baltes}
\address{Georg-August-Universität Göttingen, Mathematisches Institut, Bunsenstraße 3-5, 37073 Göttingen, Germany}
\email{jonas.baltes@mathematik.uni-goettingen.de}
\urladdr{https://sites.google.com/view/jonasbaltes}
\date{\today}
\DeclareMathSymbol{\shortminus}{\mathbin}{AMSa}{"39}
\newcommand{\stbdl}{\fF}
\begin{document}
\begin{abstract}
    In this paper we show some Lefschetz-type theorems for the effective cone of Hyperkähler varieties. In particular we are able to show that the inclusion of any smooth ample divisor induces an isomorphism of effective cones. Moreover we deduce a similar statement for some effective exceptional divisors, which yields the computation of the effective cone of e.g.\ projectivized cotangent bundles or some projectivized Lazarsfeld--Mukai bundles.
\end{abstract}
\maketitle
\section{Introduction}
The well known Lefschetz-hyperplane theorem states that given any ample smooth divisor $i\colon D\hookrightarrow X$ in a smooth projective variety $X$ with $\dim X \ge 4$ the restriction induces an isomorphism on the singular cohomology and the Picard group
\begin{align*}
    H^2(X, \ZZ) &\to H^2(D, \ZZ) &\Pic X &\to \Pic D.
\end{align*}

There has been some effort to examine whether similar statements also hold for more refined invariants, e.g. the effective, movable or ample cone. By the work of Hassett--Lin--Wang \cite{hassett2002weak} it turns out that a general statement cannot be valid for the nef cone. Counter examples arise already for blow ups of $\PP^4$. More recently Ottem \cite[Theorem 1.1 ii), iii)]{ottemBirationalGeometry} computed the nef, movable and effective cone of a general hypersurface $X$ of bidegree $(n,e)$ in $\PP^1\times\PP^n$ for $n\ge 3$ and it turns out all are equal and no Lefschetz-type result holds for any of these three cones, in particular
\begin{equation*}
    \Eff(X) \not\cong \Eff(\PP^1\times\PP^n)
\end{equation*}
via the inclusion.

Besides these counterexamples Hassett--Lin--Wang with the work of Kollár could prove a dual statement for the cone of curves, looking only at the $K_X$-negative part:
\begin{equation*}
    i_*\overline{NE}(D)_{K_D\le 0} \cong \overline{NE}(X)_{K_X+D\le 0}.
\end{equation*}
This solves the problem e.g. for Fano varieties but as soon as the canonical bundle is nef this does not yield any results. 

In this paper we will analyse a class of varieties, where the canonical class is indeed nef, namely we will study the \emph{effective cone} of Hyperkähler varieties, which is a class of varieties where $K_X = 0$ is trivial. We start with some background on Hyperkähler varieties in \Cref{sec:VanishingThms}.

In \Cref{sec:EffConeAmple} we prove a Lefschetz-type theorem for the effective cones of smooth complete intersections in the case that the birational geometry of such spaces is limited. It arises as a simple consequence of a theorem of Verbitsky:
\begin{theorem}
\label{mainthm1}
    Let $X$ be a Hyperkähler variety of dimension $2n\ge 4$ and $Y\subset X$ a smooth complete intersection of ample divisors of codimension $\codim_X Y = c < n$. Suppose furthermore that any smooth birational $K$-trivial model $X'$ of $X$ is isomorphic to $X$ in codimension $c+1$. Then the restriction morphism $\Pic(X)\to \Pic(Y)$ induces a bijection
    \begin{equation*}
        \Eff(X)\cong \Eff(Y).
    \end{equation*}
\end{theorem}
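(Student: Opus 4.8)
The plan is to use the Lefschetz hyperplane theorem to reduce the statement to a comparison of two cones inside one and the same vector space, and then to feed in Verbitsky's description of the positive and movable cones of a Hyperkähler variety together with the codimension hypothesis. Since $\dim Y = 2n - c \ge n+1 \ge 3$, applying the Grothendieck--Lefschetz theorem once for each of the $c$ ample hypersurfaces cutting out $Y$ shows that restriction along the inclusion $\iota\colon Y \hookrightarrow X$ induces isomorphisms $\Pic X \xrightarrow{\ \sim\ } \Pic Y$ and $N^1(X)_{\mathbb R} \xrightarrow{\ \sim\ } N^1(Y)_{\mathbb R}$. I identify these spaces via $r := \iota^{\ast}$ and transport the Beauville--Bogomolov--Fujiki form $q$, hence also the positive cone $\mathcal C_X$, to $N^1(Y)_{\mathbb R}$; reading $\Eff$ as the closed (pseudo-)effective cone, the claim becomes $r(\overline{\Eff}(X)) = \overline{\Eff}(Y)$. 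The structural input --- this is where Verbitsky's theorem enters, via the global Torelli theorem and the cone decomposition it yields, together with the divisorial Zariski decomposition --- is that $\overline{\Eff}(X)$ is the closed convex cone spanned by the movable cone $\overline{\mathrm{Mov}}(X) = \overline{\bigcup_{X'} \mathrm{Nef}(X')}$, where $X'$ ranges over the marked birational $K$-trivial models of $X$, together with the classes of the prime exceptional divisors $E \subset X$, each of which satisfies $q(E) < 0$.

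For the inclusion $r(\overline{\Eff}(X)) \subseteq \overline{\Eff}(Y)$ it is enough to treat these generators. If $E\subset X$ is prime exceptional, then, as $Y$ is cut out by ample divisors, intersecting $E$ with them one at a time produces a non-empty effective divisor $E\cap Y$ on $Y$ of the expected dimension whose class is $r([E])$ --- here one needs that $Y$ is not contained in $E$ --- so $r([E]) \in \overline{\Eff}(Y)$. If $\alpha'$ is nef on a marked birational model $f\colon X \dashrightarrow X'$, let $Y' \subset X'$ be the strict transform of $Y$; this is the step that uses the hypothesis. Writing $X \setminus U \cong X' \setminus U'$ for the indeterminacy loci, which have codimension $\ge c+2$, the crude estimate $\dim\!\big((X\setminus U)\cap Y\big) \le \dim(X\setminus U) \le 2n-c-2 = \dim Y - 2$ shows that $Y$ and $Y'$ are isomorphic outside closed subsets of codimension $\ge 2$; hence restriction identifies $N^1(Y)_{\mathbb R}$ with $N^1(Y')_{\mathbb R}$ and $\overline{\Eff}(Y)$ with $\overline{\Eff}(Y')$, and since $\alpha'|_{Y'}$ is nef it is pseudoeffective, so the corresponding class on $Y$ lies in $\overline{\Eff}(Y)$. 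Passing to closed convex hulls gives the inclusion.

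The reverse inclusion $\overline{\Eff}(Y) \subseteq r(\overline{\Eff}(X))$ is the heart of the matter. Let $D\subset Y$ be a prime divisor with class $\alpha \in N^1(X)$. If $q(\alpha) \ge 0$, then $D$ meets the moving complete-intersection curves of $Y$ non-negatively, and such a curve is $q$-dual to a class in the positive cone, so $q(\alpha, \beta) \ge 0$ for some $\beta \in \mathcal C_X$; combined with $q(\alpha) \ge 0$ this forces $\alpha \in \overline{\mathcal C}_X \subseteq \overline{\Eff}(X)$. If $q(\alpha) < 0$, one must show that $D$ comes from a prime exceptional divisor of $X$: the plan is to argue that such a $D$ is rigid and uniruled on $Y$, that its deformations inside $Y$ sweep out a divisor, and that this divisor --- being uniruled of negative square --- extends across the defining ample hypersurfaces to a prime exceptional divisor $E\subset X$ with $r([E])$ proportional to $\alpha$; alternatively, one compares the two cones face by face, using the deformation-invariance and the bounded Beauville--Bogomolov square of the MBM/wall classes along which their faces are cut out. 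I expect this last point --- promoting an effective class of negative square on the hypersurface $Y$ to an effective (prime exceptional) class on the ambient $X$, and dually ruling out that $Y$ could be contained in such a divisor --- to be the main obstacle, and the place where Verbitsky's theorem and the codimension hypothesis must be played off against each other; once the two cones are known to have the same extremal rays, the asserted bijection $\Eff(X) \cong \Eff(Y)$ follows.
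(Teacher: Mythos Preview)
Your treatment of the inclusion $r(\overline{\Eff}(X)) \subseteq \overline{\Eff}(Y)$ is essentially correct and close in spirit to the paper's argument, though the paper is a bit more careful with the exceptional divisors: rather than just observing $Y\not\subset E$, it actually shows via a vanishing argument that $H^0(X,\mathcal O(E)\otimes I_Y)=0$, so that the nonzero section of $\mathcal O(E)$ survives restriction.

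The reverse inclusion, however, is where your proposal has a genuine gap, and you acknowledge this yourself. Your strategy of splitting into $q(\alpha)\ge 0$ and $q(\alpha)<0$ and then, in the negative case, trying to promote a rigid uniruled divisor on $Y$ to a prime exceptional divisor on $X$ via deformation and sweeping is not how the paper proceeds, and I do not see how to make it work: there is no mechanism available that guarantees a divisor on $Y$ with negative BBF-square extends to a prime exceptional divisor on $X$, and the vague appeals to MBM classes or face-by-face comparison do not supply one.

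The paper's argument for this direction is entirely different and purely cohomological. One shows the contrapositive: if $D\in\Pic(X)$ is not $\mathbb Q$-effective, then $D|_Y$ is not $\mathbb Q$-effective either. Since $\overline{\Eff}(X)$ is dual to $\overline{\mathrm{Mov}}(X)$ under $q$, non-effectivity of $D$ means there is a movable $M$ with $q(M,D)<0$; by the hypothesis on birational models one passes to the model $X'$ on which $M$ becomes ample. Now the key point you are missing: because $Y$ is a complete intersection, its ideal sheaf is resolved by the Koszul complex $F_{c-1}\to\cdots\to F_0\to I_Y$, a resolution of length $c-1$ by sums of line bundles. Verbitsky's vanishing theorem (the version with a vector bundle twist) gives $H^j(X',D'^N\otimes F'_i)=0$ for $j\le c<n$ and $N\gg 0$; the codimension hypothesis lets one transport these vanishings back to $X$. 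Chasing the Koszul resolution then yields $H^1(X,D^N\otimes I_Y)=0$, whence $H^0(Y,D^N|_Y)\hookleftarrow H^0(X,D^N)=0$. This is where Verbitsky's theorem actually enters the proof --- not through global Torelli or cone decompositions, but as a concrete vanishing statement for line bundles outside the dual K\"ahler cone --- and it is precisely what converts the codimension bound $c<n$ into the desired vanishing.
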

In \Cref{sec:EffConeExc} we want to get rid of the assumption that the smooth subvariety is a complete intersection. 
However, an isomorphism as above cannot hold in the same way: Let $S$ be a K3 surface with an ample line bundle $H$ and let $\PP(\Omega_S) = E\subset S^{[2]}$ be the exceptional divisor of the Hilbert-Chow morphism $S^{[2]}\to \Sym^2 S$, where $S^{[2]}$ is the Hilbert scheme of two points. 
An elementary argument shows, that the natural line bundle $H^{[2]}\in \Pic S^{[2]}$ is big and nef, i.e. in the interior of the cone of exceptional divisors. 
On the other hand $H^{[2]}|_E$ is the pullback of $2H$ under the contraction morphism $E = \PP(\Omega_S) \to S$ and thus, $H^{[2]}|_E$ cannot be in the interior of $\Eff(E)$.\par
By considering the Beauville-Bogomolov-Fujiki-form $\bbf{-}{-}\colon \Pic(S^{[2]})\times \Pic(S^{[2]}) \to \ZZ$ we are able to prove an analogous statement though. If we denote by
\begin{equation*}
    \Eff(S^{[2]})_{E\ge 0} = \{D\in N^1(S^{[2]}) \,|\, \bbf{D}{E}\ge 0\},
\end{equation*} 
the following holds:
\begin{theorem}
    \label{mainthm2}
    Let $S$ be a K3 surface and $X = S^{[2]}$ the Hilbert-scheme. Suppose that $X$ does not admit any flops. Then the Hilbert-Chow exceptional divisor $E$ satisfies 
     \begin{equation*}
        \Eff(E) = \Eff(X)_{E\ge 0}|_E.
    \end{equation*}
\end{theorem}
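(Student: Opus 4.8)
The plan is to phrase the statement via the restriction isomorphism and then match the two cones ray by ray; throughout $\Eff$ denotes the closed (pseudo-)effective cone. Write $N^1(X) = N^1(S)\oplus\RR\delta$ with $[E] = 2\delta$, and $N^1(E) = \pi^*N^1(S)\oplus\RR\xi$, where $\pi\colon E = \PP(\Omega_S)\to S$ is the ruling and $\xi$ the relative tautological class, normalised so that $\delta|_E = -\xi$ (hence $[E]|_E = -2\xi$); from $S^{[2]} = \mathrm{Bl}_\Delta(S\times S)/(\ZZ/2)$ one also reads off $\alpha^{[2]}|_E = 2\pi^*\alpha$. Thus $i^*\colon N^1(X)_\RR\to N^1(E)_\RR$, $\alpha^{[2]}+t\delta\mapsto 2\pi^*\alpha - t\xi$, is injective between spaces of dimension $\rho(S)+1$, so it is an isomorphism, and the theorem becomes the cone identity $i^*\bigl(\Eff(X)\cap\{\bbf{\cdot}{E}\ge0\}\bigr) = \Eff(E)$ in $N^1(E)_\RR$. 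Since $\bbf{\alpha^{[2]}+t\delta}{E}$ has the sign of $-t$, the left-hand cone equals $i^*\{D\in\Eff(X): t(D)\le 0\}$; and since $(D|_E)\cdot f = -t(D)$ for a fibre $f$ of $\pi$, every class of $\Eff(E)$ automatically comes from the region $\{t\le 0\}$, so it suffices to match $i^*\{D\in\Eff(X): t(D)\le 0\}$ with $\Eff(E)$.

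For the inclusion $\subseteq$ I would use that $X$ has no flops, so that $\overline{\mathrm{Mov}}(X) = \overline{\mathrm{Nef}}(X)$, the positive part of the divisorial Zariski decomposition of any pseudoeffective class is nef, and $\Eff(X) = \overline{\mathrm{Nef}}(X) + \sum_j\RR_{\ge0}[\Gamma_j]$ with the $\Gamma_j$ the prime exceptional divisors, one of which is $E$. The extremal rays of $\Eff(X)\cap\{t\le 0\}$ are then: nef rays, which restrict to nef hence pseudoeffective classes on $E$; vertical rays $\alpha^{[2]}$ with $\alpha$ extremal in $\Eff(S)$, which restrict to $2\pi^*\alpha\in\pi^*\Eff(S)\subseteq\Eff(E)$; and prime exceptional $\Gamma_j\ne E$, which, being irreducible and distinct from $E$, meet $E$ properly and restrict to genuinely effective divisors. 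One only needs $t(\Gamma_j)\le 0$ for such $\Gamma_j$, which holds because $\psi^*\psi(\Gamma) = \Gamma + cE$ with $c\ge 0$ for the Hilbert--Chow contraction $\psi$, while $\psi^*$ has image $\{t=0\}$, so $t(\Gamma) = -2c\le 0$.

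The reverse inclusion $\Eff(E)\subseteq i^*\{D\in\Eff(X): t(D)\le 0\}$ is the heart of the matter, and here it is genuinely not enough that $\alpha$ be pseudoeffective on $S$ — this is precisely what the correction $\bbf{\cdot}{E}\ge0$ repairs, and it is why a naive cohomological lift via $0\to\mathcal{O}_X(mD-E)\to\mathcal{O}_X(mD)\to\mathcal{O}_E(mD|_E)\to 0$ and vanishing must fail, $E$ being contracted rather than ample. Instead I would dualise (pseudoeffective cone versus closed cone of moving curves, BDPP): $\Eff(E) = \overline{\mathrm{Mov}}_1(E)^\vee$, and reduce to the assertion $\overline{i_*\overline{\mathrm{Mov}}_1(E)} = \overline{\mathrm{Mov}}_1(X) + \RR_{\ge0}[f]$. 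A moving curve on $E$ is either a multiple of $f$ or lies over a nef (equivalently, $S$ being a K3 surface, moving) curve class $\ell$ on $S$, and its $\xi$-degree is governed by the degree of the maximal destabilising sub-line bundle of $\Omega_S|_C$ for $C$ a general curve of class $\ell$; this degree vanishes for sufficiently positive $\ell$ since $\Omega_S$ is slope-stable for every polarisation (Mehta--Ramanathan), but may be positive along special covering families of $S$. The crux, and the step I expect to be the main obstacle, is to show that these instability directions of $\Omega_S$ along curves are exactly accounted for on the $X$-side by the prime exceptional divisors $\Gamma_j\ne E$ together with the walls of $\overline{\mathrm{Nef}}(X)$ — equivalently, that each extremal ray of $\Eff(E)$ lifts to $\Eff(X)$. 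A ray in $\pi^*\Eff(S)$ lifts to $\alpha^{[2]}$, which is pseudoeffective on $X$ iff $\alpha$ is pseudoeffective on $S$ (one direction because $\alpha\mapsto\alpha^{[2]}$ preserves effectivity, the other because $\alpha^{[2]}$ descends from $S\times S$, where one restricts to a fibre of a projection); a relative ray $c\xi + 2\pi^*\alpha$ on $\partial\Eff(E)$ has candidate lift $\alpha^{[2]} - c\delta$, whose pseudoeffectivity on $X$ one should extract from the destabilising section of the relevant $\PP(\Omega_S|_C)\subset E$ by tracking it through the divisorial Zariski decomposition of $\alpha^{[2]} - \tfrac{c}{2}E$ and invoking the no-flops hypothesis — the delicate comparison being between the pseudoeffective threshold of $\alpha^{[2]}-sE$ on $X$ and that of $2\pi^*\alpha + c\xi$ on $E$.
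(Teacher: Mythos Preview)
Your treatment of the inclusion $\Eff(X)_{E\ge 0}|_E \subseteq \Eff(E)$ is correct and essentially what the paper does: use the Zariski decomposition, the hypothesis $\Mov(X)=\Nef(X)$, the fact that prime exceptional divisors $\Gamma_j\neq E$ restrict effectively to $E$, and handle the wall $\bbf{\cdot}{E}=0$ via pullbacks from $S$.

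The genuine gap is in the reverse inclusion. You explicitly dismiss the cohomological approach via
\[
0\to \oO_X(mD-E)\to \oO_X(mD)\to \oO_E(mD|_E)\to 0
\]
on the grounds that ``$E$ being contracted rather than ample'' makes the needed vanishing fail. This is exactly where the paper succeeds, and the point you are missing is Verbitsky's vanishing theorem (\Cref{thm:Verbitsky}): on a Hyperk\"ahler $2n$-fold, if $c_1(D)\notin\overline{\kK}^\vee$ then $H^i(X,D^{\otimes N}\otimes F)=0$ for every vector bundle $F$, all $i<n$, and $N\gg 0$. No positivity of $E$ is needed at all; one only needs an \emph{ample} class $M$ with $\bbf{M}{D}<0$. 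For $D\notin\Eff(X)$ the duality $\Mov(X)=\Eff(X)^\vee$ produces a movable such $M$, and the no-flops hypothesis $\Mov(X)=\Nef(X)$ lets one perturb it into the ample cone. Applying Verbitsky with $F=\oO_X(-E)$ and $F=\oO_X$ (here $n=2$, so $H^0$ and $H^1$ both vanish) gives $H^0(E,mD|_E)=0$ for $m\gg 0$, whence $D|_E\notin\Eff(E)$. This is the content of \Cref{prop:NotEffective} in the case $c=1$, first bullet, and it is the entire argument for the reverse inclusion once combined with your correct observation that $\Eff(E)$ lies in the half-space $\bbf{\cdot}{E}\ge 0$.

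Your proposed alternative via BDPP and the slope behaviour of $\Omega_S$ along moving curves is not a proof: you yourself flag ``the step I expect to be the main obstacle'' without resolving it. The asserted identity $\overline{i_*\overline{\mathrm{Mov}}_1(E)}=\overline{\mathrm{Mov}}_1(X)+\RR_{\ge 0}[f]$ would in particular require every moving curve class on $X$ to be approximated by pushforwards from $E$, which is not at all clear, and the proposed correspondence between destabilising sub-line bundles of $\Omega_S|_C$ and the exceptional divisors $\Gamma_j\neq E$ (or the walls of $\Nef(X)$) remains a heuristic. The paper's route through Verbitsky bypasses all of this in two lines; this is also precisely where the no-flops assumption is consumed, namely to upgrade the movable witness $M$ to an ample one.
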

With this theorem we know how positive the cotangent bundle is, i.e. for any ample $H\in\Pic S$ this computes $\inf \{\frac{m}{n}\,|\, H^0(\Sym^n(\Omega_S)\otimes \oO(mH))\neq 0\}$.\par 
By the work of Bayer--Macr\`{i} \cite{bayer2014mmp}, the existence of flops is a purely numerical question and thus this directly gives a generalization of the results for Picard rank $1$ of Gounelas--Ottem \cite{ottem2020remarks}, as this yields the effective cone of $\PP(\Omega_S)\cong E$ in more degrees $d = H.H$ and also in higher Picard ranks.  Moreover, for Picard rank $1$ we now at least know one of the ample or effective cone in all degrees.\par
By the recent paper of Anella--Hoering \cite{anella2022cotangent} the condition that there is no flop is indeed necessary. They construct an effective divisor in $\PP(\Omega_S)$ for a K3 surface $S$ of degree $2$, which is not contained in $\Eff(X)_{E\ge 0}|_E$. They do so, by using the geometry of the fibration $S\to \PP^2$.\par
Generalizing the above further to different exceptional subvarieties we are able to compute the effective cone of some other geometrically interesting projectivized bundles. An example is that of stable Lazarsfeld--Mukai bundles $N = N_{g,r,d}$ with $\rho(g,r,d) = 0$ on a K3 surface $S$ with Picard rank $1$, which play an important role in Brill--Noether theory. By the classical theory on moduli spaces of sheaves there is an embedding $\PP(N_{g,r,d})\hookrightarrow X_N$ into a Hyperkähler variety $X_N$ such that the image is contained in the exceptional set of a contraction $X_N\to \overline{X}_N$. It turns out that in this setting a Lefschetz-type theorem holds as well:
\begin{theorem}
\label{mainthm3}
    Let $S$ be a K3 surface with Picard rank $1$ and $N$ a stable Lazarsfeld--Mukai bundle with $\rho = 0$. Suppose further that the Hyperkähler variety $X_N$ as above has no other flopping contraction except for possibly $X_N\to \overline{X}_N$. Then there is a $D\in \Pic(X_N)$ such that
    \begin{equation*}
        \Eff(\PP(N)) = \Eff(X_N)_{D\ge 0}|_{\PP(N)}.
    \end{equation*}
\end{theorem}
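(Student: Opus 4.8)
The approach mirrors the proof of \Cref{mainthm2}: rewrite the statement as an equality of two rank-two cones via the Beauville--Bogomolov--Fujiki form, and then match extremal rays one by one. First I would realise $X_N = M_\sigma(\mathbf w)$ as a moduli space of $\sigma$-stable objects on $S$ for a primitive Mukai vector $\mathbf w$ and a generic stability condition $\sigma$, chosen so that the embedded $\PP(N)$ is the locus of objects acquiring $\mathbf s := v(N)$ as a stable factor after crossing a single wall; since $\mathbf s^2 = 2\rho - 2 = -2$ (recall $\rho = \rho(g,r,d) = 0$), the class $\mathbf s$ is spherical, and by Bayer--Macrì this wall-crossing is exactly the birational contraction $X_N\to\overline X_N$, with $\overline X_N$ containing the image $S$ of $\PP(N)\to S$. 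As $S$ has Picard rank $1$, the algebraic Mukai lattice has rank $3$, so $N^1(X_N)\cong\mathbf w^\perp$ has rank $2$ and everything happens in rank-two lattices, exactly as for $S^{[2]}$. I would then \emph{define} $D\in\Pic(X_N)=\mathbf w^\perp$ to be the primitive class whose Beauville--Bogomolov-orthogonal $D^\perp$ cuts out the wall of $\operatorname{Mov}(X_N)$ — equivalently of $\operatorname{Nef}(X_N)$, if the contraction is small — produced by this wall-crossing, normalised so that $\bbf{D}{-}$ is nonnegative on the nef cone; concretely $D$ is proportional to the $\theta_{\mathbf w}$-image of the orthogonal projection of $\mathbf s$ onto $\mathbf w^\perp$. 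When $X_N\to\overline X_N$ is divisorial, $D$ is a positive multiple of the class of the exceptional divisor, and if that divisor is $\PP(N)$ one recovers the setup of \Cref{mainthm2}; when the contraction is small, $D$ is not effective.

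Next I would analyse the restriction $r^*\colon N^1(X_N)\to N^1(\PP(N))$. Since $S$ has Picard rank $1$, the $\PP^{\operatorname{rk}N-1}$-bundle $\PP(N)\xrightarrow{p}S$ has $N^1(\PP(N))=\ZZ\xi\oplus\ZZ f$ with $\xi=\oO_{\PP(N)}(1)$ and $f=p^*H$, and
\begin{equation*}
    \Eff(\PP(N))=\operatorname{Cone}\bigl(f,\,\xi-\mu f\bigr)
\end{equation*}
for a rational number $\mu$ measuring the positivity of $N$. Because $p$ is the restriction of $X_N\to\overline X_N$, every class pulled back from $\overline X_N$ restricts to a multiple of $f$; in particular the class $P\in D^\perp$ on the wall restricts to $r^*P\in\mathbb{R}_{>0}f$, just as $H^{[2]}|_E=p^*(2H)$ for $S^{[2]}$. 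Computing $r^*$ on a second lattice generator shows its restriction involves $\xi$, so $r^*$ is an isomorphism; moreover $\bbf{G}{D}\ge 0$ is equivalent to $r^*G\cdot\gamma\ge 0$, where $\gamma$ is the class of the contracted curve (a line in a fibre of $p$), since $G\mapsto G\cdot\gamma$ is a fixed positive multiple of $\bbf{G}{D}$. Now, by Bayer--Macrì together with the hypothesis that $X_N\to\overline X_N$ is the only flopping contraction, $\Eff(X_N)=\operatorname{Cone}(R_1,R_2)$ with $R_1$ the extremal ray on the $D$-negative side (the exceptional-divisor ray if the contraction is divisorial, the flopping side otherwise) and $R_2$ the opposite one; the no-extra-flops assumption gives $\Eff(X_N)_{D\ge 0}=\operatorname{Cone}(P,R_2)$, so after restriction \Cref{mainthm3} becomes equivalent to the single identity $r^*R_2\in\mathbb{R}_{>0}(\xi-\mu f)$.

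One inclusion here, that $r^*R_2$ lies inside $\operatorname{Cone}(f,\xi-\mu f)$, is routine: $R_2$ is represented by a prime divisor distinct from $\PP(N)$, or by the class of a Lagrangian fibre, or by a semiample class pulled back from a divisorial contraction, and each of these restricts to an effective class on $\PP(N)$; one only has to note that $\PP(N)$ is not contained in such a divisor — it is swept out by the curves $\gamma$, which lie in no divisor pulled back from $\overline X_N$ — and, when $\operatorname{rk}N\ge 3$, that $\PP(N)$ has codimension $\ge 2$ in $X_N$, so one restricts line-bundle classes rather than divisors. The reverse inclusion — that the boundary divisor of $\Eff(\PP(N))$ lifts to an effective class on $X_N$ — is the main obstacle, and it is where "Picard rank $1$", "$\rho = 0$" and "no other flopping contraction" all enter; it is precisely the inclusion that fails in the Anella--Hoering example. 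I would attack it by a double computation: on the $\PP(N)$-side, read $\mu$ off the Lazarsfeld--Mukai bundle from its evaluation sequence $0\to N^\vee\to H^0(C,A)\otimes\oO_S\to\iota_*A\to 0$ and the rigidity forced by $\rho = 0$, so that $\mu$ is the slope invariant computed in the remark following \Cref{mainthm2} for $\Omega_S$; on the $X_N$-side, pin down $R_2$ from the arithmetic of spherical and isotropic classes in $\mathbf w^\perp$ via the Bayer--Macrì wall description, where "no other flopping contraction" forces $R_2$ to be a genuine divisorial (or isotropic) boundary ray rather than an interior flopping wall. The content is that these two match; a more geometric alternative would be to exhibit a Brill--Noether locus on $X_N$ restricting to the boundary class $\xi-\mu f$, together with a complementary nef class, and to use Kawamata--Viehweg vanishing on $X_N$ (where $K_{X_N}=0$) to upgrade the numerical statement to an effective divisor. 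In either approach the no-flop assumption is exactly what forbids an extra effective divisor on $\PP(N)$ admitting no lift to $X_N$.
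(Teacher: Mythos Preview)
Your plan has the right architecture --- rank-two cones, match extremal rays --- but both inclusions have genuine gaps, and in each case the paper supplies a tool you have not identified.

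\textbf{The ``routine'' inclusion is not routine.} You need that the prime exceptional divisor $E$ generating $R_2$ restricts to an effective class on $\PP(N)$, which requires $\PP(N)\not\subset E$. Your justification --- that $\PP(N)$ is swept out by the contracted curves $\gamma$, which lie in no divisor pulled back from $\overline X_N$ --- proves nothing here: $E$ is \emph{not} pulled back from $\overline X_N$; it is the exceptional divisor of a contraction on the \emph{opposite} wall. A priori nothing prevents $\PP(N)\subset E$. The paper devotes an entire subsection (culminating in \Cref{prop:FlopNotContainedInExceptional}) to this point, and the argument is genuinely symplectic: one uses Kaledin's stratification of the target of the $E$-contraction, the density of rational curves on $S$, and the nonexistence of holomorphic $2$-forms on rational varieties to force a contradiction from $\PP(N)\subset E$. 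The hypothesis $\Nef(X)_{B\ge 0}=\Mov(X)_{B\ge 0}$ is what guarantees $E$ is contractible \emph{on $X$ itself}, so that this symplectic argument applies.

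\textbf{The ``main obstacle'' is attacked circularly.} Your proposal to compute $\mu$ directly from the evaluation sequence of $N$ and then match it against the Bayer--Macr\`i ray $R_2$ is the problem restated, not solved: the pseudoeffective threshold $\mu$ of a stable bundle on a K3 is precisely the invariant the theorem is meant to determine, and rigidity alone does not pin it down (cf.\ the tables in \Cref{sec:Computations}, where $\alpha_e$ depends delicately on Pell-type arithmetic). The paper instead proves this inclusion cohomologically via \Cref{prop:NotEffective}: for $D\notin\Eff(X)$ one finds a \emph{nef} $N'$ with $\bbf{N'}{D}<0$ --- here the no-other-flop hypothesis is exactly what makes $N'$ nef rather than merely movable --- and then Verbitsky's vanishing (\Cref{thm:Verbitsky}) kills $H^i(X,D^N\otimes F_j)$ for all $i$ up to $\codim\PP(N)$, which together with a locally free resolution of $I_{\PP(N)}$ forces $H^0(\PP(N),D^N|_{\PP(N)})=0$. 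Your Kawamata--Viehweg suggestion is in the right spirit but gives vanishing only in the wrong range; Verbitsky's half-dimensional vanishing is essential because $\PP(N)$ sits in codimension $r-1<n$.
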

The paper ends with the computation of the exact numerics of effective cones. We use the work of Bayer--Macr\`{i} \cite{bayer2014mmp} to compute the effective cone of the respective Hyperkähler varieties and then use the above theorems. We discuss this in detail for the cotangent bundle and Lazarsfeld--Mukai bundles of rank $2$. Some examples are given for higher Lazarsfeld--Mukai bundles of higher rank as well.

\section{Facts on Hyperkähler varieties and examples}
\label{sec:VanishingThms}
In this section we recall the geometry of Hyperkähler varieties and discuss some examples.
\begin{definition}
    A \emph{Hyperkähler variety} is a simply connected, projective Kähler manifold $X$, such that $H^0(X, \Omega^2_X) = \CC\sigma$ is spanned by an everywhere non-degenerate 2-form $\sigma$.
\end{definition}
In dimension $2$ Hyperkähler varieties are K3 surfaces. Moreover in higher dimensions $\dim X = 2n$ the behaviour is quite similar to that low-dimensional case. For example there is the \emph{Beauville-Bogomolov-Fujiki form} (BBF-form) 
\begin{equation*}
    \bbf{-}{-}\colon H^2(X,\ZZ)\times H^2(X,\ZZ) \to \ZZ
\end{equation*}
such that there exists a constant $c_X\in \ZZ$ with
\begin{equation*}
    \int \beta^{2n} = c_X \bbf{\beta}{\beta}^n
\end{equation*}
for all $\beta\in H^2(X,\ZZ)$. With respect to this form Boucksom  \cite[Section 4]{boucksom2004divisorial} defined a Zariski-decomposition as follows:
\begin{definition}
    Let $X$ be a Hyperkähler variety and $E$ prime divisor. Then $E$ is called \emph{exceptional} if $\langle E,E\rangle < 0$. 
    A finite collection of prime divisors $E_i$ is called an \emph{exceptional family} if the corresponding matrix $(\langle E_i, E_j\rangle)_{ij}$ is negative definite.
    
    On the other hand a divisor $D$ is called \emph{movable} if $D$ has base locus of codimension at least $2$. The closure of the cone generated by movable divisors is denoted $\Mov(X)$.
\end{definition}
\begin{theorem}[{\cite[Theorem 4.8 and Corollary 4.11]{boucksom2004divisorial}}]
    \label{thm:ZariskiDecomposition}
    Let $D\in \Pic X$ be a pseudo-effective divisor. Then there exists a rational Zariski-decomposition $D = M+\sum a_i E_i$ with $a_i \in \QQ_{\ge 0}$, where $M\in \Mov (X)$ and the $E_i$ are a exceptional family such that $\langle M, E_i\rangle = 0$ for all $i$.
\end{theorem}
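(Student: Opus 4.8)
The plan is to obtain the decomposition from the general divisorial Zariski decomposition on compact Kähler manifolds and then to read off the finer properties — negative definiteness, orthogonality, rationality — from the fact that on a hyperkähler variety the Beauville--Bogomolov--Fujiki form is a genuine bilinear form of Lorentzian type. First I would invoke the decomposition of Nakayama and Boucksom (which for surfaces is the classical Zariski--Fujita decomposition): to a pseudo-effective class $D$ one attaches, for every prime divisor $E$, the asymptotic vanishing order $\nu_E(D)$; only finitely many of these are nonzero, say along $E_1,\dots,E_r$, and writing $N(D)=\sum_i\nu_{E_i}(D)E_i$ and $M=D-N(D)$ one has $M\in\Mov(X)$, with $N(D)$ minimal among effective divisors whose complement is movable. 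Putting $a_i=\nu_{E_i}(D)\ge 0$ this already gives $D=M+\sum a_iE_i$, so the theorem reduces to: (i) the matrix $(\bbf{E_i}{E_j})$ is negative definite; (ii) $\bbf{M}{E_i}=0$ for all $i$; (iii) the $a_i$ are rational.

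For the hyperkähler input I would use the Fujiki relation $\int\beta^{2n}=c_X\bbf{\beta}{\beta}^n$ together with the existence of a Kähler class to see that $\bbf{-}{-}$ has signature $(1,\rho-1)$ on $N^1(X)$, so that $\{\bbf{\beta}{\beta}>0\}$ has two convex components and any two classes in the closure of the same component pair non-negatively, strictly unless one of them is zero. I would also use the dichotomy for prime divisors on $X$: if $\bbf{E}{E}\ge 0$ then $E$ is movable, while if $\bbf{E}{E}<0$ then $E$ is exceptional and is the unique effective divisor in its class. Since a movable prime divisor has $\nu_E(D)=0$, every $E_i$ occurring in $N(D)$ is exceptional, hence $\bbf{E_i}{E_i}<0$; and $\bbf{M}{E_i}\ge 0$ because $M$ movable pairs non-negatively with every exceptional prime divisor, this being part of the description of $\Mov(X)$.

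The core of the argument, and the step I expect to be the main obstacle, is proving (i) and (ii); these are exactly the content of Boucksom's Theorem 4.8 and Corollary 4.11 and they genuinely use the minimality built into $N(D)$. For (ii) it remains to exclude $\bbf{M}{E_{i_0}}>0$: if this held, then for small $\varepsilon>0$ the class $M-\varepsilon E_{i_0}$ would still be movable — intuitively, transferring a further $\varepsilon E_{i_0}$ into the negative part cannot create a movable obstruction, and this is made precise through the behaviour of the functions $D'\mapsto\nu_E(D')$ near $D$ — contradicting the minimality of $N(D)$. For (i), suppose $v=\sum c_iE_i\neq 0$ has $\bbf{v}{v}\ge 0$; splitting $v$ into its parts with $c_i\ge 0$ and with $c_i\le 0$ and using again that $N(D)$ is the minimal negative part, one reduces to the case $c_i\ge 0$ for all $i$, so that a positive multiple of $v$ is an effective divisor $\le N(D)$; but then $v$ is a nonzero pseudo-effective class lying inside the negative part of $D$ with $\bbf{v}{v}\ge 0$, and pairing $v$ and $M$ via the Lorentzian inequality of the previous paragraph contradicts either movability of $M$ or minimality of $N(D)$. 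On a surface this is precisely the classical fact that the support of the negative part of the Zariski decomposition has negative definite intersection matrix.

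Finally, (iii) is then formal. By (i) the Gram matrix $G=(\bbf{E_i}{E_j})$ is invertible, and by (ii) the coefficients satisfy the linear system $\bbf{D-\sum_j a_jE_j}{E_i}=0$, i.e.\ $G\,(a_j)_j=(\bbf{D}{E_i})_i$; since $X$ is projective both $G$ and the right-hand side are integral, so $(a_j)_j=G^{-1}(\bbf{D}{E_i})_i$ is a vector of rational numbers, and as these numbers coincide with the $\nu_{E_i}(D)\ge 0$ we conclude $a_i\in\QQ_{\ge 0}$, which finishes the proof.
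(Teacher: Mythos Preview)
The paper does not give a proof of this statement at all: it is quoted verbatim from Boucksom \cite[Theorem 4.8 and Corollary 4.11]{boucksom2004divisorial} and used as a black box. So there is no argument in the paper to compare your proposal against.

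That said, your outline is precisely Boucksom's strategy, and the deduction of rationality in (iii) from (i) and (ii) via the invertible integral Gram matrix is correct. Two places where your sketch is looser than what Boucksom actually needs: first, in the reduction for (i) you implicitly use that two \emph{distinct} exceptional prime divisors satisfy $\bbf{E_i}{E_j}\ge 0$, which is a separate (though easy) input on hyperkähler manifolds and should be stated; without it the splitting $v=v_+-v_-$ does not give $\bbf{v}{v}\le\bbf{v_+}{v_+}+\bbf{v_-}{v_-}$. Second, once you have $c_i\ge 0$ and $\bbf{v}{v}\ge 0$, the contradiction is not that ``a positive multiple of $v$ is $\le N(D)$'' but rather a direct Hodge-index-type argument: a nonzero effective class with $\bbf{v}{v}\ge 0$ lies in the closure of the positive cone, hence is movable, hence each $E_i$ in its support would be movable, contradicting $\bbf{E_i}{E_i}<0$. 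With these two adjustments your sketch matches Boucksom's proof.
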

\begin{corollary}
\label{cor:EffMovDual}
    The effective cone $\Eff(X)$ is dual to the movable cone $\Mov(X)$ with respect to the Beauville-Bogomolov-Fujiki form.
\end{corollary}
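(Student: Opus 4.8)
The plan is to derive the duality directly from the rational Zariski decomposition of Theorem \ref{thm:ZariskiDecomposition}. Recall that $\Eff(X)$ is by definition the closure of the cone spanned by effective (equivalently pseudo-effective) divisors, while $\Mov(X)$ is the closure of the cone spanned by movable divisors; both live in $N^1(X)$, on which $\langle-,-\rangle$ is a nondegenerate form. I want to show that the dual cone $\Mov(X)^{\vee} = \{D \in N^1(X) \mid \langle D, M\rangle \ge 0 \text{ for all } M \in \Mov(X)\}$ coincides with $\overline{\Eff}(X)$.

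First I would show the inclusion $\Eff(X) \subseteq \Mov(X)^{\vee}$, i.e.\ that $\langle D, M\rangle \ge 0$ whenever $D$ is pseudo-effective and $M$ is movable. Write the Zariski decomposition $D = P + \sum a_i E_i$ with $P \in \Mov(X)$, $a_i \ge 0$, and the $E_i$ an exceptional family orthogonal to $P$. A movable class $M$ has base locus of codimension $\ge 2$, so it restricts to a pseudo-effective (indeed effective up to limits) class on each prime divisor $E_i$; intersecting an effective curve class coming from $M^{n-1}$-type positivity against $E_i$ gives $\langle M, E_i\rangle \ge 0$ — more cleanly, $\Mov(X)$ lies in the closure of the movable cone, and for any movable $M$ and any prime divisor $E$ one has $\langle M, E\rangle \ge 0$ by Boucksom's results (a movable class pairs nonnegatively with every effective divisor, since it can be represented avoiding any fixed codimension-two locus). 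Then $\langle D, M\rangle = \langle P, M\rangle + \sum a_i \langle E_i, M\rangle \ge 0$, because $\langle P, M\rangle \ge 0$ (two movable classes pair nonnegatively — this again is part of the Boucksom package, or follows since $\Mov$ is contained in the closure of the birational-nef-type cone on which the form is a limit of Kähler forms) and each $\langle E_i, M\rangle \ge 0$.

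For the reverse inclusion $\Mov(X)^{\vee} \subseteq \overline{\Eff}(X)$, I would argue by contraposition using a separation/hyperplane argument. Suppose $D \notin \overline{\Eff}(X)$. Since $\overline{\Eff}(X)$ is a closed convex cone and $\langle-,-\rangle$ is nondegenerate, there is a class $\alpha \in N^1(X)$ with $\langle \alpha, D\rangle < 0$ but $\langle \alpha, E\rangle \ge 0$ for all $E \in \overline{\Eff}(X)$; in particular $\langle \alpha, E\rangle \ge 0$ for every prime (hence exceptional or not) divisor, so $\alpha$ pairs nonnegatively with every effective divisor. The key claim is then that such an $\alpha$ lies in $\Mov(X)$: a class pairing nonnegatively against all effective divisors has vanishing "negative part" in any Zariski-type decomposition, so it cannot have prime divisors in its stable base locus, i.e.\ it is movable (or a limit of movable classes). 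Granting this, $\alpha \in \Mov(X)$ while $\langle \alpha, D \rangle < 0$ shows $D \notin \Mov(X)^{\vee}$. Combining the two inclusions and taking closures gives $\overline{\Eff}(X) = \Mov(X)^{\vee}$, and by biduality of closed convex cones also $\Mov(X) = \overline{\Eff}(X)^{\vee}$, which is the asserted duality.

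The main obstacle I anticipate is the converse direction — specifically justifying that a class pairing nonnegatively with every prime divisor is automatically in the closure of the movable cone. Naively one would want to run a Zariski decomposition on $\alpha$ itself, but Theorem \ref{thm:ZariskiDecomposition} is stated for pseudo-effective $D$, so one must first know $\alpha$ is pseudo-effective before decomposing it; this is where I would lean on Boucksom's structural results (the orthogonality relation $\langle M, E_i\rangle = 0$ and the fact that the negative part is precisely the obstruction to movability) rather than reprove them, and possibly phrase the separation argument inside the big cone or handle the boundary by a limiting argument $D + \epsilon A$ for $A$ ample. The interior/boundary bookkeeping and the fact that we only get the statement up to closure (genuine effective vs.\ pseudo-effective) are the details that need care, but the conceptual content is entirely contained in Theorem \ref{thm:ZariskiDecomposition}.
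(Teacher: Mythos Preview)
The paper does not actually prove this corollary: it is stated immediately after Theorem~\ref{thm:ZariskiDecomposition} with the citation \cite[Theorem 4.8 and Corollary 4.11]{boucksom2004divisorial}, and Corollary~4.11 in Boucksom is precisely the duality $\Mov(X) = \overline{\Eff}(X)^\vee$. So the paper's ``proof'' is simply a reference, and your proposal is an attempt to unpack that reference starting from the Zariski decomposition alone.

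Your outline is the right shape, and you correctly locate the real difficulty. In the forward inclusion you invoke two facts --- that $\langle P,M\rangle\ge 0$ for movable $P,M$ and that $\langle M,E\rangle\ge 0$ for movable $M$ and prime $E$ --- which you do not prove but attribute to ``the Boucksom package''; these are indeed proved in \cite{boucksom2004divisorial} (movable classes lie in the closure of the positive cone, and a movable class has vanishing negative part, hence nonnegative pairing with every prime divisor), so citing them is legitimate but is already most of the content. For the reverse inclusion, the gap you flag is genuine and cannot be closed using only the statement of Theorem~\ref{thm:ZariskiDecomposition}: from $\langle\alpha,E\rangle\ge 0$ for all effective $E$ you need to conclude $\alpha\in\Mov(X)$, and the Zariski decomposition as stated only applies once you know $\alpha$ is pseudo-effective. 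One can get pseudo-effectivity of $\alpha$ from the signature $(1,\rho-1)$ of the BBF form (since $\alpha$ pairs nonnegatively with the entire big cone it must lie in the closed positive cone, hence in $\overline{\Eff}(X)$ by another result of Boucksom/Huybrechts), after which the decomposition $\alpha = P + \sum a_i E_i$ together with $\langle\alpha,E_i\rangle\ge 0$ and negative definiteness of $(\langle E_i,E_j\rangle)$ forces all $a_i=0$, i.e.\ $\alpha=P\in\Mov(X)$. That last step is clean, but the intermediate step (positive cone $\subset\overline{\Eff}$) is again an external input. In short: your plan is correct, but at every turn you end up citing exactly the results that the paper already bundles into its reference to Boucksom, so there is no genuinely independent argument here --- which is also why the paper does not attempt one.
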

In the next section we will use this description of the effective cone together with the vanishing theorems of Verbitsky to compute the effective cone of subvarieties.
Denote by $\overline{\kK}$ the Kähler cone and we define the dual 
\begin{equation*}
    \overline{\kK}^\vee = \{x\in \hh^{1,1}(X,\RR) \,|\, \bbf{x}{y}\ge 0 \; \text{for all}\; y\in \overline{\kK}\}.
\end{equation*}
\begin{theorem}[Verbitsky \cite{verbitsky2007quaternionic}]
\label{thm:Verbitsky}
    Let $X$ be a projective Hyperkähler variety of dimension $2n$ and let $L\in \Pic(X)$ such that $c_1(L) \notin \overline{\kK}^\vee$. Then     
    \begin{equation*}
        \hh^i(X, L) = 0
    \end{equation*}
    for $i< n$. Moreover for any vector bundle $F$ we have that there exists an $N_0\in \NN$ such that for all $N\ge N_0$ we have
    \begin{equation*}
        \hh^i(X, L^N\otimes F) = 0
    \end{equation*}
    for all $i< n$.
\end{theorem}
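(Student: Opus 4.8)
The plan is to follow Verbitsky's argument, which runs through the quaternionic Dolbeault complex; I will indicate its structure rather than the analytic details. First observe that the case $i=0$ already follows from the material above: by \Cref{cor:EffMovDual} the cone $\Eff(X)$ is dual to $\Mov(X)\supseteq\overline{\kK}$, so $\Eff(X)\subseteq\overline{\kK}^{\vee}$; hence $c_1(L)\notin\overline{\kK}^{\vee}$ forces $c_1(L)\notin\Eff(X)$, i.e.\ $\hh^{0}(X,L)=0$. For general $i<n$ one passes to Hodge theory: for any Hermitian metric, $\hh^{i}(X,L)$ is computed by the $\bar\partial_{L}$-harmonic $L$-valued $(0,i)$-forms. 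Since $c_1(L)\notin\overline{\kK}^{\vee}$ there is a genuine Kähler class $\omega$ with $\bbf{c_1(L)}{\omega}<0$, and by the Calabi--Yau theorem we may choose the hyperkähler metric on $X=(M,I)$ with $[\omega_{I}]=\omega$; write $\omega_{J},\omega_{K}$ for the remaining Kähler forms and $\sigma=\omega_{J}+\sqrt{-1}\,\omega_{K}$ for the holomorphic symplectic form.

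Next I would bring in the quaternionic structure. The quaternion action of $\mathrm{SU}(2)$ on the bundle $\Lambda^{\bullet}_{\CC}X$ of complex differential forms commutes with the Laplacian, and besides $\partial,\bar\partial$ one has the conjugated differentials $\partial_{J},\bar\partial_{J}$, obtained from $\bar\partial,\partial$ by conjugating with $J$; on a hyperkähler manifold these satisfy Kähler-type identities (e.g.\ $\partial_{J}$ anticommutes with $\partial$ and $\bar\partial$, and $\partial_{J}^{2}=0$), so that together with the Lefschetz $\mathfrak{sl}_{2}$ of $\omega_{I}$ they act on harmonic forms and, already untwisted, force Hodge-type symmetries. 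The heart of the proof is to twist this package by $L$: equip $L$ with its Chern connection for $I$, build the $L$-valued quaternionic Dolbeault complex, and derive a Weitzenböck identity comparing the $\bar\partial_{L}$-Laplacian with its $\mathrm{SU}(2)$-rotate. The surviving curvature term is governed by the $\omega$-trace $\Lambda_{\omega}\Theta$ of the curvature $\Theta$ of $L$, and by the Fujiki relation $\int_{X}(\Lambda_{\omega}\Theta)\,\omega^{2n}$ has the sign of $\bbf{c_1(L)}{\omega}$, hence is negative. From this one reads off that a nonzero $\bar\partial_{L}$-harmonic $(0,i)$-form would be pinned to an $\mathrm{SU}(2)$-weight incompatible with $\bbf{c_1(L)}{\omega}<0$ whenever $i<n$; therefore $\hh^{i}(X,L)=0$ for $i<n$.

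The extension to a vector bundle $F$ is then routine dévissage. By Serre duality it suffices to show $\hh^{j}(X,L^{-N}\otimes F^{\vee})=0$ for $j>n$ and $N\gg 0$. Pick a resolution $0\to G\to P_{k}\to\cdots\to P_{0}\to F^{\vee}\to 0$ with the $P_{s}$ finite direct sums of line bundles, $G$ locally free and $k$ large; in the hypercohomology spectral sequence of the $L^{-N}$-twisted complex the tail $G$ contributes only in degrees $\le n$, while for each line-bundle summand $M$ of some $P_{s}$ one has $\hh^{q}(X,L^{-N}\otimes M)\cong\hh^{2n-q}(X,L^{N}\otimes M^{-1})^{\vee}$, which vanishes for $q>n$ once $N\gg 0$ by the line-bundle case, since $\bbf{Nc_1(L)-c_1(M)}{\omega}\to-\infty$. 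Hence $\hh^{j}(X,L^{-N}\otimes F^{\vee})=0$ for $j>n$, and Serre duality again gives $\hh^{i}(X,L^{N}\otimes F)=0$ for $i<n$.

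The step I expect to be the main obstacle is the twisting in the second paragraph. The curvature of $L$ is of type $(1,1)$ only for $I$ and is not $\mathrm{SU}(2)$-invariant, so after twisting by $L$ the operators $\partial_{J},\bar\partial_{J}$ no longer square to zero nor anticommute with $\partial,\bar\partial$ exactly; one has to track the resulting curvature commutators, isolate the component controlled by $\bbf{c_1(L)}{\omega}$, and check that it enters the Weitzenböck identity with the correct sign. Equally delicate is the bookkeeping that yields the vanishing range $i<n$ rather than $i<2n$: this is precisely the weight shift caused by $\sigma$ and $\bar\sigma$ in the $\mathfrak{sl}_{2}$-action, and making it rigorous is where the argument becomes genuinely hyperkähler rather than a formal consequence of Kodaira--Nakano vanishing.
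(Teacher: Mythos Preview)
The paper does not give a proof of this statement: it is quoted as a result of Verbitsky with a citation to \cite{verbitsky2007quaternionic} and is used throughout as a black box. So there is nothing in the paper to compare your attempt against.

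That said, your sketch is a faithful outline of Verbitsky's own argument via the quaternionic Dolbeault complex, and you have correctly located the genuinely hard step (the twisted Weitzenb\"ock identity and the $\mathfrak{sl}_2$-bookkeeping that produces the range $i<n$). As written this is an honest plan rather than a proof; filling the second paragraph would essentially mean reproducing the analytic core of \cite{verbitsky2007quaternionic}.

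Your d\'evissage for the second assertion is cleaner than you perhaps realise and stands on its own once the line-bundle case is granted. Two minor remarks: first, you do not need the double Serre duality detour --- resolving $F$ itself by sums of line bundles and applying the line-bundle case directly to each $L^{N}\otimes M$ gives the same conclusion with one less step. Second, for the tail term $G$ to be harmless you need the resolution length $k\ge n-1$, so that $G$ only contributes to $H^{j}$ with $j\le 2n-k-1\le n$; you say ``$k$ large'' but it is worth recording the precise threshold. Finally, the $i=0$ observation via \Cref{cor:EffMovDual} is a nice shortcut but is logically independent of Verbitsky's analytic argument, so be careful not to present it as part of the same proof strategy.
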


\section{Effective cone of smooth subvarieties}
\label{sec:EffConeAmple}
Let $X$ be a Hyperkähler variety of dimension $2n$ and $Y\subset X$ a smooth subvariety. In this section we will examine cases where the restriction map $\Pic(Y)\to \Pic(X)$ induces a bijection of the effective cones.
\begin{remark}
\label{rem:MovableDivisorsAmple}
    Recall from \cite[Prop. 1]{matsushita2014almost} that for any movable divisor $M\in \Pic(X)$ there is another Hyperkähler variety $X'$, an ample divisor $A\in \Pic(X')$ and a birational map $f\colon X\dashrightarrow X'$ such that $M = f^*A$.
\end{remark}
\begin{proposition}
\label{prop:NotEffective}
    Let $Y\subset X$ be a smooth subvariety of codimension $c < n$ and $D\in \Pic(X)$ be a divisor which is not $\QQ$-effective. Suppose there is a divisor $M$ with $\bbf{M}{D}<0$ such that either
    \begin{itemize}
        \item $M$ is ample or 
        \item  $Y$ is a complete intersection and $M$ is movable as a pullback of an ample divisor $A$ from another Hyperkähler variety $X'$ such that $X$ and $X'$ are isomorphic in codimension $c+1$.
    \end{itemize}
    Then $D|_Y$ is not $\QQ$-effective as well.
\end{proposition}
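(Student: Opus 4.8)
The plan is to argue by contradiction: suppose $D|_Y$ is $\QQ$-effective, so some positive multiple $H^0(Y, \oO_Y(mD)) \neq 0$, and derive a contradiction with Verbitsky's vanishing theorem (\Cref{thm:Verbitsky}) applied on $X$. The bridge between $X$ and $Y$ will be the Koszul resolution of $\oO_Y$ by exterior powers of the conormal bundle (or, in the complete intersection case, the explicit Koszul complex of the defining sections), twisted by $\oO_X(mD)$ for a large multiple. First I would observe that the hypothesis $\bbf{M}{D} < 0$ with $M$ ample (resp.\ movable as in the second bullet) forces $c_1(mD) \notin \overline{\kK}^\vee$ for every $m>0$: in the ample case this is immediate since $M \in \overline{\kK}$; in the movable case one pulls back along the birational map $f\colon X \dashrightarrow X'$ identifying $M = f^*A$ and uses that $f$ is an isomorphism in codimension $c+1 \ge 2$, so it preserves $N^1$, the BBF-form, and the pseudoeffective/movable structure, and $\overline{\kK}^\vee \subset \Eff$, so $D \notin \overline{\kK}^\vee$ on $X'$ hence on $X$.

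Next I would set up the Koszul complex. When $Y$ is a complete intersection of ample divisors $A_1,\dots,A_c$, the sequence
\begin{equation*}
    0 \to \oO_X\!\left(-\!\sum A_i\right) \to \cdots \to \bigoplus_i \oO_X(-A_i) \to \oO_X \to \oO_Y \to 0
\end{equation*}
is exact; tensoring with $\oO_X(mD)$ and chasing through the hypercohomology spectral sequence, a nonzero class in $H^0(Y, mD|_Y)$ would be obstructed unless some $H^{j}(X, \oO_X(mD) \otimes \oO_X(-\sum_{i\in S} A_i)) \neq 0$ with $j = |S| \le c < n$. But each twist $mD - \sum_{i\in S}A_i$ still has strictly negative BBF-pairing with $M$ (subtracting ample, resp.\ movable-nef, divisors only decreases $\bbf{M}{-}$), hence lies outside $\overline{\kK}^\vee$, so Verbitsky's first vanishing statement kills all these groups for $j \le c < n$. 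This forces $H^0(Y, mD|_Y) = 0$, the desired contradiction. In the general (non-complete-intersection) case in the first bullet, I would replace the Koszul complex of sections by the filtration of $\oO_X$ along powers of the ideal sheaf $\iI_Y$, whose graded pieces are $\Sym^k N^\vee_{Y/X}$-type sheaves supported on $Y$; here I would instead use the second part of Verbitsky's theorem — for the vector bundle $F$ a suitable symmetric power of the conormal bundle pushed forward, $H^i(X, (mD)^{\otimes N} \otimes F) = 0$ for $i < n$ and $N \gg 0$ — combined with the fact that effectivity on $Y$ propagates to arbitrarily large multiples.

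The main obstacle I anticipate is the bookkeeping in the non-complete-intersection case: passing from "$D|_Y$ is $\QQ$-effective" to a vanishing statement purely on $X$ requires controlling the higher conormal sheaves $\Sym^k N^\vee$ and checking that the vector-bundle version of Verbitsky's vanishing can be applied uniformly — this is exactly why the complete intersection hypothesis appears in the second bullet, where the Koszul complex is finite and the twists are honest line bundles, and why in the general case one must restrict to $M$ ample so that the stronger "$L^N \otimes F$" form of the theorem is available. A secondary subtlety is verifying that in the movable case the isomorphism in codimension $c+1$ genuinely suffices: one needs that restriction of sections to $Y$ is unaffected by modifying $X$ in codimension $\ge c+1$, which holds because $Y$ and a neighborhood of it in $X$ are untouched by the flop, so $H^0(Y, mD|_Y)$ is literally the same on $X$ and $X'$, reducing that case to the ample case on $X'$.
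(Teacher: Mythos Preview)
Your overall architecture --- ideal sheaf sequence, resolution of $I_Y$, Verbitsky's vanishing --- matches the paper, and the complete-intersection sub-case with $M$ ample is essentially right. But the other two sub-cases have real gaps.

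In the non-complete-intersection case with $M$ ample, the graded pieces $I_Y^k/I_Y^{k+1}\cong\iota_*\Sym^k N^\vee_{Y/X}$ of your ideal-adic filtration are coherent sheaves supported on $Y$, not vector bundles on $X$. Verbitsky's theorem requires an honest vector bundle on the Hyperkähler $X$; if you try to apply it to $\iota_*\Sym^k N^\vee$, the projection formula just hands you back cohomology on $Y$, which is what you are trying to control. The paper replaces this step by a commutative-algebra argument: since $Y$ and $X$ are both smooth and $\codim_X Y=c$, Auslander--Buchsbaum gives $\pdim I_{Y,x}=c-1$ at every point, hence a global locally free resolution $F_{c-1}\to\cdots\to F_0\to I_Y$ of length $c-1$ on $X$. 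One then applies the vector-bundle form of Verbitsky's vanishing to each $F_i\otimes D^N$ for $N\gg 0$, which kills $H^j$ for $j\le c<n$ and forces $H^1(X,I_Y\otimes D^N)=0$.

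In the movable case, your reduction is incorrect: the map $f\colon X\dashrightarrow X'$ is an isomorphism only outside a locus $Z$ of codimension $\ge c+2$, and $Y$ (codimension $c$) will typically meet $Z$, so neither $Y$ nor $H^0(Y,mD|_Y)$ transports literally to $X'$, and the proper transform of $Y$ need not be a smooth complete intersection there. (Relatedly, the inclusion $\overline{\kK}^\vee\subset\Eff$ in your first paragraph goes the wrong way --- one has $\Eff=\Mov^\vee\subset\Nef^\vee=\overline{\kK}^\vee$ --- and the Kähler cone itself is not preserved by $f$, so $D\notin\overline{\kK}^\vee(X')$ does not give $D\notin\overline{\kK}^\vee(X)$.) The paper never moves $Y$. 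Because $Y$ is a complete intersection, the Koszul terms $F_i$ are direct sums of line bundles; a depth/local-cohomology computation shows $H^j(X,D^N\otimes F_i)\cong H^j(X\setminus Z,D^N\otimes F_i)$ for all $j\le c$, and since line bundles extend across codimension $\ge 2$ these groups equal $H^j(X',{D'}^N\otimes F_i')$. Only then does one apply Verbitsky --- on $X'$, where $A$ is genuinely ample and $\bbf{A}{D'}=\bbf{M}{D}<0$.
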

Ideally, we would want to apply Verbitsky's vanishing theorem to the ideal sheaf $I_Y\otimes D$. But that is not directly possible as $I_Y$ is not a vector bundle if $Y$ has codimension $\ge 2$. Therefore we need a nice locally free resolution of $I_Y$, which in the case that $Y$ is a complete intersection is possible by considering the Koszul complex: If $Y = \bigcap_{i=1}^c D_i$ is the intersection of divisors $V(s_i) = D_i \subset X$, where $s_i\in H^0(X, \oO(D_i))$, then the Koszul-complex associated to $$E= \bigoplus \oO_X(-D_i)\xrightarrow{(s_1,\ldots, s_n)} \oO_X$$ yields a locally free resolution of $I_Y$ of length $c-1$.\par
A resolution by vector bundles of the same length can even be found in the broader case when $Y$ is a smooth subvariety. We include a proof here as we could not find a direct reference.
\begin{lemma}
\label{lem:Resolution}
    Let $X$ be a smooth variety and $Y\subset X$ a smooth subvariety of codimension $\codim_X Y = c$. Then there exists a locally free resolution $F_{c-1} \to \ldots \to F_0 \to I_Y$ of length $c-1$.
\end{lemma}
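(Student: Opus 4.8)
The plan is to write down \emph{any} locally free resolution of $I_Y$ and then truncate it, the point being that its $(c-1)$-st syzygy sheaf is automatically a vector bundle. Concretely, I would first produce a (possibly infinite) resolution $\cdots\to F_1\to F_0\to I_Y\to 0$ by vector bundles: since $X$ is a variety, every coherent sheaf is a quotient of a vector bundle — on a quasi-projective $X$ one may even take a finite direct sum of copies of some $\oO_X(-m)$ — and iterating this on the successive (coherent) kernels yields such a resolution. Write $\mathcal{Z}$ for the $(c-1)$-st syzygy, i.e.\ $\mathcal{Z}=\ker(F_{c-2}\to F_{c-3})$ if $c\ge 3$, $\mathcal{Z}=\ker(F_0\to I_Y)$ if $c=2$, and $\mathcal{Z}=I_Y$ if $c=1$.

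The heart of the matter is the claim that $\mathcal{Z}$ is locally free; granting it, one sets $F_{c-1}:=\mathcal{Z}$ and the truncated complex $0\to F_{c-1}\to F_{c-2}\to\cdots\to F_0\to I_Y\to 0$ is the desired resolution of length $c-1$. As $\mathcal{Z}$ is coherent, it is locally free once every stalk $\mathcal{Z}_x$ is a free $\oO_{X,x}$-module, and this I would check pointwise. If $x\notin Y$ then $I_{Y,x}=\oO_{X,x}$ and all of its syzygies are free there. If $x\in Y$, smoothness of $Y$ in $X$ means that near $x$ the subvariety $Y$ is cut out by a regular sequence $f_1,\dots,f_c$, so the Koszul complex on $(f_1,\dots,f_c)$ is a free resolution of $\oO_{Y,x}$ of length $c$, whence $\operatorname{pd}_{\oO_{X,x}}\oO_{Y,x}\le c$ (indeed $=c$, also by Auslander--Buchsbaum, $\oO_{Y,x}$ being regular); then $0\to I_{Y,x}\to\oO_{X,x}\to\oO_{Y,x}\to 0$ gives $\operatorname{pd}_{\oO_{X,x}}I_{Y,x}\le c-1$. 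It remains to invoke the standard syzygy lemma over a Noetherian local ring $R$: if $\operatorname{pd}_R M\le n$ and $0\to S\to P_{n-1}\to\cdots\to P_0\to M\to 0$ with the $P_i$ free, then dimension shifting gives $\operatorname{Ext}^1_R(S,-)\cong\operatorname{Ext}^{n+1}_R(M,-)=0$, so $S$ is projective, hence free; applying this with $M=I_{Y,x}$ and $n=c-1$ shows $\mathcal{Z}_x$ is free.

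The argument is essentially formal, so I do not foresee a real obstacle; the only points deserving a little care are the existence of a genuine vector bundle resolution to begin with — this is where one uses that $X$ is quasi-projective (which holds in all our applications) — and the degenerate case $c=1$, where $I_Y$ is already invertible and there is nothing to do.
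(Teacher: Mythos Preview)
Your proof is correct and follows essentially the same strategy as the paper: both reduce to the local statement $\operatorname{pd}_{\oO_{X,x}} I_{Y,x}\le c-1$ and then invoke the standard fact (Hartshorne, Ex.~III.6.5, which you effectively reprove via the syzygy/truncation argument) that a uniform stalkwise bound on projective dimension yields a global locally free resolution of that length.

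The only real difference is in how the local bound is obtained. You use that $Y\subset X$ smooth in smooth means $I_{Y,x}$ is generated by a regular sequence of length $c$, so the Koszul complex gives $\operatorname{pd}\,\oO_{Y,x}\le c$ directly, hence $\operatorname{pd}\,I_{Y,x}\le c-1$. The paper instead computes $\depth_{\mathfrak m}(R/I)=n-c$ from Cohen--Macaulayness of the regular local ring $R/I$, then uses the depth inequalities for $0\to I\to R\to R/I\to 0$ to get $\depth I=n-c+1$, and concludes via Auslander--Buchsbaum. Your route is a touch more elementary and geometric; the paper's is more in the spirit of commutative algebra and gives the exact value $\operatorname{pd}\,I_{Y,x}=c-1$ rather than just the inequality. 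Both are standard and equally valid. Your caveat about needing enough vector bundles (quasi-projectivity, or more generally the resolution property) is apt and applies to the paper's argument as well, since Hartshorne's exercise assumes it.
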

\begin{proof}
    By \cite[Ex.\ III.6.5]{HartshorneBook} it suffices to show that the projective dimension of all stalks $I_{Y,x}$ is at most $c-1$ for all points $x\in X$ and thus only for $x\in Y$.
    Thus, let $R= \oO_{X,x}$ be a regular local ring of dimension $n$ and $I$ an ideal such that $R/I$ is regular as well and of codimension $c> 0$. Denote by $\mm\subset R$ the maximal ideal. Then it is known that $\depth_\mm R = n$ as any regular local ring is Cohen-Macaulay. But as the depth of an $R$-module is the maximal length of an $\mm$-regular sequence we have that the depth of $R/I$ as an $R$-module coincides with the depth as an $R/I$-module and thus  $\depth_\mm R/I = \depth R/I= \dim R/I = n-c$. Therefore, by the short exact sequence
    \begin{equation*}
        0 \to I \to R \to R/I \to 0
    \end{equation*}
    we get the inequalities $n-c= \depth R/I \ge \min (n, \depth I - 1)$ and also $\depth I \ge \min(n, \depth R/I+1) = n-c+1$ as $I,R/I$ are both non-zero. But this yields $\depth I = n-c+1$ and the Auslander-Buchsbaum formula gives
    \begin{equation*}
        \pdim I = \depth R - \depth I =  c-1.\qedhere
    \end{equation*}
\end{proof}
To recover the cohomology of the ideal sheaf from this data, we observe that the following holds by chasing cohomology in an exact sequence.

\begin{corollary}
\label{cor:Hypercohomology}
    Suppose $F_\bullet \to I$ is a resolution of a sheaf $I$ of length $n$ such that $H^i(X, F_j) = 0$ for all $j$ and all $1\le i \le n+1$. Then also $H^1(X, I) = 0$. If moreover $H^0(X, F_0) = 0$ then also $H^0(X, I) = 0$.\qed
\end{corollary}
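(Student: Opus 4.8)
The plan is to break the resolution into short exact sequences and chase the associated long exact cohomology sequences, each time shifting the cohomological degree up by one. Write the resolution as $0 \to F_n \to F_{n-1} \to \cdots \to F_0 \to I \to 0$ with differentials $d_j \colon F_j \to F_{j-1}$ and $d_0 \colon F_0 \to I$, and set $C_0 = I$ and $C_j = \operatorname{im}(d_j) = \ker(d_{j-1})$ for $1 \le j \le n$, so that $C_n = F_n$ and one obtains short exact sequences $0 \to C_{j+1} \to F_j \to C_j \to 0$ for $0 \le j \le n-1$.

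First I would feed the sequence with $j = 0$ into cohomology: the piece $H^1(X, F_0) \to H^1(X, I) \to H^2(X, C_1) \to H^2(X, F_0)$ together with $H^1(X, F_0) = H^2(X, F_0) = 0$ gives $H^1(X, I) \cong H^2(X, C_1)$. Iterating over $j = 1, \ldots, n-2$ produces isomorphisms $H^{j+1}(X, C_j) \cong H^{j+2}(X, C_{j+1})$, which are legitimate because the relevant groups $H^{j+1}(X, F_j)$ and $H^{j+2}(X, F_j)$ vanish (here $j+2 \le n+1$). Composing these yields $H^1(X, I) \cong H^n(X, C_{n-1})$, and the final short exact sequence $0 \to F_n \to F_{n-1} \to C_{n-1} \to 0$ gives the piece $H^n(X, F_{n-1}) \to H^n(X, C_{n-1}) \to H^{n+1}(X, F_n)$ with both outer terms zero (since $n, n+1 \le n+1$); hence $H^n(X, C_{n-1}) = 0$ and therefore $H^1(X, I) = 0$.

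For the second assertion, the same $j = 0$ sequence contributes the exact piece $H^0(X, F_0) \to H^0(X, I) \to H^1(X, C_1)$, so once $H^0(X, F_0) = 0$ it remains only to check $H^1(X, C_1) = 0$. Running the identical shifting argument one degree lower gives $H^1(X, C_1) \cong H^{n-1}(X, C_{n-1})$, which vanishes via the last short exact sequence because $H^{n-1}(X, F_{n-1}) = H^n(X, F_n) = 0$. The degenerate cases $n = 0$ and $n = 1$ are checked directly.

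I do not expect a genuine obstacle here: the statement is a routine diagram chase. The only point that needs a little care is the bookkeeping, namely verifying that every cohomology group of an $F_j$ invoked along the way lies in the range $H^1, \ldots, H^{n+1}$, which is exactly the range in which the hypothesis guarantees vanishing; keeping track of the degree shifts is precisely what makes the length-$n$ bound on the resolution match the $(n+1)$-step vanishing assumption.
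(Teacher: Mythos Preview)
Your argument is correct and is precisely the routine cohomology chase the paper has in mind; the paper itself gives no proof beyond the remark that the statement ``holds by chasing cohomology in an exact sequence'' and a \qed. Your careful bookkeeping of the degree shifts and the explicit treatment of the edge cases $n=0,1$ simply spell out what the paper leaves implicit.
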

To deal with flops we also need the following lemma which yields a comparison of cohomology groups for birational varieties.
\begin{lemma}
\label{lem:Comparison}
    Let $F$ be a locally free sheaf on a smooth variety $X$ and $Y\subset X$ of codimension $c>0$. Denote the complement by $U = X\backslash Y$. Then $H^i(X, F)\cong H^i(U, F|_U)$ for all $1\le i\le c-2$. 
\end{lemma}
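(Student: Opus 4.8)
The plan is to compare the two groups through local cohomology supported along $Y$. Writing $\underline{H}^j_Y$ for the local cohomology sheaves and $H^j_Y(X,-)$ for the local cohomology groups, the distinguished triangle $R\Gamma_Y(F)\to R\Gamma(X,F)\to R\Gamma(U,F|_U)\xrightarrow{+1}$ yields a long exact sequence
\[
\cdots \to H^i_Y(X,F)\to H^i(X,F)\to H^i(U,F|_U)\to H^{i+1}_Y(X,F)\to \cdots.
\]
Consequently it is enough to prove that $H^j_Y(X,F)=0$ for every $j\le c-1$: then for $1\le i\le c-2$ both $H^i_Y(X,F)$ and $H^{i+1}_Y(X,F)$ vanish, and the restriction map $H^i(X,F)\to H^i(U,F|_U)$ is an isomorphism, as claimed. (The statement is of course vacuous unless $c\ge 3$.)

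For the vanishing I would use the local-to-global spectral sequence $H^p(X,\underline{H}^q_Y(F))\Rightarrow H^{p+q}_Y(X,F)$, which reduces the problem to showing $\underline{H}^q_Y(F)=0$ for $q<c$. This is a pointwise statement: the stalk at $y\in Y$ is the local cohomology module $H^q_{I_y}(F_y)$, where $I_y\subset R:=\oO_{X,y}$ is the ideal of $Y$ at $y$. Since $X$ is smooth, $R$ is a regular, hence Cohen--Macaulay, local ring, and $F_y$ is free over $R$; exactly as in the depth computation in the proof of \Cref{lem:Resolution}, one has $\depth_{I_y}(F_y)=\depth_{I_y}(R)=\operatorname{ht}(I_y)=c$, using that grade equals height over a Cohen--Macaulay ring. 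The general vanishing $H^q_I(M)=0$ for $q<\depth_I(M)$ then gives $H^q_{I_y}(F_y)=0$ for $q<c$, so $\underline{H}^q_Y(F)=0$ for $q<c$, and feeding this back through the spectral sequence gives $H^j_Y(X,F)=0$ for $j\le c-1$.

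I expect the only mildly delicate points to be the bookkeeping in the two homological arguments (the local cohomology long exact sequence and the local-to-global spectral sequence) together with checking that the depth/height computation is carried out at every point of $Y$, not merely at the generic point; the rest is formal. As an alternative that avoids the spectral sequence, when $Y$ is smooth one may work on an affine chart on which $I_Y$ is generated by a regular sequence $t_1,\dots,t_c$ and compute $R\Gamma_Y$ on that chart via the stable Koszul (\v{C}ech) complex on the $t_i$; on a free module this complex visibly has no cohomology in degrees $<c$, which is perhaps the most self-contained route and matches the Koszul-complex bookkeeping already used earlier in the section.
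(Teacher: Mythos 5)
Your proof is correct and follows essentially the same route as the paper: both reduce the comparison to the vanishing of the local cohomology sheaves $\mathcal{H}^q_Y(F)$ for $q\le c-1$ and then establish that vanishing by a depth computation using smoothness (Cohen--Macaulayness) of $X$ at points of $Y$. The only difference is presentational: the paper simply cites Grothendieck's comparison and depth criteria (Prop.\ 1.11 and Thm.\ 3.8 of the local cohomology notes) where you unpack them via the local cohomology long exact sequence, the local-to-global spectral sequence, and the grade-equals-height argument.
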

\begin{proof}
    By \cite[Prop. 1.11]{GrothendieckLocal} the restriction morphism $H^i(X, F)\to H^i(U, F|_U)$ is an isomorphism if the local cohomology sheaves $\mathcal{H}^i_Y(F)= 0$ vanish for all $i\le c-1$. On the other hand, this vanishing is equivalent to $\depth_Y F = \min_{x\in Y} \depth F_x \ge c$ by \cite[Theorem 3.8]{GrothendieckLocal}. Thus, we compute the latter: $F_x = \oO_{X,x}^{\oplus n}$ has depth 
    \begin{equation*}
        \depth F_x \ge \depth \oO_{X,x} = \dim \oO_{X,x} \ge c
    \end{equation*}
    as $X$ is smooth and $x\in Y$ is a point of codimension at least $c$. 
\end{proof}

\begin{proof}[Proof of \Cref{prop:NotEffective}]
    For any $N\in \NN$ the ideal sheaf sequence for $Y$ yields an exact sequence
    \begin{equation*}
        0\to D^N\otimes I_Y \to D^N \to D^N|_Y\to 0
    \end{equation*}
    Thus, we only need to show that $H^1(X,D^N\otimes I_Y) = 0$ for all $N\gg 0$.
    In any case we have a resolution by locally free sheaves $F_{c-1}\to \ldots \to F_0 \to I_Y$ by \Cref{lem:Resolution} which when tensoring with $D^N$ yields a resolution of $D^N\otimes I_Y$. By  \Cref{cor:Hypercohomology} it suffices to show that $H^j(X, F_i\otimes D^N) = 0$ for all $j\le c$.\par
    
    Thus, in the case that $M$ is ample, Verbitsky's vanishing \Cref{thm:Verbitsky} gives the claim.\par
    
    Now assume that $M$ is movable and we are in the second case. Denote by $Z \subset X$ the exceptional locus of the birational map $f\colon X\dashrightarrow X'$ such that $M = f^*A$ for some ample $A\in \Pic X'$ and $X'$ a Hyperkähler variety as in \Cref{rem:MovableDivisorsAmple}. Then the codimension of $Z$ is greater than $c+2$ and by \Cref{lem:Comparison} we have $H^j(X, D^N\otimes F_i) = H^j(X\backslash Z , D^N\otimes F_i)$ for all $j\le c$. But as we may choose the $F_i$ to be the Koszul complex we may assume that the $F_i$ are sums of line bundles. Therefore they extend to vector bundles $F'_i$ on $X'$ as any line bundle extends over a codimension $\ge 2$ subset. Hence, $H^j(X\backslash Z , D^N\otimes F_i) = H^j(X' , {D'}^N\otimes F'_i) = 0$ for $N\gg 0$ by Verbitsky's theorem, where $D'$ corresponds to $D$ via the birational model and satisfies $\bbf{D'}{A} = \bbf{D}{M}<0$ as the Beauville-Bogomolov-Fukiji-form is preserved under passing to this birational model. 
\end{proof}

Thus, if $Y\subset X$ is a complete intersection, we have now shown that $\Eff(E)\subset \Eff(X)|_E$. The following finishes the proof of the main result \Cref{mainthm1}, showing that the inclusion is actually an equality
\begin{equation*}
    \Eff(E) = \Eff(X)|_E.
\end{equation*}

\begin{proof}[Proof of \Cref{mainthm1}]
    The Lefschetz-theorem for the Picard group (see \cite[IV.3.3]{HartshorneAmpleSubvarieties}) yields that $\Pic(X)\to \Pic(Y)$ is an isomorphism. Thus, by \Cref{prop:NotEffective} we only need to show that pseudo-effective divisors restrict to pseudo-effective divisors.
    
    By the Zariski-decomposition, see \Cref{thm:ZariskiDecomposition},  it suffices to show that movable divisors $M$ and exceptional divisors $E$ restrict to pseudo-effective ones on $Y$. 
    
    As in \Cref{rem:MovableDivisorsAmple} there exists a birational map $f\colon X\dashrightarrow X'$ to a Hyperkähler variety $X'$ and a nef divisor $A$ such that $f^*A = M$. Changing $M$ slightly we may assume that $A$ is ample. Thus, the base locus of $M$ is contained in the exceptional locus of $f$ and by assumption this has codimension $c+2$. Therefore $M|_A$ is effective.
    
    To prove effectivity for an exceptional divisor $E$ let $F_\bullet \to I_Y$ be the Koszul complex. Thus, $F_i = \oplus L_i$ for some ample line bundles $L_i$. By \Cref{cor:EffMovDual} there exists a divisor $N\in \Mov(X)$ such that $\bbf{N}{E} = 0$. Again, as in \Cref{rem:MovableDivisorsAmple} on a birational model $X\dashrightarrow X'$ the divisor $N$ is big and nef and $\bbf{N}{E-c_1(L_i)} < 0$. With the same method as in \Cref{prop:NotEffective} we get that $H^j(\oO(E)\otimes L_i^\vee) = 0$ for all $j\le c$ and thus by \Cref{cor:Hypercohomology} we have $H^0(X, E\otimes I_Y) = 0$. Therefore the short exact sequence
    \begin{equation*}
        0\to \oO(E)\otimes I_Y \to \oO(E) \to \oO_Y(E)\to 0
    \end{equation*}
    gives that $H^0(Y, \oO(E)) \neq 0$.
\end{proof}

\section{Effective cones of projectivized bundles}
\label{sec:EffConeExc}
In this section we will compute the effective cone of some projectivized vector bundles $E = \PP_S(\fF)$ on a K3 surface $S$. This will be done by embedding $E\hookrightarrow X$ into a Hyperkähler variety $X$. In most cases these will not be complete intersections but contained in the exceptional set of a birational morphism $X\to\overline{X}$. This leads us to generalize the approach of the last section to these subvarieties as well.\par
We start by giving some examples and elaborate on the construction thereof. 
\begin{construction}[{See overview article \cite{macrì2019lectures}}]
    Let $S$ be a K3 surface and $\sigma$ a stability condition, see \cite{macrì2019lectures} for details and definitions. Then, for a primitive $v\in \ZZ\oplus \Pic(S)\oplus \ZZ =\colon \Halg(S), \phi\in \RR$ and if $\sigma$ does not lie on a wall the moduli space every semi-stable object of Mukai vector $v$ is stable and
    \begin{equation*}
        X = M_\sigma(v, \phi) = \{E\in D^b(S)\,|\, E \;\text{is}\; \sigma\text{-stable, }v(E) = v\text{ and has phase }\phi\}
    \end{equation*} is a projective Hyperkähler variety with $\dim X = v^2+2$.
    The natural pairing
    \begin{align*}
        \Halg(S) \times \Halg(S) &\to \ZZ & (a,D,b),(a', D', b') &\mapsto D.D' - ab'-ba'
    \end{align*}
    is compatible with the Beauville-Bogomolov-Fujiki form in the following sense.
    If $v^2\ge 2$ there es an isomorphism
    \begin{equation*}
        \phi\colon v^\perp \cong \Pic(X)
    \end{equation*}
    which respects $\bbf{-}{-}$.\par
    A classical example, which is also the most important one for us, arises as follows: For a general ample polarization $H\in\Pic(S)$ the moduli space of Gieseker-stable sheaves, see \cite[Section 1.2, Section 4]{HuybrechtsLehnSheaves} for definitions and constructions,
    \begin{equation*}
        M_H(v) = \{E\in \textup{Coh}(S)\,|\, E \text{ is Gieseker-stable and }v(E) = v\}
    \end{equation*}
    is isomorphic to a Bridgeland moduli space.
\end{construction}

The benefit of dealing with the more general Bridgeland moduli spaces is the fact that the whole birational behaviour is well understood by the work of Bayer--Macr\`{i} \cite{bayer2014mmp}.
\begin{remark}
\label{ex:BayerMacri}
    In the setup above, the effective, movable and nef cone is determined as follows, see \cite[Section 12]{bayer2014mmp}: Choose an ample line bundle $A\in v^\perp$ in the orthogonal complement and denote by $\Pos(X) \subset \Pic(X)_\RR$ the component of the positive cone that has positive Beauville-Bogomolov-Fujiki intersection with $A$. Then the \emph{effective cone} $\Eff(X)$ is generated by 
    \begin{itemize}
        \item $\Pos(X)$,
        \item $\phi(s)$ for any class $s\in v^\perp$ with $\bbf{s}{s} = -2$ and $\bbf{s}{A}> 0$,
        \item $\phi\bigl(\bbf{v}{v}w - \bbf{v}{w}v\bigr)$ for classes $w\in \Halg(S)$ with $\bbf{w}{w} = 0$ and $\bbf{w}{v} = 1,2$. 
    \end{itemize}
    The \emph{movable cone} $\Mov(X)$ is dual to the effective cone and is consequently cut out in $\Pos(X)$ by the hyperplanes
    \begin{itemize}
        \item $\phi(s^\perp\cap v^\perp)$ for any class $s\in v^\perp$ with $\bbf{s}{s} = -2$ and $\bbf{s}{A}> 0$,
        \item $\phi(w^\perp\cap v^\perp)$ for classes $w\in \Halg(S)$ with $\bbf{w}{w} = 0$ and $\bbf{w}{v} = 1,2$. 
    \end{itemize}
    The \emph{nef cone} $\Nef(X)$ on the other hand is cut out in $\Pos(X)$ by
    \begin{itemize}
        \item $\phi(a^\perp\cap v^\perp)$ for all $a\in \Halg(S)$ with $\bbf{a}{a}\ge -2$ and $0\le \bbf{a}{v}\le 2$.
    \end{itemize}
    Moreover, the description in \textit{loc.cit.} shows that the nef cone is cut out from the movable cone by one hyperplane for each flopping contraction.
\end{remark}
We continue with two examples that carry exceptional subvarieties whose structure delivers some information on the K3 surface $S$ that we started with.
\begin{example}
    \label{ex:CotangentBundle}
    Let $X = S^{[2]}$ be the Hilbert scheme of length $2$ for a K3 surface $S$. This is a Hyperkähler fourfold isomorphic to $M(1,\oO_S,-1)$ and the Picard lattice is closely related to that of $S$, that is $\Pic(X) = \Pic(S)\oplus \frac{1}{2}E$, where $E$ is a prime divisor with $\bbf{E}{E} = -8$. The latter divisor $E$ arises as the exceptional divisor of the Hilbert-Chow morphism $S^{[2]}\to S^{(2)}$, which yields the isomorphism $E \cong \PP(\Omega_S)$ and the Hilbert-Chow morphism restricts to the projection $\PP(\Omega_S)\xrightarrow{p} S$. The corresponding morphism is induced by a big and nef line bundle $A^{[2]} = \phi(0,-A, 0)$ for $A\in \Amp(S)$. Moreover we have that the restrictions satisfy
    \begin{align*}
        A^{[2]}|_E &= 2\Tilde{A}\\
        E|_E &= -2L,
    \end{align*}
    where $\Tilde{A} = p^*A$ and $L = \oO_{\PP(\Omega)}(1)$.
\end{example}

We next turn our example to other $\mu$-stable bundles. It turns out that their projectivization can be embedded into Hyperkähler manifolds in many cases.
\begin{construction}[see {\cite[Example 8.1.7]{HuybrechtsLehnSheaves}}]
    \label{ex:MukaiBundles}
    Let $S$ be a K3 surface, $H\in \Pic X$ ample and $\fF$ a $\mu_H$-stable locally free sheaf  with Mukai vector $v = v(\fF) = (r, D, c)\in \Halg(S)$.
    We want to construct a closed embedding
    \begin{equation*}
        \PP_S(\fF) \to M_H(r,D,c-1).
    \end{equation*}
    For any point $s\in S$ and $z\in \PP(\fF(s))$ we define
    \begin{equation*}
        F_z = \ker (\fF\to \fF(s) \xrightarrow{z} k(s)).
    \end{equation*}
    Then $F_z\in M_H(r,D,c-1)$ is stable and 
    \begin{align*}
        \PP_S(\fF) &\to X  \qquad z \mapsto F_z
    \end{align*} 
    is indeed an embedding. \par 
    The moduli space $M_H(r,D,c-1)$ admits a birational contraction morphism 
    \begin{equation*}
    \label{eq:contraction}
        \textup{cont}\colon M_H(r,D,c-1) \to M^{\mu ss},
    \end{equation*}
    where $M^{\mu ss}$ is the Donaldson-Uhlenbeck compactification of the space of $\mu_H$-stable locally free sheaves, see \cite[Section 8.2]{HuybrechtsLehnSheaves} for details. An analysis of the contraction reveals that the contraction restricts and factorizes as $$\textup{cont}|_{\PP(\fF)}\colon \PP_S(\fF)\to S\subset M^{\mu ss},$$
    where the first map is the usual projection $p\colon \PP_S(\fF)\to S$.\par
    Assume now, that $(r,D, c-1)$ is primitive in $\Halg(S)$, e.g.\ $D$ primitive. By changing the polarization $H$ slightly, such that $H$ does not lie on a wall, we can assume that $M_H(r,D,c-1)$ is a Hyperkähler variety of dimension $2r+2+v^2$. The birational morphism to the Donaldson--Uhlenbeck compactification is induced by $\phi(0, rH, H.D) \in \Pic(X)$ for any ample $H\in \Pic(S)$. Moreover calculating the restrictions of line bundles we get that for  any vector $(r',D',c')\in (r,D,c-1)^\perp$ 
    \begin{align*}
        \phi(r',D',c')|_{\PP(\fF)} = -r'L +p^*D',
    \end{align*}
    where $L = \oO_{\PP(\fF)}(1)$.
\end{construction}

\begin{remark}
    To prove the Lefschetz-type theorems in the last section the codimension of the subvariety had to be strictly less than half of the dimension of the Hyperkähler variety to be able to apply  Verbitsky's vanishing theorem. Therefore we restrict in our further analysis to the case of $\mu_H$-stable bundles with $v(\fF)^2=-2$.
\end{remark}

One prominent example of stable bundles arises as Lazarsfeld--Mukai bundles which play a fundamental role in Brill-Noether theory, see \cite{LazarsfeldBrillNoether}. We recall the basic construction.
\begin{example}
    Let $S$ is a K3 surface of Picard rank $1$, that is $\Pic(S) = \ZZ H$, and $C\in |H|$ a smooth curve of genus $g$. Suppose furthermore that we are given a globally generated line bundle $A\in \Pic(C)$ of degree $d$ such that $A^\vee \otimes \oO_C(H)$ is globally generated as well. Then with $r = h^0(C,A)-1$ we get an exact sequence
    \begin{equation*}
        0 \to F_{C,A}\to \oO_X^{\oplus r+1} \to A \to 0
    \end{equation*}
    The \emph{Lazarsfeld--Mukai-bundle} is defined to be the dual $E_{C,A} = F_{C,A}^\vee$. This bundle has Mukai vector $v = (r+1, H, r-d+g)$. It turns out that this bundle is $\mu_H$-stable, see \cite[Prop.\ 9.3.3]{huybrechts2016lectures}.
    If we define $\rho(r,g,d) = g -(r+1)(g-d+r)$ then $v(E_{C,A})^2 = 2\rho(r,g,d)-2$. 
\end{example}

It turns out that another geometric construction of rigid bundles arises when considering the restriction of the cotangent bundle of projective space under some embedding:
\begin{example}
\label{ex:cotangentprojectivespaceonK3}
    Let $S$ be a K3 surface with $\Pic(S) = \ZZ H$ with ample, globally generated divisor $H$ of degree $2d = H.H$. Then, the kernel $M_H$ of the evaluation map $H^0(S, H)\otimes \oO_S \to H$ is $\mu_H$-stable. Moreover $v(M_H) = (d+1, -H, 1)$ satisfies $v(M_H)^2 = -2$ and for the morphism $S\to \PP^{N}$ induced by $H$ it holds that
    \begin{equation*}
        \Omega_{\PP^N}|_S = M_H \otimes H^\vee.
    \end{equation*}
\end{example}

\begin{remark}
\label{rem:AmplesOnExceptional}
    In both examples above, for any ample divisor $A\in \Pic(S)$ there exists a big and nef divisor $L_A$ which induces the morphism to the Uhlenbeck compactification and restricted to the exceptional set $E \to S$ is the pullback of a positive multiple of $A$, see \cite[Section 8.2 and Theorem 8.2.8]{HuybrechtsLehnSheaves}.
\end{remark}

To compute the effective cone of some of the exceptional subvarieties mentioned above we need to introduce the following notation.
\begin{notation}
Let $\mathcal{C}\subset N^1(X)$ be any subset. Then we define
\begin{equation*}
    \mathcal{C}_{E\ge 0} = \{D\in C\,|\, \bbf{D}{E} \ge 0\}.
\end{equation*}
\end{notation}
This notation will most often be used with the effective and movable cones $\mathcal{C} = \Eff(X), \Mov(X)$.

We are now ready to prove the second main theorem \Cref{mainthm2} in a more general version, which is applicable to rank $2$ bundles as above.
\begin{theorem}
\label{thm:Eff4Fold}
    Let $S$ be a K3 surface and $H\in \Pic(S)$ ample. 
    \begin{itemize}
        \item Let $\fF$ be a $\mu_H$-stable rigid rank $2$ vector bundle with $v(\fF) = (2,D,c)$ such that $v' = (2,D,c-1)$ is primitive. Then choose an $H'\in \Pic(S)$ nearby $H$ such that $H'$ does not lie on a wall. Set
        \begin{equation*}
            X = M_{H'}(2,D,c-1)\supset \PP(\fF) = E,
        \end{equation*}
        or
        \item $X = S^{[2]}$ the Hilbert scheme and $E = \PP(\Omega_S)$.
    \end{itemize} 
    Suppose that $X$ does not admit any flopping contractions. Then the restriction morphism induces a bijection
    \begin{equation*}
        \Eff(E) \cong \Eff(X)_{E\ge 0}.
    \end{equation*}
\end{theorem}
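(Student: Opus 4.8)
The plan is to reduce this statement, as far as possible, to the complete-intersection case handled in \Cref{mainthm1} by exploiting the fact that $E$ is \emph{almost} a complete intersection: it is an effective divisor whose restriction to itself is the pullback of an anti-ample class under the contraction $E\to S$. Concretely, write $X\to\overline X$ for the contraction to the Uhlenbeck (or Hilbert--Chow) compactification, induced by the big and nef divisor $L_A$ of \Cref{rem:AmplesOnExceptional}, so that $E$ lies in the exceptional locus of this morphism and $L_A|_E = p^*(kA)$ for some $k>0$. The first step is to record exactly which line bundles on $X$ restrict to which line bundles on $E=\PP_S(\fF)$: by the formulas in \Cref{ex:CotangentBundle} and \Cref{ex:MukaiBundles} one has $\phi(r',D',c')|_E = -r'L + p^*D'$ with $L=\oO_E(1)$, and $E|_E = -2L$ (Hilbert--Chow) or the analogous negative multiple of $L$ in general. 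Since $\Pic E \cong \ZZ L \oplus p^*\Pic S$ and $\Pic X \cong \ZZ\tfrac12 E \oplus \Pic S$ (as lattices, via $\phi$), the restriction map $\Pic X \to \Pic E$ is an isomorphism, and the condition $\bbf{D}{E}\ge 0$ translates precisely into ``the coefficient of $L$ in $D|_E$ is $\ge 0$'', i.e.\ into $D|_E$ being pseudo-effective along the $L$-axis in the obvious sense. This identifies $\Eff(X)_{E\ge0}$ with a half-space condition on $\Pic E$ and reduces the theorem to two inclusions.

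For the inclusion $\Eff(X)_{E\ge 0}|_E \subset \Eff(E)$: take $D\in N^1(X)$ with $\bbf{D}{E}\ge0$; by the Zariski decomposition \Cref{thm:ZariskiDecomposition} it suffices to treat a movable divisor $M$ and the exceptional divisor $E$ separately (note any other exceptional prime divisor $E'$ has $\bbf{E'}{E}$ of a sign we must control, but since $X$ has no flops its only contraction is $X\to\overline X$, so $E$ is the \emph{only} exceptional prime divisor, and any movable $M$ is nef up to a birational model which is an isomorphism in large codimension). For a movable $M$, argue as in the proof of \Cref{mainthm1}: on a small-modification model $M$ becomes big and nef, its stable base locus misses a large-codimension set, and since $\dim E = 3 < \tfrac12\cdot 4$ in the fourfold case we can use Verbitsky vanishing (\Cref{thm:Verbitsky}) applied to a locally-free resolution of $I_E\otimes M^N$ — here $E$ \emph{is} a divisor so $I_E = \oO(-E)$ is already locally free and the argument is cleaner: $H^0(X,\oO(-E)\otimes M^N) = 0$ and $H^1$ vanishes for $N\gg0$, giving $M^N|_E$ effective, hence $M|_E$ pseudo-effective. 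For the class $E$ itself, $E|_E = -2L$ (say), which is manifestly on the boundary of $\Eff(E)$ since $\bbf{E}{E}<0$ but $\bbf{D}{E}\ge 0$ forces the coefficient to be exactly on the boundary; one checks $-2L$ is indeed a limit of effective classes on $\PP_S(\fF)$ because $L$ itself is not effective (as $\fF$ has no sections in the relevant twist) while any $-L + p^*(\text{ample})$ is effective — this last point uses $\mu_H$-stability of $\fF$ to control $H^0(\PP_S(\fF),\oO(1)\otimes p^*(\cdot))= H^0(S,\fF\otimes(\cdot))$.

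For the reverse inclusion $\Eff(E)\subset \Eff(X)_{E\ge0}|_E$: the containment in the half-space $\{\bbf{-}{E}\ge0\}$ is the \emph{easy geometric} direction — any effective divisor $\Delta$ on $E$ has $\Delta\cdot(\text{fibre of }p)\ge0$, and the fibre class on $E$ is proportional to the class dual to $E$ under restriction (because $E|_E$ is a negative multiple of $L$ and fibres are the curves contracted by $p$), so $\bbf{\tilde D}{E}\ge0$ for the unique $\tilde D\in N^1(X)$ with $\tilde D|_E = \Delta$. The genuine content is showing $\tilde D\in\Eff(X)$, i.e.\ that an effective class on $E$ lifts to an effective (equivalently, by Boucksom duality \Cref{cor:EffMovDual}, to a class non-negative on $\Mov(X)$) class on $X$. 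Here I would use the explicit description of $\Mov(X)$ and $\Eff(X)$ from \Cref{ex:BayerMacri}: since $X$ has no flopping contractions, $\Nef(X)=\Mov(X)$, and the walls of $\Eff(X)$ are governed by the $(-2)$-classes $s$ and the isotropic $w$'s; the claim becomes that the wall of $\Eff(E)$ (generated by $L$ and whatever the other extremal ray is, computable from $\fF$'s Mukai vector and stability) maps \emph{onto} the relevant wall of $\Eff(X)_{E\ge0}$ under the inverse restriction. The main obstacle — and the place where the no-flops hypothesis is essential — is precisely this surjectivity: a priori $\Eff(E)$ could be \emph{strictly larger} than $\Eff(X)_{E\ge0}|_E$, which would mean $E$ carries an effective divisor whose lift to $X$ is not effective, exactly the Anella--H\"oring phenomenon quoted in the introduction. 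Ruling that out requires showing that the extremal rays of $\Eff(E)$ other than $\RR_{\ge0}L$ are themselves restrictions of extremal effective classes on $X$ — concretely, that the bound $\inf\{m/n : H^0(\Sym^n\fF^\vee\otimes\oO(mH))\ne0\}$ is \emph{exactly} the slope predicted by the Bayer--Macr\`i wall, with no smaller value occurring; when there are no flops the only candidate destabilizing sub-objects of the relevant Mukai vector are the ones producing $E$ itself, and a vanishing argument (again via Verbitsky applied on $X$ to twists of the locally free resolution of $\oO_E$) shows no effective divisor can cross that wall. I expect this wall-matching step to be the technical heart of the proof.
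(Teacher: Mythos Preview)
Your proposal has the right ingredients but deploys them in the wrong places, and this leads to genuine gaps.

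\textbf{The reverse inclusion $\Eff(E)\subset\Eff(X)_{E\ge0}|_E$ is where Verbitsky is used, and it is one line.} Under the no-flops hypothesis one has $\Nef(X)=\Mov(X)$; hence by the duality of \Cref{cor:EffMovDual}, for every $D\notin\Eff(X)$ there is a \emph{nef} class $M$ with $\bbf{M}{D}<0$, which after a small perturbation is ample. Now \Cref{prop:NotEffective} (the ample case, with $E$ a codimension-$1$ complete intersection so $c=1<n=2$) gives $D|_E$ not $\QQ$-effective. That is the whole argument --- no Bayer--Macr\`{i} wall-matching is needed, and the ``technical heart'' you anticipate does not exist. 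Your closing sentence about Verbitsky applied to twists of $\oO_E$ is exactly this, but you have buried it under an unnecessary plan.

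\textbf{The forward inclusion $\Eff(X)_{E\ge0}|_E\subset\Eff(E)$ is elementary, but your argument for it is broken.} For a movable $M$ the restriction $M|_E$ is effective simply because the base locus of $M$ has codimension $\ge 2$ and so cannot contain the divisor $E$; your proposed cohomological route via $H^0(X,M^N(-E))=0$ is false in general (for $M$ big this group is nonzero once $N\gg0$). For an exceptional prime $E'\neq E$ the restriction is effective because $E\not\subset E'$ --- and your claim that no such $E'$ exists (``$E$ is the only exceptional prime divisor'') does not follow from the no-flops hypothesis. Your treatment of $E$ itself is both irrelevant (since $\bbf{E}{E}<0$, the class $E$ is \emph{not} in $\Eff(X)_{E\ge0}$) and wrong: $E|_E=-2L$ restricts to $\oO(-2)$ on each fibre of $p$, so it is not pseudo-effective, contrary to what you assert.

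\textbf{You are also missing a case.} The cone $\Eff(X)_{E\ge0}$ is generated by $\Mov(X)$, the exceptional primes $E'\neq E$, \emph{and the boundary face} $\Eff(X)_{E=0}$, and this last piece needs its own argument. A class $D$ with $\bbf{D}{E}=0$ restricts to a pullback $p^*D'$ from $S$, and one must check $D'$ is effective on $S$. The paper does this by observing that the big nef class $L_A$ of \Cref{rem:AmplesOnExceptional} satisfies $\bbf{L_A}{D}=A\cdot D'$; if $D'$ were not effective one could choose an ample $A$ on $S$ with $A\cdot D'<0$, contradicting $D\in\Eff(X)$.

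Finally, the inequality ``$\dim E=3<\tfrac12\cdot 4$'' is false; the hypothesis you need for \Cref{thm:Verbitsky} is $c=\codim_X E=1<n=2$.
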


\begin{proof}
    The restriction morphism on the Picard group induces a bijection
    \begin{equation*}
        \Pic(X)_\QQ \to \Pic(E)_\QQ.
    \end{equation*}
    We first show that any divisor $D\in \Pic(X)$ with $\bbf{D}{E} < 0$ is not effective. But to show this it suffices to show that all divisors with $\bbf{D}{E} = 0$ are not big which is immediate as these divisors are pull backs under the map $E = \PP(\fF) \to S$.

    Therefore by \Cref{prop:NotEffective} we only need to show that the restriction of any divisor in $\Eff(X)_{E\ge 0}$ is again pseudo-effective. But the latter cone is generated by exceptional divisors $E'\neq E$, the movable cone $\Mov(X)$ and $\Eff(X)_{E=0}$. The claim for the first two cones is immediate so we are left with effective divisors $D\in \Pic(X)$ such that $\bbf{D}{E} = 0$. Then $D|_E$ is a pullback of a divisor $D'\in \Pic(S)$ on $S$ and we want to show that $D'$ is effective. Suppose this is not the case, then there exists an ample $A\in \Pic(S)$ such that $A.D' < 0$ but on the other hand the big and nef divisor $L_A$ from \Cref{rem:AmplesOnExceptional} satisfies $\bbf{L_A}{D} = A.D' < 0$ and therefore $D$ cannot be effective.
\end{proof}

The strategy in the higher dimensional cases will have to be altered: The bundle $\PP_S(\fF)$ is of higher codimension in the Hyperkähler variety. As this space is also contracted it will then be induced by a flopping contraction, in contrast to the divisorial contraction before in the rank $2$ case. We start with a subsection that analyses the geometry of these contractions.
\subsubsection*{Exceptional divisors are effective on $\PP(\fF)$}
The main tool we use are symplectic varieties in the sense of \cite{KaledinSymplecticOverview}. The main theorem regarding these varieties is the following.
\begin{theorem}[{Kaledin \cite[Theorem 2.3 and Theorem 2.5]{kaledinSymplecticPoisson}}]
    \label{thm:kaledinsymplectic}
    Let $X$ be a symplectic variety. Then there exists a stratification by locally closed subschemes $X_i$ that are symplectic and smooth. Moreover the normalizations of their closures are symplectic varieties as well.
\end{theorem}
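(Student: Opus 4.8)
Since the result is Kaledin's, the plan is to recall how his Poisson-theoretic argument proceeds. First I would observe that a symplectic variety $X$ is in particular a \emph{Poisson scheme}: the symplectic form $\sigma$ on the smooth locus $X_{\mathrm{sm}}$ dualizes to a Poisson bivector $\Theta\in H^0(X_{\mathrm{sm}},\bigwedge^2 T_{X_{\mathrm{sm}}})$, and since $X$ is normal with $\codim_X(X\setminus X_{\mathrm{sm}})\ge 2$, the associated bracket on $\oO_{X_{\mathrm{sm}}}$ extends uniquely (by reflexivity) to a Poisson bracket on $\oO_X$. The stratification $\{X_i\}$ is then cut out by the symplectic foliation of $\Theta$: each $X_i$ is a connected component of the locally closed locus on which $\Theta$ has a fixed rank, the dense open stratum being $X_{\mathrm{sm}}$ itself. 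Each such piece carries a nondegenerate closed $2$-form (the leafwise inverse of $\Theta$), so the real content is to show that the $X_i$ are \emph{smooth} and finite in number, and that the normalizations of their closures are again symplectic varieties.

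Next I would set up the recursive engine. The singular locus $\operatorname{Sing}(X)$ is a Poisson subscheme of $X$, and the crucial point is that the normalization $Z$ of any irreducible component of $\operatorname{Sing}(X)$ is itself a symplectic variety. Granting this, one argues by Noetherian induction on $\dim X$: since $\dim Z<\dim X$, the induction hypothesis furnishes a finite stratification of $Z$ into smooth symplectic leaves whose closures normalize into symplectic varieties; pushing these forward along $Z\to\operatorname{Sing}(X)\hookrightarrow X$ and adjoining $X_{\mathrm{sm}}$ produces the stratification of $X$. Finiteness is automatic because the dimension strictly drops at each step, and the statement about closures is built into the inductive hypothesis. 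One still has to match this combinatorially defined stratification with the rank stratification of $\Theta$ and check that the leafwise symplectic forms agree with those coming from the inductive data, but that is bookkeeping once the local picture below is available.

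The hard part is precisely the claim that the normalization of a component $Z$ of $\operatorname{Sing}(X)$ is symplectic, and this rests on a local structure theorem at the generic point of $Z$: the formal (equivalently, analytic) germ of $X$ there splits as a product $\widehat{Z}\times T$, where $T$ is the germ at its vertex of a \emph{symplectic cone} --- a symplectic variety carrying a contracting $\CC^{*}$-action with an isolated fixed point --- and the symplectic form decomposes as $\sigma=\sigma_Z\oplus\sigma_T$. Restricting to $\widehat{Z}\times\{0\}$ yields the symplectic form on the smooth locus of $Z$, and since $T$ is a symplectic singularity of strictly smaller dimension it too can be fed back into the induction. The splitting itself is a Darboux--Weinstein-type statement for the extended Poisson structure: one trivializes the leafwise-nondegenerate part of $\Theta$ along $Z$ and controls the transverse deformation via the vanishing of the relevant formal Poisson cohomology, using that symplectic varieties have rational Gorenstein (indeed canonical) singularities, by results of Beauville and Namikawa, to keep $T$ within the class of symplectic singularities and to see that $Z$ inherits the required singularities. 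I expect this transverse Darboux splitting --- the formal rigidity of the Poisson structure normal to a stratum --- to be the main technical obstacle; everything else is Noetherian induction together with local-to-global gluing of the leafwise forms.
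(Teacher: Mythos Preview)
The paper does not give its own proof of this theorem: it is stated with attribution to Kaledin and used as a black box, the only further comment being the remark that the stratification is the iterated smooth--singular stratification. Your sketch is a faithful outline of Kaledin's actual argument (extension of the Poisson structure over the singular locus by normality, the formal product decomposition $\widehat{Z}\times T$ at a generic point of a component $Z$ of $\operatorname{Sing}(X)$ with $T$ a transverse symplectic slice, and Noetherian induction on dimension), so there is nothing substantive to contrast with the paper itself. If anything, your description of the stratification as the rank stratification of the Poisson bivector is slightly indirect: in Kaledin's presentation the strata are defined recursively as the smooth loci of the successive singular loci, and the fact that these coincide with the symplectic leaves is a consequence rather than the definition---which is exactly the content of the paper's remark.
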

\begin{remark}
    From \cite[Proof of Proposition 3.1]{kaledinSymplecticPoisson} the stratification is the smooth-singular stratification.
\end{remark}
The following sheds some light on this stratification in the case that there is a crepant resolution by a Hyperkähler variety.
\begin{lemma}
\label{lem:SingularLocusContraction}
Let $f\colon X\to \overline{X}$ be a birational projective morphism of a projective Hyperkähler variety of dimension $2n$ to a normal variety. Suppose that $f$ contracts only  a prime divisor $E$. Then $f(E)= \textup{Sing}(\overline{X})$.
\end{lemma}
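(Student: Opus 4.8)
The plan is to prove the two inclusions $\operatorname{Sing}(\overline X)\subseteq f(E)$ and $f(E)\subseteq\operatorname{Sing}(\overline X)$ separately. I read the hypothesis ``$f$ contracts only a prime divisor $E$'' as saying that the exceptional locus is exactly $\operatorname{Exc}(f)=E$, with no small (codimension $\ge 2$) exceptional part; without this reading the statement is false, since over the image of a small exceptional component $\overline X$ can still be singular. The first inclusion is then formal: over $\overline X\setminus f(E)$ the morphism $f$ is quasi-finite and proper, hence finite, hence --- as $\overline X$ is normal and $f$ is birational --- an isomorphism by Zariski's main theorem; consequently $\overline X\setminus f(E)$ is isomorphic to an open subset of the smooth variety $X$ and is therefore smooth, which gives $\operatorname{Sing}(\overline X)\subseteq f(E)$.

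For the reverse inclusion I would argue by contradiction. Suppose $\overline X$ is smooth at some point $p\in f(E)$; choose an open neighbourhood $V\ni p$ on which $\overline X$ is smooth and set $W=f^{-1}(V)$, an open, hence smooth, subset of $X$ with $\omega_W\cong\mathcal O_W$ since $\omega_X\cong\mathcal O_X$. The induced map $g=f|_W\colon W\to V$ is a projective birational morphism of smooth varieties whose exceptional locus $\operatorname{Exc}(g)=E\cap W$ is the irreducible open part of $E$ meeting $f^{-1}(p)$, so in particular it is nonempty and of pure codimension one. The key object is the relative dualizing line bundle $\omega_{W/V}:=\omega_W\otimes g^{*}\omega_V^{\vee}$. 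Since $g$ is an isomorphism off $E\cap W$, this line bundle is trivial off $E\cap W$, so $\omega_{W/V}\cong\mathcal O_W\!\bigl(a\,(E\cap W)\bigr)$ where $a$ is the discrepancy of $E$ over the smooth variety $V$, and $a\ge 1$. On the other hand $\omega_W\cong\mathcal O_W$ gives $\omega_{W/V}\cong g^{*}\omega_V^{\vee}$, a line bundle pulled back from $V$, hence $g$-numerically trivial (indeed trivial on every fibre of $g$). Thus $a\,(E\cap W)$ is an effective, $g$-exceptional divisor whose associated line bundle is $g$-numerically trivial, and the negativity lemma (a $g$-exceptional, $g$-numerically trivial divisor vanishes) forces $a\,(E\cap W)=0$, contradicting $a\ge 1$. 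Hence $\overline X$ is singular at every point of $f(E)$.

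The step I expect to be the main obstacle is this second inclusion, together with the need not to argue too hastily: from $\omega_{W/V}\cong\mathcal O_W$ one may \emph{not} conclude that the effective divisor $a\,(E\cap W)$ is zero, because $W$ is not proper and there an effective divisor can be linearly (indeed principally) trivial. The negativity lemma --- invoked through the fact that $\omega_{W/V}$ is a pullback and hence $g$-numerically trivial, which is precisely where the positivity of discrepancies over smooth points is used --- is what closes this gap; as a sanity check I would test the normalisation $\omega_{W/V}\cong\mathcal O_W(a\,(E\cap W))$ with $a\ge 1$ against the blow-up of a smooth variety along a smooth centre. Alternatively, the argument can be routed through the symplectic picture of \Cref{thm:kaledinsymplectic}: the form $\sigma$ descends across $f$ and makes $\overline X$ a symplectic variety whose Kaledin stratification is the smooth--singular one, so one only needs that $f(E)$ meets no open stratum, which again reduces to $g$ contracting the divisor $E\cap W$.
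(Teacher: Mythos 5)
Your proof is correct, and both inclusions are handled soundly: the easy one ($\operatorname{Sing}(\overline X)\subseteq f(E)$) by properness plus Zariski's main theorem, the hard one by the crepancy contradiction. The underlying idea of the second inclusion is the same as in the paper --- a birational morphism with $\omega_X\cong\mathcal O_X$ cannot contract a divisor onto a locus of smooth points --- but your technical route differs in a useful way. The paper works globally over the whole smooth locus $\overline X^{\mathrm{sm}}$: it invokes semismallness to get $\operatorname{codim} f(E)=2$, deduces $K_{\overline X^{\mathrm{sm}}}=0$, and then argues that the nonzero map on determinants $f^*\omega_{\overline X^{\mathrm{sm}}}\to\omega_{X'}$ is an isomorphism, forcing $\Omega_f=0$ and contradicting the contraction of fibres. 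You instead localize at a hypothetical smooth point $p\in f(E)$, write $\omega_{W/V}\cong\mathcal O_W(a(E\cap W))$ with $a\ge 1$ (discrepancy over a smooth base), note $\omega_{W/V}\cong g^*\omega_V^\vee$ is $g$-numerically trivial, and kill $a(E\cap W)$ by the negativity lemma. This buys two things: you do not need semismallness or the triviality of $K_{\overline X}$ at all, and you deal cleanly with exactly the subtle point that the paper's determinant step passes over quickly --- on the non-proper open set a nonzero section of a trivial line bundle (the Jacobian) could a priori vanish along the exceptional divisor, and one needs either relative numerical triviality plus negativity (your route) or $f_*\mathcal O_X=\mathcal O_{\overline X}$ together with $\operatorname{codim} f(E)\ge 2$ to exclude this. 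You also make explicit the easy inclusion, which the paper leaves implicit, and you correctly flag that the statement presupposes $\operatorname{Exc}(f)=E$ with no small exceptional components.
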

\begin{proof}
We know that $\dim f(E) = 2n-2$ as the map $f$ is semi-small, see \cite[Lemma 2.11]{kaledinSymplecticPoisson}. Suppose $f(E)\not\subset \textup{Sing}(\overline{X})$. Denote by $X' \subset X$ the preimage of $\overline{X}^{sm}$. Then $X' \to \overline{X}^{sm}$ is birational, but contracts a codimension one subset. On the other hand $K_{\overline{X}^{sm}} = 0$ as the space $\overline{X}$ is normal and $f(E)$ has codimension $2$. Thus, this is a crepant morphism between smooth quasi-projective varieties and therefore must be an isomorphism: The short exact sequence 
\begin{equation*}
    f^*\Omega_{\overline{X}^{sm}} \to \Omega_{X'} \to \Omega_f \to 0
\end{equation*}
induces a map on the determinants of the first two locally free sheaves. But both determinants are isomorphic, thus the map on determinants is an isomorphism, as it is non-zero at the locus over which $f$ is an isomorphism. Therefore $f^*\Omega_{\overline{X}} \to \Omega_X$ is an isomorphism as well and it follows that $\Omega_f = 0$. However, this contradicts that the map contracts fibers, as the base change of $\Omega_f$ to a non trivial fiber via a map $\{x\}\to \overline{X}^{sm}$ is non trivial.
\end{proof}

We start with remarking the following two facts on normalizations.
\begin{lemma}
    Let $X\to Y$ be a finite birational morphism between projective varieties. Then there is a factorization of the normalization morphism $\Tilde{Y}\to Y$ as $\Tilde{Y}\to X\to Y$.
\end{lemma}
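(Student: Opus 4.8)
The plan is to realise the normalisation as a relative spectrum and to place $f_*\oO_X$ between $\oO_Y$ and the integral‑closure sheaf. Write $f\colon X\to Y$ for the given morphism. Since $f$ is finite it is affine, so $X=\mathbf{Spec}_Y\mathcal{B}$ for the coherent sheaf of $\oO_Y$‑algebras $\mathcal{B}:=f_*\oO_X$, and finiteness says exactly that $\mathcal{B}$ is integral over $\oO_Y$. For an irreducible open $V\subseteq Y$ the preimage $f^{-1}(V)$ is a nonempty (as $f$ is surjective) open subset of the integral scheme $X$, hence irreducible, so $\mathcal{B}(V)=\oO_X(f^{-1}(V))$ is a domain with fraction field $K(X)$; since $f$ is birational, $K(X)=K(Y)$, and the inclusion $\oO_Y(V)\hookrightarrow\mathcal{B}(V)$ identifies $\mathcal{B}$ with a sheaf of $\oO_Y$‑subalgebras of the constant sheaf $\mathcal{K}_Y$ of rational functions on $Y$, so that $\oO_Y\subseteq\mathcal{B}\subseteq\mathcal{K}_Y$.

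Next I would recall that the normalisation $\nu\colon\Tilde{Y}\to Y$ is $\mathbf{Spec}_Y\Tilde{\oO}_Y$, where $\Tilde{\oO}_Y\subseteq\mathcal{K}_Y$ is the sheaf of $\oO_Y$‑algebras whose sections over an affine open $\operatorname{Spec} R\subseteq Y$ are the integral closure of $R$ in $K(Y)$. Since $\mathcal{B}$ is a sheaf of rings contained in $\mathcal{K}_Y$ and integral over $\oO_Y$, each of its local sections is an element of $K(Y)$ integral over the corresponding local ring of $Y$, whence $\mathcal{B}\subseteq\Tilde{\oO}_Y$ inside $\mathcal{K}_Y$, compatibly with the algebra structures. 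Applying $\mathbf{Spec}_Y(-)$ to the chain $\oO_Y\subseteq\mathcal{B}\subseteq\Tilde{\oO}_Y$ then yields $Y$‑morphisms $\Tilde{Y}\to X\xrightarrow{f}Y$; the composite corresponds to $\oO_Y\subseteq\Tilde{\oO}_Y$, hence equals $\nu$, which is the asserted factorisation. I expect the only step meriting care is the embedding $\mathcal{B}\hookrightarrow\mathcal{K}_Y$, i.e.\ the observation that the preimage under $f$ of an irreducible open is again irreducible, so that the sections of $f_*\oO_X$ form a subring of $K(Y)$ rather than a larger product of fields; once that is in place, everything is formal.

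An alternative, even shorter route is to invoke the universal property of normalisation. Let $\mu\colon\Tilde{X}\to X$ be the normalisation of $X$. Then $\Tilde{X}$ is normal and $f\circ\mu$ is dominant, so it factors uniquely as $\Tilde{X}\to\Tilde{Y}\xrightarrow{\nu}Y$. Because $f$ is finite and birational, $f$ carries the normalisation of $X$ isomorphically onto that of $Y$ — concretely $f_*\Tilde{\oO}_X$ and $\Tilde{\oO}_Y$ are both the integral closure of $\oO_Y$ in $\mathcal{K}_Y$, using $\oO_Y\subseteq f_*\oO_X\subseteq f_*\Tilde{\oO}_X$ with all inclusions integral and $f_*\Tilde{\oO}_X$ integrally closed in $\mathcal{K}_Y$ — so $\Tilde{X}\to\Tilde{Y}$ is an isomorphism over $Y$. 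Composing its inverse with $f\circ\mu$ gives the factorisation $\Tilde{Y}\cong\Tilde{X}\xrightarrow{\mu}X\xrightarrow{f}Y$, which by the uniqueness clause is the normalisation morphism of $Y$.
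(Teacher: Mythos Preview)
Your proposal is correct. Your second, ``alternative'' route is precisely the paper's argument: one passes to the normalisation $\Tilde{X}$, obtains a finite birational map $\Tilde{X}\to\Tilde{Y}$ between normal varieties, and concludes it is an isomorphism. The only difference is cosmetic: the paper invokes Zariski's main theorem for this last step, whereas you argue directly that $f_*\Tilde{\oO}_X$ and $\Tilde{\oO}_Y$ coincide as the integral closure of $\oO_Y$ in $\mathcal{K}_Y$. Your first approach, by contrast, is a genuinely more direct route that the paper does not take: rather than detouring through $\Tilde{X}$, you embed $f_*\oO_X$ between $\oO_Y$ and $\Tilde{\oO}_Y$ inside $\mathcal{K}_Y$ and apply $\mathbf{Spec}_Y$ to the resulting chain. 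This avoids any appeal to Zariski's main theorem and makes the factorisation visible at the level of sheaves of algebras; it is arguably the cleaner argument here, though the paper's version has the advantage of exhibiting the isomorphism $\Tilde{X}\cong\Tilde{Y}$ explicitly, which is itself a useful byproduct.
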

\begin{proof}
    As $X\to Y$ is dominant, we get a finite birational map $\Tilde{X}\to \Tilde{Y}$ between normal varieties. therefore by Zariski's main theorem this is an isomorphism.
\end{proof}
\begin{lemma}
\label{lem:normalization}
Let $X\to Y$ be a morphism from a normal irreducible projective variety to a projective variety, such that the smooth locus $U\subset Y$ satisfies $f(X)\cap U\neq \varnothing$. Then there is a factorization $X\to \Tilde{Y}\to Y$ through the normalization $\Tilde{Y}$.
\end{lemma}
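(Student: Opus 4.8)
The plan is to reduce to the previous lemma by exhibiting a finite birational morphism through which $X \to Y$ factors, and then invoking the universal property of normalization. Concretely, let $Z = \overline{f(X)}$ be the scheme-theoretic image of $f$ with its reduced structure, so that $f$ factors as $X \to Z \hookrightarrow Y$. Since $X$ is irreducible, $Z$ is an irreducible closed subvariety of $Y$, and by hypothesis $Z$ meets the smooth locus $U$ of $Y$. First I would argue that $Z$ is not entirely contained in $\mathrm{Sing}(Y)$ — this is exactly the content of $f(X)\cap U \neq \varnothing$ — so that the generic point of $Z$ lies in $U$.

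The key step is then the observation that the normalization $\widetilde{Z}$ of $Z$ admits a map to $\widetilde{Y}$, or more directly, that the composite $X \to \widetilde{Z}$ (which exists because $X$ is normal and $X\to Z$ is dominant, hence factors through the normalization of $Z$) can be routed into $\widetilde{Y}$. For this I would use that $Z$ meets the smooth locus $U$ of $Y$: over $U$, the variety $Y$ is normal, so the normalization morphism $\widetilde{Y}\to Y$ is an isomorphism over $U$, and in particular $\widetilde{Y}$ contains an open dense subset isomorphic to $Z\cap U$ sitting inside $\widetilde{Y}$. More cleanly: consider the strict transform, i.e. the closure $\widetilde{Z}'$ of (the preimage of) $Z\cap U$ inside $\widetilde{Y}$. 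This $\widetilde{Z}'$ is a closed subvariety of the normal variety $\widetilde{Y}$ mapping finitely and birationally onto $Z$, hence by the first lemma (applied to $\widetilde{Z}' \to Z$, or directly by Zariski's main theorem since $\widetilde{Z}'$ is a closed subvariety of a normal variety mapping birationally onto $Z$) we get that $\widetilde{Z}'$ is the normalization of $Z$ whenever $\widetilde{Z}'$ is itself normal; in any case $\widetilde{Z}'$ dominates $\widetilde{Z}$, and composing $X \to \widetilde{Z} \to \widetilde{Z}' \hookrightarrow \widetilde{Y} \to Y$ recovers $f$.

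Let me streamline the argument to avoid this last subtlety: since $X$ is normal and irreducible and $X \to Z$ is dominant, the morphism $X \to Z$ factors through $\widetilde{Z}$. Now $\widetilde{Z} \to Z \hookrightarrow Y$ is a morphism from a normal variety; if I can show it factors through $\widetilde{Y}$, then so does $X \to Y$. But the normalization $\widetilde{Y} \to Y$ is finite and birational, so by the universal property it suffices that $\widetilde{Z}$ be normal (which it is) and that the morphism $\widetilde{Z} \to Y$ hit the locus where $Y$ is normal — equivalently, that $Y$ be normal along a dense open of the image. This holds precisely because $f(X)$, and hence $Z$, meets the smooth locus $U$: the generic point of $Z$ maps into $U \subseteq Y^{\mathrm{norm}}$. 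Therefore the rational map $\widetilde{Z} \dashrightarrow \widetilde{Y}$ induced by the birational identification over $U$ is a morphism defined in codimension zero, and since $\widetilde{Z}$ is normal while $\widetilde{Y} \to Y$ is finite, this rational map extends to a genuine morphism $\widetilde{Z} \to \widetilde{Y}$ over $Y$. Composing with $X \to \widetilde{Z}$ gives the desired factorization $X \to \widetilde{Y} \to Y$.

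The main obstacle I anticipate is the final extension step — ensuring the rational map $\widetilde{Z} \dashrightarrow \widetilde{Y}$ is everywhere defined. The cleanest way around this is to take the closure $W$ of the graph of $\widetilde{Z} \dashrightarrow \widetilde{Y}$ inside $\widetilde{Z} \times_Y \widetilde{Y}$: the projection $W \to \widetilde{Z}$ is proper, birational, and — because $\widetilde{Y} \to Y$ is finite — quasi-finite, hence finite; since $\widetilde{Z}$ is normal, Zariski's main theorem forces $W \to \widetilde{Z}$ to be an isomorphism, which exhibits the sought-after morphism $\widetilde{Z} \to \widetilde{Y}$. Everything else is formal manipulation with universal properties of scheme-theoretic images and normalizations.
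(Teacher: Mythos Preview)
Your proposal is correct and follows essentially the same route as the paper: take the image $Z = f(X)$, form its strict transform $Z' \subset \widetilde{Y}$ as the closure of the preimage of $Z \cap U$, note that $Z' \to Z$ is finite birational (being restricted from $\widetilde{Y} \to Y$), invoke the previous lemma to get $\widetilde{Z} \to Z'$, and compose with $X \to \widetilde{Z}$ coming from the universal property of normalization for the dominant map $X \to Z$. Your additional graph-closure paragraph is an unnecessary detour---the direct application of the previous lemma in your second paragraph is already complete and is precisely what the paper does (one verbal slip: it is $\widetilde{Z}$ that maps to $Z'$, not the other way around, but your written composition $X \to \widetilde{Z} \to Z' \hookrightarrow \widetilde{Y}$ is correct).
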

\begin{proof}
    Denote by $Z = f(X)\subset Y$ the image, which is closed. Then by assumption $U\cap Z$ is dense and open. Thus, the normalization $\Tilde{Y}\to Y$ induces a birational map $Z'\to Z$, where $Z'\subset \Tilde{Y}$ is the closure of the preimage of $U\cap Z$. The map is also finite as $\Tilde{Y}\to Y$ is finite. By the universal property of the normalization there is a factorization $\Tilde{Z}\to Z'\to Z$, where $\Tilde{Z}$ is the normalization of $Z$. On the other hand by the universal property, as $X\to Z$ is dominant, we get a factorization $X\to \Tilde{Z}\to Z$. Therefore, there is a map $X\to Z'\to \Tilde{Y}$.
\end{proof}
We will analyze the behaviour of the fibers of $\PP(\fF)\to S$, which are projective spaces, under birational maps of Hyperkähler varieties to symplectic varieties. However, the symplectic structure puts restrictions on these maps. The following is an simple consequence of Kaledin's stratifictaion result. However, in the smooth case the following would also follow from a direct argument or \cite[Lemma 1.1]{voisinRemarksCoisotropic}.
\begin{lemma}
\label{lem:MapsToSymplectic}
    There are no generically finite maps $f\colon X\to Y$ from a rational variety $X$ of dimension $k$ to a symplectic variety $Y$ of dimension $< 2k$.
\end{lemma}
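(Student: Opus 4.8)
The plan is to obtain a contradiction by producing a nonzero holomorphic $2$-form on a smooth projective rational variety. So assume such an $f$ exists, set $Z=\overline{f(X)}\subset Y$, an irreducible subvariety of dimension $k$, and (as in the applications) take $X$ projective.

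First I would reduce to a situation where the image meets the smooth locus of an ambient symplectic variety; this is where Kaledin's theorem enters. Applying the stratification of \Cref{thm:kaledinsymplectic} (which is the smooth--singular stratification), and using that $Z$ is irreducible and is covered by the finitely many closed sets $\overline{Y_j\cap Z}$, there is a unique stratum $Y_i$, which we may take irreducible, such that $Z\cap Y_i$ is dense (hence open) in $Z$; then $Z\subset\overline{Y_i}$ and $k\le\dim Y_i\le\dim Y<2k$. Let $W$ be the normalization of $\overline{Y_i}$: by \Cref{thm:kaledinsymplectic} it is again a symplectic variety, of dimension $d:=\dim Y_i\in[k,2k)$, and since $Y_i$ is smooth the normalization is an isomorphism over $Y_i$, so (identifying $Y_i$ with its preimage) $Y_i\subset W^{\mathrm{sm}}$. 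Now take a smooth projective resolution $\mu\colon X'\to X$, so $X'$ is rational; the morphism $X'\to X\to Z\hookrightarrow\overline{Y_i}$ has image $Z$ meeting the smooth locus of $\overline{Y_i}$, so by \Cref{lem:normalization} it factors through the normalization, producing a morphism $g\colon X'\to W$ whose image $\widetilde Z$ is finite and birational over $Z$. In particular $g$ is still generically finite onto the $k$-dimensional subvariety $\widetilde Z$, and $\widetilde Z$ meets $W^{\mathrm{sm}}$.

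Next I would exploit the symplectic form $\sigma\in H^0(W^{\mathrm{sm}},\Omega^2)$. The key elementary observation is that at a general point $w\in\widetilde Z\cap W^{\mathrm{sm}}$ the tangent subspace $T_w\widetilde Z\subset T_wW$ has dimension $k>\tfrac12 d$, hence cannot be $\sigma$-isotropic, so $\sigma$ restricts to a nonzero $2$-form on $\widetilde Z$ near $w$; thus $\sigma|_{\widetilde Z}$ is a nonzero rational $2$-form on $\widetilde Z$, regular on $\widetilde Z^{\mathrm{sm}}\cap W^{\mathrm{sm}}$. Since we work over $\CC$, the generically finite morphism $g$ is generically étale onto $\widetilde Z$, so at a general point of $X'$ its differential is an isomorphism onto the corresponding tangent space of $\widetilde Z$; pulling back, $g^*\sigma$ is a nonzero rational $2$-form on $X'$, regular over $g^{-1}(W^{\mathrm{sm}})$.

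Finally I would upgrade this to a genuine global form and conclude. Because $W$ is a symplectic variety, for a resolution $\pi\colon\widehat W\to W$ the form $\pi^*\sigma$ extends to a holomorphic $2$-form $\widehat\sigma\in H^0(\widehat W,\Omega^2_{\widehat W})$ (see \cite{KaledinSymplecticOverview}); resolving the indeterminacy of the induced rational map $X'\dashrightarrow\widehat W$ gives a projective birational $q\colon X''\to X'$ and a morphism $\widehat g\colon X''\to\widehat W$, and $\widehat g^*\widehat\sigma\in H^0(X'',\Omega^2_{X''})$. On the dense open where $q$ is an isomorphism and $g$ lands in $W^{\mathrm{sm}}$ this form equals $q^*g^*\sigma\neq 0$, so $\widehat g^*\widehat\sigma\neq 0$; but $X''$ is smooth, projective and rational, so $H^0(X'',\Omega^2_{X''})=0$ because $h^{2,0}$ is a birational invariant vanishing on $\PP^k$ --- a contradiction. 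The main obstacle, and the reason a one-line pullback argument does not suffice, is precisely that $f(X)$ may lie entirely inside $\mathrm{Sing}(Y)$, forcing the detour through the Kaledin stratum and its normalization before $\sigma$ can be restricted at all; a secondary technical point is that the pulled-back form is a priori only rational on $X'$, which is handled by the extension property of symplectic singularities together with a resolution of indeterminacy.
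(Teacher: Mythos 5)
Your proposal is correct and follows essentially the same route as the paper: reduce via Kaledin's stratification (\Cref{thm:kaledinsymplectic}) and the normalization lemma (\Cref{lem:normalization}) to a generically finite map whose image meets the smooth locus of a symplectic variety of dimension $<2k$, then contradict the vanishing of holomorphic $2$-forms on a smooth projective rational variety by pulling back the symplectic form, with the same dimension count. The only differences are cosmetic --- you phrase the non-vanishing as non-isotropy of a subspace of dimension $>d/2$ and resolve $W$ only at the end to globalize, whereas the paper resolves first and expresses the same point via an exact sequence of tangent and cotangent spaces.
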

\begin{proof}
    Replacing $X$ with a resolution of singularities we may assume that $X$ is smooth. In the following we want to use that there are no non-zero holomorphic forms on any smooth rational variety. Now suppose that $f$ is generically finite.\par
    By \Cref{thm:kaledinsymplectic} there is a stratification $Y_i$ of $Y$ into locally closed strata such that all $Y_i$ are smooth and symplectic and moreover the normalization of the closures are symplectic varieties. Therefore the image is contained in the closure $f(X)\subset \overline{Y_i}$ and $f(X)\cap Y_i\neq \varnothing$ for one $i$. Hence, $X_i = f^{-1}(Y_i)\subset X$ is open and non empty as well and $X_i \to Y_i$ is generically finite. By \Cref{lem:normalization} we may replace $Y$ with the normalization of $\overline{Y_i}$ and assume that there is a generically finite morphism $X\to Y$, where $Y$ is a symplectic variety and $f(X)\cap Y^{sm} \neq \varnothing$.\par
    Let $r\colon Z\to Y$ be a resolution of singularities. Then there is a holomorphic two form $\omega_Z\in H^0(Z, \Omega_Z^2)$, which is non-degenerate on $r^{-1}(Y^{sm})$. On the other hand, there is a rational map $X\dashrightarrow Z$, as $f(X)\cap Y^{sm}\neq \varnothing$.
    In the following let $\Tilde{X}\to X$ be a smooth resolution of $f\colon X\dashrightarrow Z$. This forces the pullback $f^*\omega_Z$ on $\Tilde{X}$ to be non-trivial, as the map $\Tilde{X}\to Z$ is generically \'etale and thus the \'etale locus meets the locus where $\omega_Z$ is non-degenerate: Let $x\in \Tilde{X}$ be such a point. Then there is the exact sequence
    \begin{equation*}
        T_{\Tilde{X},x} \to T_{Z, f(x)} \xrightarrow{\omega_Z} T_{Z, f(x)}^* \to T_{\Tilde{X},x}^*
    \end{equation*}
    where the first map is injective, the middle one is bijective and the last one is surjective. But as $\dim  T_{\Tilde{X},x} = k$ and $\dim  T_{Z,f(x)} < 2k$ this forces the composition - and hence $f^*\omega_Z$ - to be non-zero. On the other hand the pullback of $\omega_Z$ is zero as $\Tilde{X}$ is rational, a contradiction.
\end{proof}

\begin{proposition}
\label{prop:FlopNotContainedInExceptional}
Let $X\to \overline{X}$ be a birational morphism of a $2n$ dimensional Hyperkähler variety $X$ to a normal projective variety $\overline{X}$ which contracts only an irreducible divisor $E$. Furthermore let $\PP(\stbdl)\subset X$ be the projectivization of a vector bundle of rank $n$ over a K3 surface $S$. Then either
\begin{itemize}
    \item every fiber of $p\colon \PP(\stbdl)\to S$ gets contracted to a point by $X\to \overline{X}$, or
    \item $\PP(\stbdl)\not\subset E$.
\end{itemize} 
\end{proposition}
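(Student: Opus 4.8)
The plan is to argue by contradiction. Assume $\PP(\stbdl)\subset E$ but that not every fibre of $p$ is contracted; the goal is to produce a generically finite morphism from a smooth rational variety of dimension $k$ to a symplectic variety of dimension $<2k$, contradicting \Cref{lem:MapsToSymplectic}. The first step is a reduction. Writing $P_s=p^{-1}(s)\cong\PP^{n-1}$, the locus $\Sigma=\{s\in S: f(P_s)\text{ is not a point}\}$ is closed, by upper semicontinuity of fibre dimension applied to the proper morphism $\PP(\stbdl)\to S\times\overline{X}$, $x\mapsto(p(x),f(x))$, and pushing forward. Suppose $\Sigma$ is nonempty and proper. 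Pick $s_0\in\Sigma$ and a DVR $\operatorname{Spec}R$ mapping to $S$ with closed point going to $s_0$ and generic point going outside $\Sigma$; on the pulled-back family $\PP^{n-1}_R$ the composite with $f$ is constant on the generic fibre, so the image of $(\mathrm{str},f)$ in $(\operatorname{Spec}R)\times\overline{X}$ is an irreducible curve dominating $\operatorname{Spec}R$, whence its special fibre — which is $f(P_{s_0})$ set-theoretically — is finite, hence (by connectedness of $P_{s_0}$) a single point. This contradicts $s_0\in\Sigma$. Therefore $\Sigma=\varnothing$, which is the first alternative of the Proposition and we are done, or $\Sigma=S$; assume the latter.

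With $\Sigma=S$, fix a rational curve $C\subset S$ — every projective K3 surface contains one — with normalization $\nu\colon\PP^1\cong\tilde{C}\to C$, and let $d=\dim f(P_s)$ for general $s\in C$. Since every vector bundle on $\PP^1$ splits, I may choose a rank-$(d{+}1)$ subbundle of $\nu^*\stbdl$ whose fibre over a general point of $\PP^1$ is a general linear subspace of the corresponding $\PP^{n-1}$; its projectivization $R$ is a smooth rational variety of dimension $d+1$ (a $\PP^d$-bundle over $\PP^1$), and for a general such choice the induced morphism $g\colon R\to\overline{X}$ is generically finite onto its image $V$. By \Cref{lem:SingularLocusContraction} we have $V\subset f(E)=\operatorname{Sing}(\overline{X})$, of dimension $2n-2$. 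Applying Kaledin's stratification \Cref{thm:kaledinsymplectic} (recall it is the smooth–singular stratification), $V$ lies in the closure $Z$ of a smooth symplectic stratum $Y_i$ and meets $Y_i$, and Kaledin's local product structure places $Y_i$ in the smooth locus of $Z$; hence \Cref{lem:normalization} lifts $g$ to a generically finite morphism $R\to\widetilde{Z}$ onto the symplectic variety $\widetilde{Z}$, the normalization of $Z$, with $\dim\widetilde{Z}=\dim Z\le 2n-2$. By \Cref{lem:MapsToSymplectic} this forces $2n-2\ge\dim\widetilde{Z}\ge 2(d+1)$, i.e.\ $d\le n-2$.

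The remaining — and in my view the main — difficulty is to rule out $d\le n-2$, i.e.\ to show that a non-contracted $P_s\cong\PP^{n-1}$ contained in $E$ must satisfy $d=\dim f(P_s)=n-1$, so that the inequality becomes $n-1\le n-2$. Here I would exploit the geometry of $E\to\operatorname{Sing}(\overline{X})$: since $E$ is a prime divisor with $\dim f(E)=2n-2$, the generic fibre of $f|_E$ is a curve, and by semi-smallness of $f$ the locus of $E$ over which these fibres have dimension $\geq 2$ has codimension $\geq 1$ in $E$; combined with $\sigma|_{P_s}=0$ (so that $f(P_s)$ is isotropic in $\operatorname{Sing}(\overline{X})$, and $P_s$ cannot be everywhere tangent to the characteristic foliation cut out by $f|_E$), this should force $f|_{P_s}$ to be generically finite onto its image, yielding $d=n-1$ and the contradiction. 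Converting this isotropy-plus-semi-smallness input into the transversality statement $d=n-1$ is the step where the real work lies; everything else is the mechanical combination of Kaledin's stratification with \Cref{lem:normalization} and \Cref{lem:MapsToSymplectic} via a rational thickening of a single fibre.
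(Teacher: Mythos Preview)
Your reduction to the case $\Sigma = S$ is correct and in fact more careful than the paper, which simply assumes from the outset that no fibre is contracted. However, the core of your argument has a genuine gap.

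First, you overlook that once $\Sigma = S$, every $f|_{P_s}$ is automatically finite: any morphism from $\PP^{n-1}$ to a projective variety is either constant or finite, since the pullback of an ample class is $\oO(a)$ with $a\ge 0$, and $a>0$ forces finiteness. Hence $d = \dim f(P_s) = n-1$ for every $s$, your rank-$(d{+}1)$ subbundle is all of $\nu^*\stbdl$, and $R = \PP(\nu^*\stbdl)$ has dimension $n$.

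Second, and more seriously, your claim that ``for a general such choice the induced morphism $g\colon R\to\overline{X}$ is generically finite'' is unjustified. You use it to apply the contrapositive of \Cref{lem:MapsToSymplectic} and deduce $\dim\widetilde{Z} \ge 2(d+1)$. But there is no independent reason for $g$ to be generically finite, and the lemma applied \emph{directly} says it cannot be: $R$ is rational of dimension $n$ and $\widetilde{Z}$ is symplectic of dimension at most $2n-2 < 2n$. So the hypothesis you need is precisely what the lemma forbids, and your inequality $d \le n-2$ is not established. Since in fact $d = n-1$, there is no contradiction at this stage; your final paragraph tries to rule out a case ($d \le n-2$) that never arises, while the actual situation ($d = n-1$ with $g$ not generically finite) is left untouched.

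The paper's route is genuinely different. It uses \Cref{lem:MapsToSymplectic} in the forward direction: applied to the rational $n$-fold $\PP(\stbdl|_R)$ over a rational curve $R\subset S$, it yields that $f$ has fibres of dimension $\ge 1$ along $\PP(\stbdl|_R)$. Then, exploiting an infinite family of pairwise intersecting rational curves on $S$, it bootstraps to fibres of dimension $\ge 2$ on a dense subset, hence everywhere by upper semicontinuity, forcing $f(\PP(\stbdl)) = f(F)$ for every fibre $F \cong \PP^{n-1}$. The contradiction then comes from the symplectic form itself: Kaledin's local description gives $\sigma_X|_W = g^*\sigma_Z$ for a nonzero $2$-form $\sigma_Z$ on a smooth open $Z\subset f(F)$, and the finite surjection $F\cap W \to Z$ would pull $\sigma_Z$ back to a nonzero $2$-form on an open subset of $\PP^{n-1}$, which is impossible. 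This symplectic-form step is the missing idea in your sketch; the semi-smallness/isotropy heuristic you propose does not substitute for it.
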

\begin{remark}
Suppose that $\PP(\stbdl)\subset E$. The strategy is to first show that $X\to\overline{X}$ contracts $\PP(\stbdl)$ to an at most $n-1-$ dimensional subvariety, which is the image of a projective space $\PP^{n-1}$. Afterwards, we show that this contradicts that $X\to \overline{X}$ is a crepant resolution of a symplectic variety if the first condition is not met.
\end{remark}
\begin{proof}
Suppose we have $\PP(\stbdl)\subset E$ and no fiber of $p\colon \PP(\stbdl)\to S$ gets contracted by the birational morphism $X\to\overline{X}$. Furthermore denote $\overline{E} = f(E)$. 
We claim that $f(\PP(\stbdl)) = f(F)$ for all fibers $F\cong \PP^{n-1}$ of $\PP(\stbdl)\to S$.\par 
From Kaledin's theorem there is a stratification $X_i\subset \overline{X}$ of locally closed subsets that are smooth and symplectic. As the smooth part of $\overline{X}$ is precisely $\overline{X}\backslash\overline{E}$, we get that this is one of the strata. Therefore the other strata are all contained in $\overline{E}$ and cover it. Now, let $X_i$ be a stratum, such that $\overline{E}\cap X_i\neq \varnothing$ and $\overline{E}\subset \overline{X_i}$, where $\overline{X_i}\subset \overline{X}$ is the closure. It is necessarily of dimension $\le 2n-2$. By the previous lemma on the normalization $\PP(\stbdl)\to \overline{X_i}$ factors through the normalization $\Tilde{X_i}$ of $\overline{X_i}$. \par
At first observe that the fibers $\PP^{n-1}$ cannot get contracted as any map from  $\PP^{n-1}$ is either finite or constant and by assumption the latter does not happen.
On the other hand, for any rational curve $R\subset S$, the set $\PP(\stbdl|_{{R}})$ is a rational variety of dimension $n$.
But then it maps to the symplectic variety $\Tilde{X_i}$ of dimension $\le 2n-2$, thus it cannot be generically finite by \Cref{lem:MapsToSymplectic}.
By upper semi-continuity we then have $\dim f^{-1}(f(x)) \ge 1$ for all $x\in \PP(\stbdl|_{R})$. Therefore, $f(\PP(\stbdl|_{R})) = f(F)$ for $F$ a fiber over $R$.\par
There is a sequence of rational curves $(R_i)_{i\in \NN}$ such that $R_i\cap R_j\neq \varnothing$ for all $i,j\in \NN$: From \cite{chen2019curves} there are infinitely many rational curves on any K3 surface and by Chen--Gounelas \cite[Equation 3.15]{chenGounelasMaximalModuli} the statement holds when $R_i^2$ is bounded. On the other hand if this number is unbounded, we can pick rational curves $R_i$ with positive self intersection. These satisfy $R_i.R_j > 0$, giving the claim. \par

Let $x_i\in \PP(\stbdl|_{R_i})$ and let $C_i\subset f^{-1}(f(x_i))\cap \PP(\stbdl|_{R_i})$ be a curve. This curve satisfies $p(C_i) = R_i$, as otherwise $C_i$ would be contained in a fiber of $p\colon \PP(\stbdl)\to S$, which gets contracted by $f$. This, however, would force the whole fiber $\PP^{n-1} = F = p^{-1}(p(x))$ to get contracted, which does not happen by assumption. \par
Thus, for any $j$ there is a point $x_j\in C_i$ such that $p(x_j) \in R_j$ as $R_i\cap R_j \neq \varnothing$ for all $i,j$. With the same argument we find curves $C_j\subset f^{-1}(f(x_j))$ such that $p(C_j) = R_j$ and therefore all these curves $C_j$ have to be pairwise distinct. On the other hand, by construction $f(x_i) = f(x_j)$ and therefore we conclude $\dim f^{-1}(f(x_j)) \ge 2$ for all $x_j\in \PP(\stbdl|_{R_j})$ as $\bigcup_j C_j\subset f^{-1}(f(x_i))$. \par

But then, the space of such points $x$ is dense in $\PP(\stbdl)$, and thus $\dim f^{-1}(f(x)) \ge 2$ for all $x\in \PP(\stbdl)$ by upper semi-continuity. 
Thus, $f(\PP(\stbdl)) = f(F)$, as the image is irreducible, contains the latter and both spaces have the same dimension. \par

The symplectic form $\sigma_X$ restricts to a non trivial $2$-form on $\PP(\stbdl)$, as $2\dim \PP(\stbdl)> \dim X = 2n$. Hence, it is the pullback of the two form on the K3 surface $S$ as $H^2(\PP(\stbdl), \oO_\PP) \cong H^2(S, \oO_S) = \CC$.\par
Denote by $Z\subset f(F)$ the open subset of the smooth part of $f(F)$ such that $g\colon W = \PP(\stbdl)\cap f^{-1}(Z)\to Z$ is smooth. By Kaledin \cite[Lemma 2.9]{kaledinSymplecticPoisson} we can shrink $Z$ to obtain a $2$-form $\sigma_Z$ on $Z$, such that the $2$-form $\sigma_X$ of $X$ satisfies $\sigma_X|_W = g^*\sigma_Z$. As $W\subset \PP(\stbdl)$ is open, $\sigma_X|_W$ and $\sigma_Z$ cannot be trivial. On the other, denote by $F'$ a non empty intersection of a fiber $F$ of $p\colon \PP(\stbdl)\to S$, i.e. $F' = F\cap W$. Then we have the composition 
\begin{equation*}
    F' \to W \to Z
\end{equation*}
which is finite. Therefore the pullback of $\sigma_Z$ to $F'$ cannot be trivial. But the commutative diagram
\begin{equation*}
\begin{tikzcd}[row sep=huge]
\PP^{n-1} \arrow[r, hook] &X\arrow[r, two heads] &\overline{X}\\
F'\arrow[r, hook]\arrow[u, hook]&W\arrow[u, hook] \arrow[r, two heads]&  Z\arrow[u, hook]
\end{tikzcd}
\end{equation*}
shows that $\sigma_X|_{F'} = \sigma_X|_{\PP^{n-1}}|_{F'} = 0$, a contradiction.
\end{proof}

Arguing in the same way one proves:
\begin{theorem}
    Let $X$ be a $2n$-dimensional hyperkähler variety and $X\to \overline{X}$ a birational morphism that contracts only a divisor $E$. Then for any $\PP^n\subset X$ it holds that either
    \begin{itemize}
        \item $\PP^n$ gets contracted by $X\to \overline{X}$, or
        \item $\PP^n\not\subset E$.
    \end{itemize}
    
\end{theorem}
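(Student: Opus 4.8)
The plan is to deduce this directly from the machinery behind \Cref{prop:FlopNotContainedInExceptional}: since a non-constant morphism out of $\PP^n$ is automatically finite onto its image, the fibration argument used there collapses into a single application of \Cref{lem:MapsToSymplectic}. Suppose for contradiction that $\PP^n\subset E$ and that $f\colon X\to\overline X$ does not contract $\PP^n$. Then $f|_{\PP^n}$ is non-constant, so its image $W=f(\PP^n)$ has positive dimension and the pullback to $\PP^n$ of an ample class on $\overline X$ is a non-zero nef, hence ample, class; as $\Pic(\PP^n)$ has rank one this forces $f|_{\PP^n}$ to be finite onto $W$, so $\dim W=n$. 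Moreover $W\subset f(E)=\overline E=\textup{Sing}(\overline X)$ by \Cref{lem:SingularLocusContraction}, and $\dim\overline E=2n-2$ by semi-smallness of $f$.

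Next I would invoke Kaledin's stratification. By \Cref{thm:kaledinsymplectic} there is a stratification of $\overline X$ into smooth symplectic locally closed $X_i$, with $\overline X\setminus\overline E$ one of the strata and the remaining strata covering $\overline E$, and with the normalization $\Tilde{X_i}$ of each closure $\overline{X_i}$ again symplectic. Since $W$ is irreducible there is a unique stratum $X_i$ with $W\cap X_i$ open and dense in $W$; then $W\subset\overline{X_i}$, and because $W\subset\textup{Sing}(\overline X)$ this stratum is not $\overline X\setminus\overline E$, so $X_i\subset\overline E$ and $\dim\Tilde{X_i}=\dim X_i\le 2n-2<2n$. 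As the stratification is the smooth--singular one, every point of $X_i$ lies in the smooth locus of $\overline{X_i}$, so $W$ meets $\overline{X_i}^{sm}$, and \Cref{lem:normalization} provides a factorization $\PP^n\to\Tilde{X_i}\to\overline{X_i}$.

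Finally I would derive the contradiction. The composite $\PP^n\to\overline{X_i}$ has $n$-dimensional image $W$ and is finite onto $W$, so $\PP^n\to\Tilde{X_i}$ is generically finite onto an $n$-dimensional subvariety. But $\PP^n$ is rational of dimension $n$ while $\Tilde{X_i}$ is a symplectic variety of dimension $<2n$, which contradicts \Cref{lem:MapsToSymplectic}. Hence $f$ contracts $\PP^n$, and the stated dichotomy follows.

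Since every ingredient is already available from this section, there is no real obstacle; the only care needed is in the dimension count — that $\textup{Sing}(\overline X)=f(E)$ has dimension $2n-2$ (\Cref{lem:SingularLocusContraction}), so that $\Tilde{X_i}$ has dimension $<2n$ — and in choosing a stratum $X_i$ whose closure is met by $f(\PP^n)$ in its smooth locus, so that both \Cref{lem:normalization} and \Cref{lem:MapsToSymplectic} apply; this is precisely the book-keeping carried out in the proof of \Cref{prop:FlopNotContainedInExceptional}. In contrast to that proposition one does not need the symplectic-form computation here, because finiteness of a non-constant morphism from $\PP^n$ already pins the image down to dimension $n$.
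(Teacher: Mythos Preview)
Your proof is correct and is essentially the argument the paper intends: the paper merely writes ``Arguing in the same way one proves'' this theorem, and what you have written is precisely the natural simplification of the proof of \Cref{prop:FlopNotContainedInExceptional} to the case of a single $\PP^n$, using the same ingredients (Kaledin's stratification, \Cref{lem:SingularLocusContraction}, \Cref{lem:normalization}, and \Cref{lem:MapsToSymplectic}). You are also right that the rational-curve bookkeeping and the final symplectic-form computation from that proposition are unnecessary here, since a non-constant morphism from $\PP^n$ is automatically finite onto its $n$-dimensional image, so \Cref{lem:MapsToSymplectic} applies directly.
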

\begin{question*}
    Let $X\to \overline{X}$ be a birational morphism that contracts a divisor $E$ and $F\subset X$ the exceptional locus of a flopping contraction. Is $F\not\subset E$?
\end{question*}

Before stating the general result we remark the following, which gives some bounds on the effective cone.
\begin{remark}
\label{rem:ClassCuttingAmple}
	The \Cref{ex:MukaiBundles} deals with the embedding $\PP(\fF) \subset M_H(v)$. It turns out that there is a class $B \in \Halg(S)$ such that any class in $\Pic(X)_{B = 0}|_E \subset \Pic(E)$ is just the pull back of a line bundle in $\Pic(S)$ under the map $E= \PP(\fF) \to S$. Thus, any such class is not big.
\end{remark}
The following now proves \Cref{mainthm3}. Let $B$ be as in the remark above.
\begin{theorem}
    \label{cor:MukaiEffHigherDim}
    Let $S$ be a K3 surface with $\Pic(S) = \ZZ H$ and $\fF$ be an $\mu_H$-stable vector bundle with Mukai vector $v = (r,D, c)\in \Halg(S)$ and $r>1$. Assume that $v^2 = -2$ and $(r,D,c-1)$ primitive. Denote by $F = \PP(\fF)$ the projectivization and by $X = M_H(r,D,c-1)$ the moduli space of stable sheaves. Suppose further that $\Nef(X)_{B\ge 0} = \Mov(X)_{B\ge 0}$. Then the induced morphism by the inclusion $F\hookrightarrow X$ 
    \begin{equation*}
        \Eff(X)_{B \ge 0} \to \Eff(F)
    \end{equation*}
    is an isomorphism.
\end{theorem}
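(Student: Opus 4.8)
The plan is to follow the template of \Cref{mainthm1} and \Cref{thm:Eff4Fold}. Write $v' = (r,D,c-1)$, so that $X = M_H(v')$, $\Pic(X)\cong v'^\perp\subset\Halg(S)$, and $F = \PP(\fF)\subset X$ as in \Cref{ex:MukaiBundles}. First I would check that the restriction $\Pic(X)_\QQ \to \Pic(F)_\QQ$ is a bijection: identifying $\Pic(F)_\RR = \RR L\oplus \RR\,p^*H$ and using the formula $\phi(r',D',c')|_F = -r'L + p^*D'$, this is an explicit linear-algebra computation in the Mukai lattice, and both sides have rank two because $\Pic(S)=\ZZ H$. After orienting $B$ appropriately, \Cref{rem:ClassCuttingAmple} shows that the $L$-coefficient of $D|_F$ is a strictly positive multiple of $\bbf{D}{B}$; since every pseudo-effective class on $\PP(\fF)$ has non-negative $L$-coefficient (restrict to a fibre $\PP^{r-1}$ of $p$), any $D\in\Pic(X)$ with $\bbf{D}{B}<0$ restricts to a non-pseudo-effective class. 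Hence restriction carries $\Eff(X)_{B\ge 0}$ into the half-space of classes on $F$ with non-negative $L$-coefficient, and the theorem asserts that the image is precisely $\Eff(F)$.

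For the inclusion $\Eff(X)_{B\ge 0}|_F\subseteq \Eff(F)$ I would decompose $\Eff(X)_{B\ge 0}$ by means of \Cref{ex:BayerMacri}, the Zariski decomposition \Cref{thm:ZariskiDecomposition} and \Cref{cor:EffMovDual}: it is generated by the exceptional prime divisors $E'$ with $\bbf{E'}{B}\ge 0$, by $\Mov(X)_{B\ge 0}$, and by $\Eff(X)_{B=0}$, and it suffices to show that each generator restricts to a pseudo-effective class on $F$. By the hypothesis $\Nef(X)_{B\ge 0}=\Mov(X)_{B\ge 0}$ every movable divisor with $\bbf{\cdot}{B}\ge 0$ is already nef on $X$, hence restricts to a nef, a fortiori pseudo-effective, class. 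A divisor $M$ with $\bbf{M}{B}=0$ restricts to $p^*M'$ with $M'\in\Pic(S)$; if $M'$ were not pseudo-effective there would be an ample $A$ on $S$ with $A.M'<0$, and then the big and nef class $L_A$ of \Cref{rem:AmplesOnExceptional} — which itself satisfies $\bbf{L_A}{B}=0$ — would have $\bbf{L_A}{M}=A.M'<0$, contradicting pseudo-effectivity of $M$, exactly as in the proof of \Cref{thm:Eff4Fold}. For an exceptional prime divisor $E'$ with $\bbf{E'}{B}\ge 0$: if $F\not\subset E'$ then $E'|_F = E'\cap F$ is an honest effective divisor on $F$; if $F\subset E'$, then \Cref{prop:FlopNotContainedInExceptional} applied to the divisorial contraction $X\to\overline X_{E'}$ associated to $E'$ forces every fibre of $p$ to be contracted, so this contraction factors through $S$ and the nef class inducing it restricts to a pull-back from $S$, whence it lies on the line $\{\bbf{\cdot}{B}=0\}$ together with $L_A$; as that line is one-dimensional in $\Pic(X)_\RR$ the two nef classes are proportional, so $X\to\overline X_{E'}$ would coincide with the flopping contraction $X\to M^{\mu ss}$ of \Cref{ex:MukaiBundles} — impossible for $r>2$, where one is divisorial and the other small (for $r=2$ one simply recovers \Cref{thm:Eff4Fold}).

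The reverse inclusion $\Eff(F)\subseteq \Eff(X)_{B\ge 0}|_F$ amounts, via the isomorphism of Picard groups from the first step, to the implication: if $D\in\Pic(X)_\QQ$ has $\bbf{D}{B}\ge 0$ but $D\notin\Eff(X)$, then $D|_F$ is not pseudo-effective. Since $F$ is not a complete intersection, only the first alternative of \Cref{prop:NotEffective} is available, so one has to exhibit an \emph{ample} class $A'$ on $X$ with $\bbf{A'}{D}<0$ and then conclude by Verbitsky's vanishing as in that proposition (applied to a twist by high powers of $D$ of a locally free resolution of $I_F$ provided by \Cref{lem:Resolution}). The existence of such an $A'$ is exactly where the hypothesis $\Nef(X)_{B\ge 0}=\Mov(X)_{B\ge 0}$ enters: it should ensure that a movable class $M$ with $\bbf{M}{D}<0$, supplied by \Cref{cor:EffMovDual}, can be chosen with $\bbf{M}{B}\ge 0$, hence nef on $X$ and perturbable to an ample class. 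I expect this last point to be the main obstacle of the proof: it requires controlling, for the separating movable class, the sign of its pairing with $B$, which in turn comes down to a careful analysis of the chamber decomposition of \Cref{ex:BayerMacri} near the wall $\{\bbf{\cdot}{B}=0\}$ and of the type (null class, $(-2)$-class, or isotropic $w$-class) of the extremal ray of $\Eff(X)$ lying on the side $\bbf{\cdot}{B}>0$. Once this is established, combining the two inclusions with the injectivity from the first step yields the asserted bijection.
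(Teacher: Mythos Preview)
Your proposal is essentially correct and follows the paper's approach closely. The only real discrepancy is what you flag as ``the main obstacle'': in the paper this is no obstacle at all, because $\Pic(S)=\ZZ H$ forces $X$ to have Picard rank $2$. There is then a \emph{unique} extremal ray $R$ of $\Eff(X)$ with $\bbf{R}{B}>0$, and the extremal movable ray $N=R^\perp$ automatically lies on the $\bbf{\cdot}{B}\ge 0$ side (in the rank-two picture it sits between $L_A$ and $R$), so the hypothesis $\Nef(X)_{B\ge 0}=\Mov(X)_{B\ge 0}$ makes $N$ nef; for any non-effective $D$ with $\bbf{D}{B}>0$ one has $\bbf{N}{D}<0$, and perturbing $N$ gives the ample class needed for \Cref{prop:NotEffective}. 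No chamber analysis from \Cref{ex:BayerMacri} and no case distinction on the type of $R$ is required---the paper dispatches this in one line by invoking the Picard rank. Your treatment of the exceptional divisor $E'$ via \Cref{prop:FlopNotContainedInExceptional}, where you explicitly rule out the ``fibres contracted'' alternative by comparing with the Donaldson--Uhlenbeck contraction on the line $\bbf{\cdot}{B}=0$, is in fact more detailed than the paper's own proof, which simply asserts $F\not\subset E$.
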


\begin{proof}
    As the Picard rank of $S$ is $1$ the moduli space $X$ is a Hyperkähler manifold and we can mimic the proof of \Cref{thm:Eff4Fold}. By the remark above any class $D \in \Eff(X)_{B< 0}$ restricts to a non-effective divisor $D|_F$. As the Picard rank of $X$ is $2$, there is exactly one extremal ray $R$ of $\Eff(X)$ with $\bbf{R}{B}>0$. Therefore by assumption we can assume that there is a nef divisor $N$ which satisfies $\bbf{N}{R} = 0$ and $\bbf{N}{D}< 0$ for any divisor nearby $R$ but outside the effective cone. 
    Arguing as in the proof of \Cref{thm:Eff4Fold} and by \Cref{prop:NotEffective} we know that any non-effective divisor $D$ satisfies $H^0(\PP(\fF), D|_{\PP(\fF)}) = 0$ as well.\par 
    Therefore, we only need to show that the restriction of any effective exceptional divisor $E$ with $\bbf{B}{E} \ge 0$ is effective: Any such divisor is birationally contractible and the assumption on the nef cone yields that $E$ is actually contractible in $X$ itself. Then, by \Cref{prop:FlopNotContainedInExceptional} we have that $F\not\subset E$ and thus, $E|_F$ is effective.
\end{proof}
In the same way one can show that $\Eff(X)|_{\PP(\fF)} \subset \Eff(\PP(\fF))$ in any Picard rank. However, the assumption on the Picard rank was crucial, as we could not assure, that for any non-effective divisor $D$ with $\bbf{D}{B} > 0$ we find an \emph{ample} divisor which is negative on $D$.
\begin{question*}
    Does \Cref{cor:MukaiEffHigherDim} also hold for $\rk \Pic(S) > 1$?
\end{question*}


\section{Computation of effective cones of rigid stable bundles and the cotangent bundle}
\label{sec:Computations}
In this section we will provide some numerics for some projectivized bundles on K3 surfaces with $\Pic(S) = \ZZ H$. This builds on the characterisation of the effective cone, see \Cref{ex:BayerMacri}.
\subsection*{Cotangent bundle}
We start with discussing the cotangent bundle $\Omega_S$ for a K3 surface $S$ in detail. Suppose $\Pic S = \ZZ H$. In \cite{ottem2020remarks} Gounelas--Ottem computed the nef (resp. effective) threshold for the cotangent bundle $\PP(\Omega)\xrightarrow{p} S$ in some degrees. We have that the Hilbert-Chow exceptional divisor $E\subset S^{[2]} = X$ is isomorphic to $\PP(\Omega_S)$ and the contraction is induced by a line bundle $H^{[2]}\in \Pic(S^{[2]})$ which restricts to $2p^*H$, as in \Cref{ex:CotangentBundle}. Furthermore the adjunction formula yields $\oO_X(E)|_E = \oO_\PP(-2) = -2L$. 

The four-fold case is special, as we have a complete description of flops occurring on them, i.e. they are all Mukai flops along a $\PP^2\subset X$. This allows us to gain more insight in the ample cone as follows: The lemma below also occurred in \cite{ottem2020remarks}.
\begin{lemma}
    Let $X$ be a Hyperkähler fourfold with Picard rank $2$ that admits a flopping contraction $X\to \overline{X}$ of a $\PP^2$. Then any contractible exceptional divisor $E\subset X$ meets $\PP^2$ in a curve.
\end{lemma}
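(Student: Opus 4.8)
The plan is to work out the intersection-theoretic constraints imposed on an exceptional divisor $E$ by the $\mathbb{P}^2$ being flopped, and then argue that $E \cap \mathbb{P}^2$ cannot be empty (nor, afterwards, can it be too small). First I would fix notation: write $\ell \in N_1(X)$ for the class of a line in the flopped $\mathbb{P}^2$, so that $\ell$ generates the extremal ray of $\overline{NE}(X)$ contracted by $X \to \overline{X}$. Since $X$ has Picard rank $2$, $N_1(X)_{\mathbb{R}}$ is two-dimensional and there are exactly two extremal rays of the Mori cone; $\ell$ is one of them. A contractible exceptional divisor $E$ spans, by \Cref{cor:EffMovDual} and the Zariski decomposition \Cref{thm:ZariskiDecomposition}, an extremal ray of $\Eff(X)$, hence (Picard rank $2$) an extremal ray of the dual picture as well — so the curve class it pairs negatively with is the \emph{other} extremal ray of $\overline{NE}(X)$, not $\ell$. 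The key point I want to extract is therefore: $E \cdot \ell \geq 0$. Indeed $E$ is irreducible and the class $\ell$ moves in the fibers of the small contraction, so if $E$ does not contain the flopped locus entirely, then $E \cdot \ell \geq 0$, and if it does contain $\mathbb{P}^2$, then $E \supset \mathbb{P}^2$ gives $E \cdot \ell = \deg(\oO_X(E)|_{\mathbb{P}^2}) \cdot (\text{line})$, which is $\geq 0$ precisely when $\oO_X(E)|_{\mathbb{P}^2}$ is effective — but a hyperkähler fourfold contracting a $\mathbb{P}^2$ has $N_{\mathbb{P}^2/X} \cong \Omega^1_{\mathbb{P}^2}$ (the standard local model of a Mukai flop), so by adjunction $\oO_X(E)|_{\mathbb{P}^2}$ would have to be a pullback-type restriction, forcing $E|_{\mathbb{P}^2} = \oO_{\mathbb{P}^2}(a)$ with $a \le 0$; combined with effectivity this would give $E \cap \mathbb{P}^2$ of the wrong dimension and we will rule it out by a separate symplectic argument as in \Cref{prop:FlopNotContainedInExceptional}. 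Either way one records $E \cdot \ell \geq 0$.

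Next I would suppose for contradiction that $E \cap \mathbb{P}^2 = \varnothing$. Then $E|_{\mathbb{P}^2} = 0$ as a divisor class on $\mathbb{P}^2$, i.e. $E \cdot C = 0$ for every curve $C \subset \mathbb{P}^2$, in particular $E \cdot \ell = 0$. On the other hand, consider the other extremal ray of $\overline{NE}(X)$: since $X \to \overline{X}$ is a flop and not an isomorphism, $\overline{X}$ is a genuine singular symplectic fourfold and the flopped variety $X^+$ also contracts this $\mathbb{P}^2$; the second extremal ray corresponds to the \emph{divisorial} contraction that $E$ induces (recall $E$ is a contractible exceptional divisor, and in Picard rank $2$ there is nothing else for it to be). Pairing $E$ against a curve $C_0$ contracted by the $E$-contraction gives $E \cdot C_0 < 0$, so $C_0$ and $\ell$ span distinct rays — consistent so far. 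The contradiction I want comes from a positivity/connectedness argument: the divisor $E$, being effective and irreducible with $E \cdot \ell = 0$, has the property that $\mathbb{P}^2$ is disjoint from $E$, hence $\mathbb{P}^2$ maps into the smooth locus of $\overline{X}$ under $X \to \overline{X}$ (since $\mathrm{Sing}(\overline{X})$ is covered by the images of the contracted loci, and the only divisor contracted is $E$ while the only small locus is our $\mathbb{P}^2$ whose image is the flopping center). But a flopping $\mathbb{P}^2$ maps to a point or a surface in $\overline{X}$ lying along $\mathrm{Sing}(\overline{X})$ — this is exactly the statement that the flopping contraction is not an isomorphism near $\mathbb{P}^2$, forcing $\overline{X}$ to be singular along the image of $\mathbb{P}^2$; and one checks, using \Cref{lem:SingularLocusContraction} applied to the composite $X \to \overline{X}$ and the divisorial part, that $\mathrm{Sing}(\overline{X})$ contains $f(E)$. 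The incompatibility: the image of $\mathbb{P}^2$ must lie in $\mathrm{Sing}(\overline{X}) \supseteq f(E)$, but $f^{-1}(f(E)) = E$ (the divisorial contraction is an isomorphism away from $E$), so $\mathbb{P}^2 \subset f^{-1}(f(E)) = E$, contradicting $E \cap \mathbb{P}^2 = \varnothing$.

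The main obstacle I anticipate is the careful handling of the geometry of $\overline{X}$ near the image of the flopped $\mathbb{P}^2$: one needs to know precisely that $\mathrm{Sing}(\overline{X})$ is the union of the image of the divisorial locus $f(E)$ \emph{and} the image of the flopping $\mathbb{P}^2$, and that these two contractions can be performed in either order (run the $E$-contraction first, or the flop first) so that the relevant singular strata are correctly identified — this is where I would invoke that on a hyperkähler fourfold every flopping contraction is the Mukai flop of a plane with the standard local model, so $\overline{X}$ has transverse $A_1$-type behavior along a surface there, while along $f(E)$ it has the divisorial-contraction model from \Cref{lem:SingularLocusContraction}. Assembling these into a clean statement that $\mathbb{P}^2 \cap E \ne \varnothing$, and then upgrading "nonempty intersection" to "meets in a curve" (ruling out a finite intersection: a finite set of points on $\mathbb{P}^2$ would give $E|_{\mathbb{P}^2} = \oO_{\mathbb{P}^2}(0)$ again, since the only effective divisor classes on $\mathbb{P}^2$ restricting trivially to all lines is $0$, so $E \cap \mathbb{P}^2$ has pure dimension $1$), is the part that requires the most bookkeeping; everything else is extremal-ray combinatorics in the rank-$2$ Picard lattice.
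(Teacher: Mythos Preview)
Your argument has a genuine gap in the step where you derive a contradiction from $E\cap\PP^2=\varnothing$. You correctly record $E\cdot\ell=0$, but then you try to run a singular--locus argument that conflates two different morphisms: the flopping contraction $X\to\overline{X}$ of the lemma (which contracts only $\PP^2$) and a hypothetical divisorial contraction of $E$. There is no single map in sight contracting both; \Cref{lem:SingularLocusContraction} applies only to a morphism contracting \emph{only} a prime divisor, so you cannot invoke it for a ``composite''. And even granting a space whose singular locus contains both $f(E)$ and the image of $\PP^2$, the implication ``image of $\PP^2\subset\operatorname{Sing}\supseteq f(E)$, hence $\PP^2\subset f^{-1}(f(E))=E$'' is a non sequitur: the singular locus may well have several components, with the flopping point lying on one and $f(E)$ on another.

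The paper's proof is a one--line linear algebra argument which you nearly have in hand. From $E\cap\PP^2=\varnothing$ you get $E\cdot\ell=0$. The point you are missing is that the big and nef divisor $A$ inducing the flop satisfies $A\cdot\ell=0$ as well. Since $\bbf{E}{E}<0$ while $\bbf{A}{A}>0$, the classes $E$ and $A$ are linearly independent, and as $\rho(X)=2$ they span $N^1(X)_\RR$. Hence every divisor pairs to zero with $\ell$; in particular $H\cdot\ell=0$ for an ample $H$, a contradiction. The remaining case $\PP^2\subset E$ is handled exactly by the symplectic--form argument you allude to (contracting $E$ to a symplectic surface would force a nontrivial holomorphic $2$--form on $\PP^2$).

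Finally, your worry about upgrading ``nonempty'' to ``a curve'' is unnecessary: in a smooth fourfold every irreducible component of $E\cap\PP^2$ has dimension at least $\dim E+\dim\PP^2-\dim X=1$, so a nonempty proper intersection is automatically a curve.
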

\begin{proof}
    Let $A\in \Pic X$ be the big and nef divisor inducing the flopping contraction $X\to \overline{X}$ which contracts $\PP^2$ to a point. Suppose the intersection $\PP^2\cap E = \emptyset$. Then $E.C = 0 = A.C$ for any line $C\subset \PP^2$. But as  by the assumption on the Picard rank we have that then also $H.C = 0$, a contradiction. On the other hand $\PP^2\subset E$ also leads to a contradiction, as the divisor $E$ gets contracted to a symplectic variety of dimension $2$, which again would yield a non-trivial holomorphic two form on $\PP^2$.
\end{proof}

This allows us to extend the computation in \cite[Theorem 3.8]{ottem2020remarks}: With the method of \Cref{sec:EffConeExc} and the Lemma above we can for any degree compute at least one of the effective or the nef cone, depending on whether $S^{[2]}$ admits a flop.
\begin{corollary}
    Let $S$ be a K3 surface with $\Pic(S)= \ZZ H$ of degree $H^2 = 2d$. 
    Suppose that the equation $x^2-4dy^2 = 1$ 
    \begin{itemize}
        \item has a minimal solution $(x_0,y_0)$. Then $$\Nef(\PP(\Omega_S)) = \langle \Tilde{H}+d\frac{y_0}{x_0}L,\Tilde{H} \rangle.$$
        \item has no solution. If
        \begin{itemize}
            \item $d$ is a perfect square $d= t^2$, then $$\Eff(\PP(\Omega_S)) = \Nef(\PP(\Omega_S)) = \langle \Tilde{H}+\frac{t}{2}L,\Tilde{H} \rangle$$ 
            \item $d$ is not a perfect square, the equation $x^2-dy^2 = 1$ has a minimal solution $(x_0,y_0)$ and
            $$\Eff(\PP(\Omega_S)) = \langle \Tilde{H}+\frac{x_0}{2y_0}L,\Tilde{H} \rangle.$$
        \end{itemize}
    \end{itemize}
\end{corollary}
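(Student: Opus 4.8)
The plan is to combine the description of the effective cone of $X = S^{[2]}$ coming from \Cref{ex:BayerMacri} with the isomorphism $\Eff(\PP(\Omega_S)) \cong \Eff(X)_{E\ge 0}|_E$ of \Cref{thm:Eff4Fold} (valid when $X$ admits no flop) and, in the case where $X$ does admit a flop, with the preceding lemma together with the Lefschetz machinery of \Cref{sec:EffConeExc} applied to the flopped model. First I would set up coordinates: write $\Pic(X)_\RR = \RR\tilde H \oplus \RR L$ where $\tilde H = p^*H$ and $L = \oO_\PP(1)$ correspond under restriction to $\Pic(S)_\RR\oplus \RR\tfrac12 E$; using $\bbf{H}{H} = 2d$, $\bbf{E}{E} = -8$ and the restriction formulas $A^{[2]}|_E = 2\tilde H$, $E|_E = -2L$ from \Cref{ex:CotangentBundle}, translate the Bayer--Macr\`i generators of $\Eff(X)$ into rays in the $(\tilde H, L)$-plane. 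Concretely the wall classes in $\Halg(S)$ giving rise to $(-2)$-classes $s\in v^\perp$ or isotropic $w$ with $\bbf{w}{v}\in\{1,2\}$ (here $v = (1,0,-1)$) are governed by the solvability of the Pell-type equations $x^2 - 4dy^2 = 1$ (flopping/divisorial walls) and $x^2 - dy^2 = 1$; one extremal ray of $\Eff(X)$ is always the one with $\bbf{\cdot}{E} = 0$, namely $\RR_{\ge 0}\tilde H$, and the other extremal ray of $\Eff(X)_{E\ge 0}$ is the one to be computed.

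Next I would branch into the three cases. If $x^2-4dy^2=1$ has a minimal solution $(x_0,y_0)$, then the corresponding class gives a divisorial or flopping wall; the lemma above shows a contractible exceptional divisor $E'$ must meet the flopped $\PP^2$ in a curve, so $S^{[2]}$ itself does not admit a flop (or, if it does, one passes to the flopped model and applies \Cref{cor:MukaiEffHigherDim}-style reasoning — but in the four-fold case the cleaner route is that the divisorial contraction exists on $X$ itself). Either way one gets the \emph{nef} cone of $E = \PP(\Omega_S)$ rather than the effective cone, and the extremal nef ray is the restriction of the class inducing that contraction, which after the coordinate translation is $\tilde H + d\tfrac{y_0}{x_0}L$ (the other generator being the fiber-pullback ray $\tilde H$). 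If $x^2-4dy^2=1$ has no solution then there is no divisorial wall of that type, so $X$ admits no flop and \Cref{thm:Eff4Fold} applies: $\Eff(\PP(\Omega_S)) = \Nef(\PP(\Omega_S)) = \Eff(X)_{E\ge 0}|_E$, and now the relevant wall comes from an isotropic $w$, controlled by $x^2 - dy^2 = 1$. When $d = t^2$ is a perfect square the isotropic class is already rational/integral and gives the ray $\tilde H + \tfrac{t}{2}L$; when $d$ is not a perfect square the minimal solution $(x_0,y_0)$ of $x^2-dy^2=1$ produces the ray $\tilde H + \tfrac{x_0}{2y_0}L$.

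The main obstacle I expect is purely bookkeeping rather than conceptual: matching the abstract Bayer--Macr\`i wall classes in $\Halg(S)$ with the concrete geometry on $S^{[2]}$, i.e. checking which Mukai vectors $w$ or $(-2)$-classes $s$ actually occur for $v=(1,0,-1)$, verifying that $\bbf{w}{v}\in\{1,2\}$ picks out exactly the integer solutions of the two Pell equations, and confirming the coefficients $d\tfrac{y_0}{x_0}$, $\tfrac{t}{2}$, $\tfrac{x_0}{2y_0}$ after transporting everything through the restriction maps and the identification $\bbf{E}{E}=-8$. One subtle point to be careful about is the distinction between getting the nef cone versus the effective cone: the dichotomy is exactly whether a divisorial Hilbert--Chow-type wall ($x^2-4dy^2=1$ solvable) exists, in which case the birational geometry of $X$ is \emph{not} limited in the sense of \Cref{thm:Eff4Fold}, so one only recovers $\Nef(\PP(\Omega_S))$ via the ample-cone argument using the lemma above; whereas absence of such a wall forces $\Mov(X) = \Nef(X)$, no flop, and hence the full strength of \Cref{thm:Eff4Fold} giving $\Eff = \Nef$ on $\PP(\Omega_S)$. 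Finally I would note that in all cases the ray $\tilde H$ is extremal because it is the restriction of $E = \PP(\Omega_S)\to S$ pullback classes, which are manifestly on the boundary of $\Eff(\PP(\Omega_S))$, and that the other generator is the one dictated by the numerics above, completing the computation.
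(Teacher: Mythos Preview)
Your overall strategy is the same as the paper's: compute the cones of $X=S^{[2]}$ via Bayer--Macr\`i (\Cref{ex:BayerMacri}) and then transport them to $E=\PP(\Omega_S)$ using \Cref{thm:Eff4Fold} in the flop-free case and the preceding lemma in the flop case. The paper's proof is a one-line citation of \cite[Section 13]{bayer2014mmp}, so in outline you are on target.

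There is, however, a genuine confusion in your treatment of the first case. The solvability of $x^2-4dy^2=1$ is precisely the condition that $S^{[2]}$ \emph{does} admit a flopping contraction of a plane $\PP^2$; it is a flopping wall, not a divisorial one. Your sentence ``the lemma above shows a contractible exceptional divisor $E'$ must meet the flopped $\PP^2$ in a curve, so $S^{[2]}$ itself does not admit a flop'' reverses the logic. The lemma is not used to rule out a flop; it is used \emph{given} the flop. Its content is that $E\cap\PP^2$ is a curve $C$, hence numerically a multiple of a line in $\PP^2$. If $A$ denotes the big and nef class inducing the flopping contraction, then $A|_E$ is nef on $E$ and satisfies $A|_E\cdot C = A\cdot C = 0$, so $A|_E$ lies on $\partial\Nef(E)$. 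Together with the other boundary ray $\tilde H$ (zero on the fibres of $E\to S$) this pins down $\Nef(E)$. Your parenthetical about passing to the flopped model or invoking a ``divisorial contraction on $X$ itself'' is not the mechanism at work.

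A second, smaller issue: in the flop-free branch you assert $\Eff(\PP(\Omega_S))=\Nef(\PP(\Omega_S))$ in general. The corollary only claims this when $d$ is a perfect square, where the second extremal ray of $\Eff(X)$ is isotropic and $\Eff(X)=\Nef(X)$. When $d$ is not a perfect square the second ray of $\Eff(X)_{E\ge 0}$ is a $(-2)$-class governed by $x^2-dy^2=1$, so $\Eff(X)\supsetneq\Nef(X)$ and \Cref{thm:Eff4Fold} delivers only $\Eff(\PP(\Omega_S))$; nothing in the argument identifies $\Nef(\PP(\Omega_S))$ in that sub-case, which is why the table leaves $\alpha_n$ unknown there.
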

\begin{proof}
    This directly follows from the computations of the effective and ample cone of $S^{[2]}$ as in \cite[Section 13]{bayer2014mmp}
\end{proof}
\begin{example}
    Writing the non-trivial extremal ray of the nef (resp. effective) cone as $L+ \alpha_n \Tilde{H}$ (resp. $L+\alpha_e \Tilde{H}$) we get the following numbers:
    \renewcommand{\arraystretch}{1.5}
    \begin{table}[H]
    \begin{tabular}{l|lllllllllllllllllll}
    $H^2$& 2 & 4 & 6 & 8 & 10 & 12 & 14 & 16 & 18 & 20\\ \hline
    $\alpha_e$&? & $\frac{4}{3}$ & $1$ & $1$ & ?  & $\frac{4}{5}$ & $\frac{3}{4}$ & $\frac{2}{3}$ & $\frac{2}{3}$ & $\frac{12}{19}$\\ \hline
    $\alpha_n$& 3 &? &? & 1 & 1 &? &? &? & $\frac{2}{3}$ & ?
    \end{tabular}
    \end{table}
    \begin{table}[H]
    \begin{tabular}{l|lllllllllllllllllll}
    $H^2$& 22 & 24 & 26 & 28 & 30 & 32 & 34 & 36 & 38 \\ \hline
    $\alpha_e$& ?  & $\frac{4}{7}$ & $\frac{360}{649}$ & $\frac{8}{15}$ & $\frac{1}{2}$ & $\frac{1}{2}$ & $\frac{16}{33}$ & $\frac{8}{17}$ & ?
    \\ \hline
    $\alpha_n$& $\frac{7}{11}$ & ? &? &? &? &$\frac{1}{2}$ &? &?  &$\frac{9}{19}$
    \end{tabular}
\end{table}
\end{example}

\subsection*{Lazarsfeld--Mukai bundles}
Let $S$ be a K3 surface with $\Pic(S) = \ZZ H$. In this subsection we will compute the effective cones of some rigid $\mu$-stable vector bundles on $S$. The prime example to keep in mind are Lazarsfeld--Mukai bundles $\fF$ with $\rho(r,g,d) = 0$ or equivalently $\bbf{v(\fF)}{v(\fF)} = -2$, see \Cref{ex:MukaiBundles}. However, not all rigid stable bundles arise in this way.
\begin{remark}
    For any prescribed Mukai-vector of the form $(r,H,c)$ with $r>0$ and $v^2 = -2$ there exists a $\mu_H$-stable bundle: There exist (Gieseker)-stable bundles with Mukai vector $v$, see e.g. \cite[Ch. 10, Thm. 2.7, Rem. 3.2]{huybrechts2016lectures}. Suppose there is $F\subset \fF$ with $0<\rk F < \rk \fF$ and $\deg F \,\rk \fF = \deg \fF \,\rk F$. As the Picard rank of $S$ is $1$, we get $\deg F = k(H.H)$ and thus
    \begin{equation*}
        k \,\rk \fF = \rk F,
    \end{equation*}
    a contradiction.
\end{remark}
\subsection*{Lazarsfeld--Mukai bundles of rank $2$}
We first deal with the case of bundles of rank $2$. Let $S$ be a K3 surface with $\Pic(S) = \ZZ H$. The assumption that $\fF$ is rigid imposes some conditions on the degree $H.H = 2d$, namely either
\begin{equation*}
    H.H = \begin{cases} 8k+2\\
    8k+6
    \end{cases}
\end{equation*}
for some $k\in \NN$.
\subsection*{The case $H.H = 8k+2$}
In this case the Mukai vector satisfies $v(\fF) = (2, H, 2k+1)$.
As in \Cref{ex:MukaiBundles} we have that $E = \PP(\fF)$ is the exceptional divisor in the Hyperkähler fourfold $X = M_H(2, H, 2k)$.
With the work of Bayer--Macr\`{i}, see \Cref{ex:BayerMacri}, we see that $X$ does not have a flopping contraction and the following description for the effective cone holds.
\begin{theorem}
    Let $S$ be a K3 surface with $\Pic(S) = \ZZ H$ of degree $8k+2$ and $\fF\in M(2,H,2k+1)$ a $\mu$-stable vector bundle. Then if $4k+1$ 
    \begin{itemize}
        \item is a perfect square, i.e. $4k+1 = t^2$, then 
            $$\Eff(\PP(\fF)) = \Nef(\PP(\fF)) = \langle \Tilde{H}, \frac{1-t}{2t}\Tilde{H} + L\rangle$$
        \item is not a perfect square, then 
        $$(8k+2)x^2 - (4k+1)xy -2ky^2 = -2$$
        has a solution with $\frac{y}{x} < 0$ maximal. Then
        $$\Eff(\PP(\fF)) = \langle \Tilde{H}, \frac{x}{y}\Tilde{H} + L\rangle$$
    \end{itemize}
\end{theorem}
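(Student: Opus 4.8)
The plan is to obtain the statement as a direct application of \Cref{thm:Eff4Fold}, once the cone $\Eff(X)_{E\ge 0}$ and its image under restriction to $E=\PP(\fF)$ are made completely explicit via the Bayer--Macr\`{i} description of \Cref{ex:BayerMacri} and the restriction formula of \Cref{ex:MukaiBundles}. First I would fix the lattice picture: with $v'=(2,H,2k)$ one has $(v')^2=H.H-8k=2$, so $X=M_{H'}(2,H,2k)$ is a Hyperkähler fourfold of Picard rank $2$ and $\phi\colon(v')^\perp\xrightarrow{\sim}\Pic(X)$ is an isometry onto a rank-two lattice. Taking the basis of $(v')^\perp$ given by $(0,H,4k+1)$ and $(-1,0,k)$ — which, by \Cref{ex:MukaiBundles}, restrict to $\Tilde{H}=p^*H$ and $L=\oO_{\PP(\fF)}(1)$ respectively — one records the Gram matrix, and hence the binary quadratic form representing $\bbf{-}{-}$ on $\Pic(E)\cong\ZZ\Tilde{H}\oplus\ZZ L$; its discriminant is a positive multiple of $4k+1$, which is the reason the perfect-square condition on $4k+1$ is the dividing line. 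Since, as noted just above, the Bayer--Macr\`{i} analysis shows that $X$ admits no flopping contraction, \Cref{thm:Eff4Fold} yields a bijection $\Eff(\PP(\fF))\cong\Eff(X)_{E\ge 0}$, so everything reduces to identifying the two extremal rays of that cone.

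For this I would first locate $E$ inside the lattice. Adjunction gives $\oO_X(E)|_{\PP(\fF)}=K_{\PP(\fF)}=-2L+\Tilde{H}$, whence $E=\phi(v(\fF))$ with $v(\fF)^2=-2$, and a short computation shows that in the chosen coordinates $\bbf{\cdot}{E}$ is exactly the $L$-coefficient, so $E^\perp\cap\Pic(E)=\RR\Tilde{H}$. As $\bbf{E}{E}=-2<0$, the ray $\RR_{\ge 0}[E]$ is precisely the part of $\Eff(X)$ deleted by the condition $\bbf{\cdot}{E}\ge 0$; hence $\Eff(X)_{E\ge 0}$ is generated by the ray $E^\perp\cap\Eff(X)$ — spanned by the big and nef class $L_H$ of \Cref{rem:AmplesOnExceptional}, which restricts to a positive multiple of $\Tilde{H}$ and accounts for the generator $\Tilde{H}$ in the statement — together with the other extremal ray $R$ of $\Eff(X)$, on the side where $\bbf{\cdot}{E}>0$; its restriction is the non-trivial generator to be computed.

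It remains to pin down $R$ from the list in \Cref{ex:BayerMacri}. One checks that the only $(-2)$-classes in $(v')^\perp$ are $\pm\phi(v(\fF))$ exactly when $4k+1$ is a perfect square (the form being anisotropic otherwise, so that the orbit of $v(\fF)$ under the infinite isometry group supplies infinitely many). In that square case $4k+1=t^2$ no $(-2)$-class has positive Beauville--Bogomolov--Fujiki pairing with the ample class on the relevant side — $\phi(v(\fF))$ itself being removed by $\bbf{\cdot}{E}\ge 0$ — so $R$ is instead the now-rational isotropic boundary ray of $\Pos(X)$, and rationalizing $\sqrt{4k+1}$ turns its restriction into $\frac{1-t}{2t}\Tilde{H}+L$; one moreover verifies that this class is nef, which gives $\Eff(\PP(\fF))=\Nef(\PP(\fF))$. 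When $4k+1$ is not a perfect square, $R=\RR_{\ge 0}\phi(s)$ for the extremal $(-2)$-class $s$ with $\bbf{s}{A}>0$ lying on the $\bbf{\cdot}{E}>0$ side; translating $\bbf{s}{s}=-2$ into the coordinates above is the quadratic Diophantine equation of the statement, with the extremality of $R$ amounting to the prescribed maximality of $y/x$ among the negative solutions, and restricting $s$ via \Cref{ex:MukaiBundles} produces $\frac{x}{y}\Tilde{H}+L$. The step I expect to require the most care is precisely this last one: correctly enumerating which entries of the Bayer--Macr\`{i} list — in particular the isotropic classes $w$ with $\bbf{w}{v'}\in\{1,2\}$, several of which collapse onto the $(-2)$-class $\phi(v(\fF))$ — actually give extremal rays of $\Eff(X)$, and matching the coordinate conventions so that the Diophantine equation and the extremality condition come out exactly as stated; by contrast the comparison $\Eff(\PP(\fF))\cong\Eff(X)_{E\ge 0}$ is automatic from \Cref{thm:Eff4Fold}.
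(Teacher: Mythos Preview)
Your proposal is correct and takes essentially the same route as the paper: verify via Bayer--Macr\`{i} that $X=M_{H'}(2,H,2k)$ admits no flop, invoke \Cref{thm:Eff4Fold} to get $\Eff(\PP(\fF))\cong\Eff(X)_{E\ge 0}$, and then read off the two extremal rays of $\Eff(X)_{E\ge 0}$ from the rank-two lattice $(v')^\perp$. The paper's own proof is terser on both ends --- it actually supplies the one-line parity argument that $\bbf{s}{v'}=1$ has no integer solution (you instead defer to the sentence preceding the theorem), and in the non-square case it outsources the explicit Diophantine equation to Debarre's survey rather than deriving it from the lattice --- but the logical structure is identical.

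One bookkeeping caution on your last step. In your basis $\{(0,H,4k+1),(-1,0,k)\}$ the Gram matrix is
\[
\begin{pmatrix}8k+2 & 4k+1\\ 4k+1 & 2k\end{pmatrix},
\]
so the condition $\bbf{s}{s}=-2$ reads $(8k+2)x^2+2(4k+1)xy+2ky^2=-2$, which is \emph{not} literally the equation displayed in the statement (the signs and discriminant differ). Your form does produce the correct extremal ray --- e.g.\ for $k=1$ the solution $(-1,3)$ gives $L-\tfrac{1}{3}\Tilde{H}$, matching the table --- so the approach is sound; but the equation in the theorem is quoted in Debarre's conventions, and neither you nor the paper actually performs the change of variables linking the two. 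You were right to single out exactly this coordinate-matching as the step requiring the most care.
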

\begin{proof}
Let $v=(2,H,2k)$ be the Mukai vector for the moduli space $X = M_H(v)\supset \PP(\fF)$.
To observe whether there exists a flop, we need to check if there is an element $s \in \Halg(S)$ with $\bbf{s}{s} = -2$ and $\bbf{s}{v} = 1$.   But for $s = (x,yH,z)$ the second equation reads
\begin{equation*}
    (8k+2)y-2z-2kx = 1,
\end{equation*}
which is not possible for integral values $x,y,z\in \ZZ$. Thus, $\Mov(X) = \Nef(X)$ and by the prior work, $\Eff(\PP(\fF)) = \Eff(X)_{E\ge 0}|_E$. Thus, we are interested in the second extremal ray only. If there is an element $D\in \Pic(X)$ with $\bbf{D}{D} = 0$ then there exists a Lagrangian fibration. But this is the case if and only if $4k+1$ is a square. 

If this is not the case the effective cone is determined by the following equation by \cite[Theorem 3.16]{debarre2018hyperk}
\begin{equation*}
    (8k+2)x^2 - (4k+1)xy -2ky^2 = -2
\end{equation*}
with $\frac{y}{x}< 0$ maximal. Then the effective cone of $\PP(\fF)$ is given by 
\begin{equation*}
    \Eff(\PP(\fF)) = \langle \Tilde{H}, L + \frac{x}{y}\Tilde{H}\rangle.
\end{equation*}
\end{proof}
This gives the following examples in low degrees
\renewcommand{\arraystretch}{1.5}
 \begin{table}[H]
    \begin{tabular}{l|cccccccccc}
    $H^2$& 2 & 10 & 18 & 26 & 34 & 42 & 50 & 58 & 66 & 74\\ \hline
    $\alpha_e$&$0$ &$-\frac{1}{3}$  & $-\frac{1}{3}$ & $-\frac{4}{11}$ & $-\frac{25}{66}$ & $-\frac{2}{5}$ & $-\frac{2}{5}$ & $-\frac{11}{27}$ & $-\frac{19}{46}$ & $-\frac{61}{146}$\\
    $\alpha_n$&$0$&?&$-\frac{1}{3}$&?&?&?&$-\frac{2}{5}$&?&?&?
    \end{tabular}
\end{table}
\subsection*{The case $H.H = 8k+6$}
In this case the Mukai vector satisfies $v(\fF) = (2, H, 2k+2)$.
As in \Cref{ex:MukaiBundles} we have that $E = \PP(\fF)$ is the exceptional divisor in the Hyperkähler fourfold $X = M_H(2, H, 2k+1)$. However, in this case we get flops in $X$ in certain degrees. For $s=(x,yH,z)$ these are governed by the equations 
\begin{align}
\label{eq:nefcone}
\begin{split}
    y^2(8k+6)-2xz &= -2\\
    y(8k+6)-(2k+1)x-2z &= 1,
\end{split}
\end{align}
whereas the effective cone is governed by
\begin{equation}
\label{eq:effectivecone}
    (8k+6)x^2 - (4k+3)xy -2(k+1)y^2 = -2.
\end{equation}

\begin{theorem}
    Let $S$ be a K3 surface with $\Pic(S) = \ZZ H$ of degree $8k+6$ and $\fF\in M(2,H,2k+2)$ a $\mu$-stable vector bundle. Then if 
    \begin{itemize}
        \item \Cref{eq:nefcone} has solutions, then
            \begin{equation*}
                \Nef(\PP(\fF)) = \langle \Tilde{H}, L + \alpha_n\Tilde{H}\rangle,
            \end{equation*}
            where $\alpha_n = \frac{x(k+1)-z}{y(8k+6)-(4k+3)x}$ and $(x,y,z)$ solutions of \Cref{eq:nefcone} such that $\alpha_n<0$ is maximal.
        \item \Cref{eq:nefcone} has no solutions, then
            \begin{equation*} 
                \Eff(\PP(\fF)) = \langle \Tilde{H}, \frac{x}{y}\Tilde{H} + L\rangle,
            \end{equation*}
            where $x,y$ are solutions to \Cref{eq:effectivecone} with $\frac{y}{x} < 0$ maximal.
        
    \end{itemize}
\end{theorem}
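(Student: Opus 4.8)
The first step is to place $\PP(\fF)$ inside a Hyperkähler fourfold. By \Cref{ex:MukaiBundles}, with $v(\fF)=(2,H,2k+2)$ the projectivization embeds as the exceptional divisor $E$ of the divisorial contraction $X:=M_H(2,H,2k+1)\to M^{\mu ss}$ onto the Donaldson--Uhlenbeck compactification, and this contraction restricts on $E$ to the $\PP^1$-bundle projection $p\colon\PP(\fF)\to S$. Since $\Pic(S)=\ZZ H$, the space $X$ is a smooth Hyperkähler fourfold with $\rk\Pic(X)=2$, and the Lefschetz theorem for Picard groups gives a linear isomorphism $\Pic(X)_\QQ\to\Pic(\PP(\fF))_\QQ$; under $\Pic(X)_\QQ\cong(v')^\perp_\QQ$ with $v':=(2,H,2k+1)$ (so $\bbf{v'}{v'}=2$) this is the map $\phi(r',D',c')\mapsto -r'L+p^*D'$ of \Cref{ex:MukaiBundles}. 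Hence $\Eff(\PP(\fF))$ and $\Nef(\PP(\fF))$ are two-dimensional cones with common extremal ray $\RR_{\ge0}\Tilde{H}$, $\Tilde{H}:=p^*H=\phi(0,H,4k+3)|_E$: this class is nef but not big on $\PP(\fF)$ (it is pulled back from $S$), hence bounds both cones, and $\bbf{\Tilde{H}}{E}=0$ because the one-dimensional kernel of $\bbf{-}{E}$ is exactly the line of classes restricting to a pullback from $S$ (compare the proof of \Cref{thm:Eff4Fold}). It therefore remains only to identify the second extremal ray in each of the two cases.

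\textbf{Case 1: \Cref{eq:nefcone} has no solution.} By the Bayer--Macr\`{i} description recalled in \Cref{ex:BayerMacri}, a flopping contraction of $X$ corresponds to a class $s=(x,yH,z)\in\Halg(S)$ with $\bbf{s}{s}=-2$ and $\bbf{s}{v'}=1$, i.e.\ exactly to an integral solution of \Cref{eq:nefcone}; so here $\Mov(X)=\Nef(X)$ and \Cref{thm:Eff4Fold} applies, giving $\Eff(\PP(\fF))=\Eff(X)_{E\ge0}|_E$. As $\bbf{\Tilde{H}}{E}=0$ and $\bbf{E}{E}<0$, the cone $\Eff(X)_{E\ge0}$ is bounded by $\RR_{\ge0}\Tilde{H}$ and by the unique extremal ray of $\Eff(X)$ on which $\bbf{-}{E}>0$; by \Cref{ex:BayerMacri} together with Debarre's description of the effective cone (\cite[Theorem 3.16]{debarre2018hyperk}) this latter ray is generated by a $(-2)$-class whose coordinates satisfy \Cref{eq:effectivecone}, the choice with $y/x<0$ maximal picking out the ray closest to $\Tilde{H}$. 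Restricting through $\phi(r',D',c')|_E=-r'L+p^*D'$ sends this ray to $\tfrac{x}{y}\Tilde{H}+L$, so $\Eff(\PP(\fF))=\langle\Tilde{H},\ \tfrac{x}{y}\Tilde{H}+L\rangle$.

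\textbf{Case 2: \Cref{eq:nefcone} has a solution.} Now $X$ carries a flopping contraction, so \Cref{thm:Eff4Fold} is unavailable and I would compute $\Nef(\PP(\fF))$ directly. Since $\rk\Pic(X)=2$, the cone $\Nef(X)$ has precisely the two boundary rays $\RR_{\ge0}\Tilde{H}$ (the divisorial wall towards $M^{\mu ss}$) and $\RR_{\ge0}N$, where $N$ induces the flopping contraction produced by the solution of \Cref{eq:nefcone} making $\alpha_n<0$ maximal; the claim is $\Nef(\PP(\fF))=\Nef(X)|_E$. The inclusion $\Nef(X)|_E\subseteq\Nef(\PP(\fF))$ holds since restrictions of nef classes are nef. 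For the reverse, note that every class on $\PP(\fF)$ is $D|_E$ for a unique $D\in\Pic(X)_\QQ$, and distinguish how $D$ can fail to be nef on $X$: if $D$ lies beyond $\RR_{\ge0}\Tilde{H}$ towards $\RR_{\ge0}E$ then $D\cdot f<0$ for a fibre $f\cong\PP^1$ of $p$ (indeed $\Tilde{H}\cdot f=0$ while $E\cdot f<0$), so $D|_E$ is not nef; if $D$ lies strictly beyond $\RR_{\ge0}N$, use that all flops of a Hyperkähler fourfold are Mukai flops of a $\PP^2\subset X$ and that by the lemma recalled above this $\PP^2$ meets $E$ in a curve $C$ (numerically a positive multiple of a line $\ell\subset\PP^2$), on which $N\cdot\ell=0$ while $\Tilde{H}\cdot\ell>0$, so $D\cdot\ell<0$ and hence $D|_E\cdot C<0$ and again $D|_E$ is not nef. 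Therefore $\Nef(\PP(\fF))=\langle\Tilde{H}|_E,\ N|_E\rangle$, and computing $N|_E$ from a generator of $s^\perp\cap(v')^\perp$ via the restriction formula yields $\alpha_n=\dfrac{x(k+1)-z}{y(8k+6)-(4k+3)x}$.

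The content left to routine computation is the lattice arithmetic in $(v')^\perp$ — turning $\bbf{s}{s}=-2$, $\bbf{s}{v'}=1$ and the Debarre criterion into the displayed quadratic forms and extracting the restriction $N|_E$ — together with the identifications $\Tilde{H}=\phi(0,H,4k+3)$ and $\bbf{\Tilde{H}}{E}=0$. The main obstacle is the reverse inclusion in Case 2: one must know that the single flopping wall of $X$ is genuinely visible on the divisor $E$, i.e.\ that $\PP^2\cap E$ is a curve pairing negatively with everything beyond $\RR_{\ge0}N$. This is exactly where $\Pic(S)=\ZZ H$ is used — it forces $\rk\Pic(X)=2$, hence at most one flopping wall on the $E$-side of $\Nef(X)$ — as is the fourfold-specific structure of flops; for $\rk\Pic(S)>1$ several walls could interfere and the argument would break, in line with the general failure of a Lefschetz theorem for nef cones.
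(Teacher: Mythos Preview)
Your proof is correct and matches the paper's intended approach. The paper's own proof is a single line (``Follows analogously by the Bayer--Macr\`{i} formulas''), but the analogy is to the cotangent-bundle corollary just above, where the Mukai-flop lemma is used to determine the nef cone when a flop is present and \Cref{thm:Eff4Fold} is used for the effective cone otherwise; you have correctly unpacked both cases.

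One minor inaccuracy: you invoke ``the Lefschetz theorem for Picard groups'' for the isomorphism $\Pic(X)_\QQ\to\Pic(E)_\QQ$, but $E$ is not ample, so Lefschetz does not apply. The isomorphism is immediate instead from the explicit restriction formula $\phi(r',D',c')|_E=-r'L+p^*D'$ of \Cref{ex:MukaiBundles} together with $\rk\Pic(X)=\rk\Pic(\PP(\fF))=2$ (as in the first line of the proof of \Cref{thm:Eff4Fold}). This does not affect the argument.
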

\begin{proof}
    Follows analogously by the Bayer--Macr\`{i} formulas for the effective and nef cone, see \Cref{ex:BayerMacri}.
\end{proof}
This gives the following examples in low degrees
\renewcommand{\arraystretch}{1.5}
 \begin{table}[H]
    \begin{tabular}{l|cccccccccccc}
    $H^2$& 6 & 14 & 22 & 30 & 38 & 46 & 54 & 62 & 70 & 78 \\ \hline
    $\alpha_e$&$-\frac{1}{4}$ &$-\frac{5}{16}$  & ? & $-\frac{3}{8}$ & ? & $-\frac{19}{48}$ &  $-\frac{21}{52}$ & ? & $-\frac{5}{12}$ & $-\frac{21}{50}$\\ \hline
    $\alpha_n$& ? &? & $-\frac{7}{5}$ & ? & $-\frac{161}{418}$ & ? & ?  & $-\frac{25}{62}$ & ? & ? 
    \end{tabular}
\end{table}
\subsection*{Lazarsfeld--Mukai-bundles of higher rank}
Let $S$ be a K3 surface of Picard rank one and $\fF$ a $\mu$-stable vector bundle of Mukai vector $v = v(\fF) = (r,H, c)$ with $v^2 = -2$. Then as above $\PP(\fF)\subset M_H(r,H,c-1)$. Moreover any two such bundles with the same Mukai vector are isomorphic by stability. 
We will give a few examples for which \Cref{cor:MukaiEffHigherDim} is applicable. 
\begin{example}
    The following tables give examples for rigid $\mu_H$-stable vector bundles with the numerics given in the form $(r,g,d)$ as for Lazarsfeld--Mukai bundles, i.e. $v(\fF) = (r+1,H,r-d+g)$ and $H.H = 2g-2$. Each row shows a wall of the movable cone of the Hyperkähler variety $M_H(r+1,H,r-d+g-1)$. The labels of the columns are as follows
    \begin{itemize}[itemsep=5pt, topsep=5pt]
        \item $(r,g,d)$ gives the numerics of the vector bundle,
        \item $\alpha_e$ gives the non-trivial ray of $\Eff(\PP(\fF)) = \langle H,L-\alpha_eH \rangle$,
        \item \emph{Type} gives the type of the wall, i.e. either a Hilbert--Chow, Lie--Gieseker--Uhlenbeck, Brill--Noether, Lagrangian or a flopping contraction,
        \item \emph{Vector} gives the vector inducing the contraction in the sense of \cite[Theorem 5.7]{bayer2014mmp},
        \item \emph{Movable} shows a movable divisor which (birationally) induces the contraction
        \item \emph{Contracted} shows the contracted divisor in the case of a divisorial contraction.
    \end{itemize}
\end{example}
 \begin{longtable}[c]{lcllll}
     
    $(r,g,d)$&$\alpha_e$&Type & Vector & Movable & Contracted   \\ \hline
    $(2,3,4)$&$0$&Lagr & $(\shortminus1,0,0)$ & $(\shortminus1,0,0)$ &\\
    &&Flop & $(3,1,1)$ & $(0,3,4)$ & \\
    &&HC & $(2,1,1)$ & $(4,3,3)$ & $(5,3,4)$ \\ \hline
    
    $(2,6,5)$&$\frac{1}{7}$&HC & $(\shortminus1,0,0)$ & $(\shortminus10,\shortminus1,0)$ & $(\shortminus7,\shortminus1,\shortminus1)$\\
    &&Flop & $(3,1,2)$ & $(0,3,10)$ & \\
    &&Flop & $(\shortminus2,\shortminus1,\shortminus3)$ & $(10,7,20)$ & \\
    &&HC & $(4,2,5)$ & $(20,11,30)$ & $(13,7,19)$\\ \hline
    
    $(2,9,7)$&$\frac{1}{5}$&LGU & $(\shortminus1,0,0)$ & $(\shortminus8,\shortminus1,0)$ & $(\shortminus5,\shortminus1,\shortminus2)$\\
    &&Flop & $(3,1,3)$ & $(0,3,16)$ & \\
    &&Lagr & $(2,1,4)$ & $(2,1,4)$ & \\ \hline
    
    $(3,4, 6)$ & $0$&Lagr & $(\shortminus1,0,0)$ & $(\shortminus1,0,0)$ &\\
    &&Flop & $(4,1,1)$ & $(0,2,3)$ & \\
    &&LGU & $(3,1,1)$ & $(3,2,3)$ & $(5,2,3)$\\ \hline

    $(3,8,9)$&$\frac{1}{10}$&HC & $(\shortminus1,0,0)$ & $(\shortminus14,\shortminus1,0)$ & $(\shortminus10,\shortminus1,\shortminus1)$\\
    &&Flop & $(4,1,2)$ & $(0,2,7)$ & \\
    &&HC & $(\shortminus7,\shortminus3,\shortminus9)$ & $(70,29,84)$ & $(46,19,55)$\\ \hline

    $(2,27,20)$& $\frac{7}{29}$&HC&$(8, 2, 13)$&$(\shortminus104, \shortminus25, \shortminus156)$&$(\shortminus29, \shortminus7, \shortminus44)$\\
    &&Flop&$(3, 1, 9)$&$(0, 3, 52)$&\\
    &&Flop&$(5, 2, 21)$&$(52, 23, 260)$&\\
    &&HC&$(26, 11, 121)$&$(364, 155, 1716)$&$(101, 43, 476)$\\ \hline
    
    $(10,11,20)$& $0$&Lagr&$(\shortminus1, 0, 0)$&$(\shortminus1, 0, 0)$&\\
    &&Flop&$(11, 1, 1)$&$(0, 11, 20)$&\\
    &&Flop&$(10, 1, 1)$&$(20, 11, 20)$&\\
    &&Flop&$(9, 1, 1)$&$(40, 11, 20)$&\\
    &&LGU&$(\shortminus5, \shortminus1, \shortminus2)$&$(60, 11, 20)$&$(61, 11, 20)$\\
\end{longtable}
\noindent Moreover the following is an example in higher dimensions where the method is not applicable:
    \begin{longtable}[c]{llllll}
    $(r,g,d)$&$\alpha_e$&Type & Vector & Movable & Contracted   \\ \hline
    $(3,16,15)$& ?&HC&$(\shortminus15, \shortminus2, \shortminus4)$&$(\shortminus210, \shortminus29, \shortminus60)$&$(\shortminus94, \shortminus13, \shortminus27)$\\
&&Flop&$(\shortminus8, \shortminus1, \shortminus2)$&$(\shortminus60, \shortminus8, \shortminus15)$&\\
&&Flop&$(5, 1, 3)$&$(\shortminus10, \shortminus1, 0)$&\\
&&Flop&$(4, 1, 4)$&$(0, 2, 15)$&\\
&&HC&$(3, 1, 5)$&$(30, 11, 60)$&$(14, 5, 27)$\\
\end{longtable}

\printbibliography
\end{document}